\date{}
\newtheorem{theo}{Theorem}[section]
\newtheorem{lemm}[theo]{Lemma}
\newtheorem{prop}[theo]{Proposition}
\newtheorem{cor}[theo]{Corollary}
\theoremstyle{definition}
\newtheorem{defi}[theo]{Definition}
\newtheorem{rem}[theo]{Remark}
\newcommand{\1}{\mbox{1}\hspace{-0.25em}\mbox{l}}
\providecommand{\keywords}[1]{\textbf{Keywords:} #1}
\def\widebar{\accentset{{\cc@style\underline{\mskip10mu}}}}
\numberwithin{equation}{section}
\def\rnum#1{\expandafter{\romannumeral #1}} 
\def\Rnum#1{\uppercase\expandafter{\romannumeral #1}}
\title{Time-inconsistent consumption-investment problems in incomplete markets under general discount functions}
\author{Yushi Hamaguchi\thanks{Department of Mathematics, Kyoto University, Kyoto 606--8502, Japan, \href{mailto:hamaguchi@math.kyoto-u.ac.jp}{hamaguchi@math.kyoto-u.ac.jp}}}
\begin{document}
\maketitle


\begin{abstract}
In this paper, we study a time-inconsistent consumption-investment problem with random endowments in a possibly incomplete market under general discount functions. We provide a necessary condition and a verification theorem for an open-loop equilibrium consumption-investment pair in terms of a coupled forward-backward stochastic differential equation. Moreover, we prove the uniqueness of the open-loop equilibrium pair by showing that the original time-inconsistent problem is equivalent to an associated time-consistent one.
\end{abstract}

\keywords{Time-inconsistency; open-loop equilibrium; incomplete market; forward-backward stochastic differential equation.}


\section{Introduction}

In this paper, we investigate a time-inconsistent consumption-investment problem in an incomplete market under general discount functions. In recent years, time-inconsistent control problems have received remarkable attentions in control theory, mathematical finance and economics. Time-inconsistency for a dynamic control problem means that the so-called Bellman's principle of optimality does not hold. In other words, a restriction of an optimal control for a specific initial pair on a later time interval might not be optimal for that corresponding initial pair. Such a situation occurs for example in dynamic mean-variance control problems and in utility maximization problems for consumption-investment strategies under non-exponential discounting. In this paper, we focus on the later problem. In classical consumption-investment problems under discounted utility, the discount function which represents the time-preference of an investor is assumed to be exponential. This assumption implies that the discount rate is constant over time and provides the possibility to compare outcomes occurring at different times by discounting future utility at a constant rate. The compatibility of discounted utility at different times leads to time-consistency of the problem, and hence we can use the classical dynamic programming approaches and the analytical tools of the Hamilton--Jacobi--Bellman (HJB) equations to obtain the optimal strategy together with the value function. However, results from experimental studies indicate that discount rates for near future are much lower than discount rates for the time further away in future, that contradict the assumption of exponential discounting; see e.g.\ Ainslie~\cite{a_Ainslie_75}. Therefore, it is important to investigate consumption-investment problems under non-exponential discounting. Unfortunately, in the case of non-exponential discounting, we cannot compare discounted utility at different times, and hence the problem becomes time-inconsistent. In order to handle that problem in a time-consisting way, we must introduce another concept of solutions instead of an optimal control.

In the literature of time-inconsistent control problems, several concepts of time-consistent solutions have been introduced and investigated. The main approaches to handle time-inconsistent control problems are to seek for, instead of optimal controls, time-consistent equilibrium controls, which are within a game theoretic framework. Bj\"{o}rk, Khapko and Murgoci~\cite{a_Bjork-_17} introduced an equilibrium strategy in a Markovian setting, and provided an extended HJB equation together with a verification theorem. Yong~\cite{a_Yong_12} considered a multi-players differential game framework with a hierarchical structure, and derived the so-called equilibrium HJB equation. The solution concepts considered in \cite{a_Bjork-_17,a_Yong_12} were closed-loop equilibrium strategies, which are equilibria for ``decision rules'' that a controller uses to select a ``control action'' based on each state. Mathematically, a closed-loop strategy is a mapping from states to control actions, which is chosen independently of initial conditions. The methods of \cite{a_Bjork-_17,a_Yong_12} to treat time-inconsistent control problems are extensions of the classical dynamic programming approaches. In contrast, Hu, Jin and Zhou~\cite{a_Hu-Jin-Zhou_12,a_Hu-Jin-Zhou_17} defined an open-loop equilibrium control, which is an equilibrium concept for a ``control process'' that a controller chooses based on the initial condition. In a time-inconsistent linear-quadratic stochastic control problem, they used a duality method in the spirit of the classical maximum principle, and characterized an open-loop equilibrium control by a ``flow'' of forward-backward stochastic differential equations (FBSDEs for short), which is a coupled system consisting of a single stochastic differential equation (SDE) and a continuum of backward SDEs (BSDEs) defined on different time intervals. The solvability of a flow of FBSDEs remains a challenging open problem except for some special cases; see the author's work~\cite{a_Hamaguchi_20} for small-time solvability of a flow of FBSDEs with Lipschitz continuous coefficients. In order to handle time-inconsistent consumption-investment problems in a continuous-time model, Ekeland and Pirvu~\cite{a_Ekeland-Pirvu_08} were the first to provide a precise definition of the equilibrium concept within a class of closed-loop strategies in a Markovian model. They characterized the equilibrium policy through the solution of a flow of BSDEs, and they showed, with a special form of the discount function, this flow of BSDEs has a solution. Zhao, Shen and Wei~\cite{a_Zhao-_14} studied a time-inconsistent consumption-investment problem in a non-Markovian model with the logarithmic utility function and a general (Lipschitz continuous) discount function. They adopted the multi-players differential game approach introduced by \cite{a_Yong_12}, and obtained a time-consistent strategy. Zhao, Wang and Wei~\cite{a_Zhao-_16} also adopted the multi-players differential game approach to investigate a time-inconsistent consumption-investment-reinsurance problem for an insurer with the exponential utility function and a general discount function (with some structural assumptions). Their model is non-Markov, while the interest rate is assumed to be deterministic. In contrast, within the class of open-loop controls, Alia et al.~\cite{a_Alia-_17} studied a time-inconsistent consumption-investment problem under a general utility function and a general (Lipschitz continuous) discount function. They derived, by using the same duality method as \cite{a_Hu-Jin-Zhou_12,a_Hu-Jin-Zhou_17}, a flow of FBSDEs which characterizes the open-loop equilibrium consumption-investment pair. They assumed that the market is complete and the interest rate is deterministic. We mention that their assumptions imposed on the utility function are too strong to apply to the exponential utility case; see Remark~\ref{remark: Alia} for some comments on their results. In fact, most previous researches assumed that the interest rate is deterministic, or that the utility function has a specific form or satisfies some strong assumptions.

The aim of this paper is to investigate an open-loop equilibrium strategy pair of a time-inconsistent consumption-investment problem under general utility functions and general discount functions. The novelties of this paper are as follows:
\begin{enumerate}
\renewcommand{\labelenumi}{(\roman{enumi})}
\item
The market is possibly incomplete, and the interest rate is allowed to be a stochastic process. Moreover, the investor is assumed to be endowed with a random income and a random terminal lump-sum payment.
\item
We provide a necessary condition (Theorem~\ref{theorem: necessary condition}) and a verification theorem (Theorem~\ref{theorem: verification}) for an open-loop equilibrium pair. Their conditions are related to the solvability of the corresponding fully coupled FBSDE, which is more tractable than a flow of FBSDEs appearing in \cite{a_Ekeland-Pirvu_08,a_Alia-_17}.
\item
By using the above results, we prove that finding an open-loop equilibrium pair of the time-inconsistent consumption-investment problem is equivalent to finding an optimal pair of a time-consistent one (Theorem~\ref{theorem: equivalent problems}).
\item
As a consequence, we obtain the uniqueness of an open-loop equilibrium pair satisfying suitable integrability conditions.
\end{enumerate}
It is worth to mention that only a few papers of time-inconsistent problems have studies the uniqueness of the equilibrium. To the best of our knowledge, in the closed-loop framework, there have not been any positive results on the uniqueness of equilibrium strategies. In a time-inconsistent linear-quadratic stochastic control problem, Hu--Jin--Zhou~\cite{a_Hu-Jin-Zhou_17} showed the uniqueness of the open-loop equilibrium control when the state is one dimensional and coefficients are deterministic. In contrast, we show the uniqueness of the open-loop equilibrium pair of a time-inconsistent consumption-investment problem with random coefficients. Let us remark on the result (\rnum{3}). Our result is quit different from the result in \cite{a_Bjork-_17} in which in order to formulate the equivalent time-consistent problem one needs to know the (closed-loop) equilibrium strategy. In contrast to the above paper, the equivalent time-consistent problem in Theorem~\ref{theorem: equivalent problems} does not depend on the underlying equilibrium pair. Thus, our result is practically important since it suffices to find an optimal pair of the standard time-consistent problem in order to obtain an open-loop equilibrium pair of the original time-inconsistent problem. Recently, Alia~\cite{a_Alia_19} studied a time-inconsistent control problem for a jump diffusion model under a general discount function, and constructed an equivalent time-consistent control problem. He firstly proved the equivalence of two problems, and then characterized an open-loop equilibrium control of the original time-inconsistent problem by solving the associated time-consistent one. However, since our model specified below does not satisfy these conditions, we cannot use his result directly. In contrast to the above mentioned paper, we firstly characterize an open-loop equilibrium pair, and then, as a consequence, we get the associated time-consistent problem. Our methods to prove the main theorems (Theorems~\ref{theorem: necessary condition},\,\ref{theorem: verification}) are inspired by Horst et al.~\cite{a_Horst-_14}. They studied a time-consistent utility maximization problem for the terminal wealth (without consumption) in an incomplete market with a general utility function, and characterized an optimal control by a fully coupled FBSDE, which is different from that appearing in the classical duality method. The key observation of their method is to derive the dynamics of the density process of an equivalent martingale measure under which the optimal wealth process becomes a true martingale. This observation is called the martingale optimality principle, which goes back to Hu, Imkeller and M\"{u}ller~\cite{a_Hu-Imkeller-Muller_05} who treated some particular utility functions, that is, exponential, logarithmic and power utility functions. See also Cheridito and Hu~\cite{a_cheridito-Hu_11} for the martingale optimality principle for (time-consistent) consumption-investment problems under non-convex constraints, where the above three types of utility functions and the exponential discount function were treated. In this paper, we consider general utility functions defined on the whole real line and general discount functions. Unfortunately, due to the time-inconsistency, the martingale optimality principle does not make sense in our problem. Our idea to tackle a time-inconsistent consumption-investment problem is, loosely speaking, to combine the technique of \cite{a_Horst-_14} and the duality method for time-inconsistent control problems. In such a way we obtain a characterization of an open-loop equilibrium pair by an FBSDE, which has more information about the structure of an open-loop equilibrium pair than the characterization by the duality method; see Remark~\ref{remark: maximum principle}.

The remainder of this paper is organized as follows: In Section~\ref{section: model}, we introduce our financial market model. In Section~\ref{section: necessary condition}, we provide a necessary condition for an open-loop equilibrium pair in terms of an FBSDE. As a converse result, in Section~\ref{section: verification theorem}, we derive a verification theorem, that is, we show that a solution of the FBSDE appearing in the necessary condition allows to construct an open-loop equilibrium pair. By using these results, in Section~\ref{section: equivalent problem}, we relate our time-inconsistent problem to a time-consistent one, and show the uniqueness of an open-loop equilibrium pair of the original time-inconsistent problem. Some proofs of technical lemmas are collected in Appendix~\ref{appendix}.


\section{The model}\label{section: model}

Let $T>0$ be a finite time horizon and let $W=(W_t)_{t\in[0,T]}$ be a $d$-dimensional Brownian motion on a complete probability space $(\Omega,\mathcal{F},\mathbb{P})$. $\mathbb{F}=(\mathcal{F}_t)_{t\in[0,T]}$ denotes the $\mathbb{P}$-augmentation of the filtration generated by $W$. Denote by $\mathbb{E}_t[\cdot]$ the conditional expectation given by $\mathcal{F}_t$ for each $t\in[0,T)$. $\1_A$ denotes the indicator function for a set $A$, and $\mathrm{Leb}$ denotes the Lebesgue measure on $\mathbb{R}$. For $t\in[0,T]$, $p,q\geq1$ and $\mathbb{H}=\mathbb{R},\mathbb{R}^d$, $L^\infty_{\mathcal{F}_t}(\Omega;\mathbb{H})$ denotes the set of all $\mathbb{H}$-valued $\mathcal{F}_t$-measurable random variables, $L^p_\mathbb{F}(\Omega;L^q(t,T;\mathbb{H}))$ denotes the set of all $\mathbb{H}$-valued predictable processes $X$ such that $\mathbb{E}[(\int^T_t|X_s|^q\,ds)^{p/q}]<\infty$, and $L^p_\mathbb{F}(\Omega;C([t,T];\mathbb{H}))$ denotes the set of all $\mathbb{H}$-valued adapted and continuous processes $X$ such that $\mathbb{E}[\sup_{s\in[t,T]}|X_s|^p]<\infty$. We also define $L^p_\mathbb{F}(t,T;\mathbb{H}):=L^p_\mathbb{F}(\Omega;L^p(t,T;\mathbb{H}))$.

We consider a financial market consisting of a riskless asset $S^0$ and $d$ risky assets $\tilde{S}^i,\ i=1,\dots,d$. The prices of these assets follow the dynamics
\begin{equation}\label{S^0 dynamics}
	\begin{cases}
		dS^0_t=r_tS^0_t\,dt,\ t\in[0,T],\\
		S^0_0=1,
	\end{cases}
\end{equation}
and
\begin{equation*}
	\begin{cases}
		d\tilde{S}^i_t=\tilde{S}^i_t(dW^i_t+b^i_t\,dt),\ t\in[0,T],\\
		\tilde{S}^i_0=\tilde{s}^i_0>0,
	\end{cases}
	i=1,\dots,d,
\end{equation*}
where $r$ and $b^i,\ i=1,\dots,d,$ are $\mathbb{R}$-valued predictable processes. We assume that the interest rate process $r$ and the excess rate of return vector process $\theta:=(b^1-r,\dots,b^d-r)^\top$ are bounded. It is well-known that, in this market model, arbitrage opportunities are excluded; see the textbook~\cite{b_Karatzas-Shreve_98}. Consider a small investor receiving an income $e_s$ at each intermediate time $s\in[0,T]$ and a lump-sum payment $E$ at time $T$, who can consume at intermediate times and invest in the financial market. Throughout this paper, we assume the following:
\begin{enumerate}
\renewcommand{\labelenumi}{(\roman{enumi})}
\item
$r$ is an $\mathbb{R}$-valued bounded predictable process, and $\theta$ is an $\mathbb{R}^d$-valued bounded predictable process.
\item
$e$ is an $\mathbb{R}$-valued predictable process such that $\int^T_0|e_s|\,ds<\infty$ a.s., and $E$ is an $\mathbb{R}$-valued $\mathcal{F}_T$-measurable random variable.
\end{enumerate}
Let $d_1\in\{1,\dots,d\}$ be fixed, and assume that the investor can invest in the riskless asset $S^0$ and the risky assets $\tilde{S}^1,\dots,\tilde{S}^{d_1}$, while the assets $\tilde{S}^{d_1+1},\dots,\tilde{S}^d$ cannot be invested into. Note that, if $d_1<d$ (resp.\,$d_1=d$), then the market is incomplete (resp.\,complete). Define $W^\mathcal{H}:=(W^1,\dots,W^{d_1},0,\dots,0)^\top$, $W^\mathcal{O}:=(0,\dots,0,W^{d_1+1},\dots,W^d)^\top$, $\theta^\mathcal{H}:=(\theta^1,\dots,\theta^{d_1},0,\dots,0)^\top$, and $S^\mathcal{H}:=\int^\cdot_0(dW^\mathcal{H}_s+\theta^\mathcal{H}_s\,ds)$. Here, the notation $\mathcal{H}$ refers to ``hedgeable'' and $\mathcal{O}$ to ``orthogonal''. (We borrowed these notations from \cite{a_Horst-_14}.) Hereafter, for each $x=(x^1,\dots,x^d)^\top\in\mathbb{R}^d$, we use the notations
\begin{equation*}
	x^\mathcal{H}=(x^1,\dots,x^{d_1},0,\dots,0)^\top\ \text{and}\ x^\mathcal{O}=(0,\dots,0,x^{d_1+1},\dots,x^d)^\top.
\end{equation*}
If the investor whose initial wealth at time $t\in[0,T)$ is $x_t\in\mathbb{R}$ consumes at a predictable rate $c$ and invests according to an $\mathbb{R}^d$-valued predictable trading strategy $\pi=(\pi^1,\dots,\pi^{d_1},0,\dots,0)^\top$, where $\pi^i_s$ is the amount of money invested in stock $i$ at time $s$, then his/her wealth $X^{(c,\pi)}=X^{(c,\pi,t,x_t)}$ evolves as
\begin{equation}\label{wealth SDE}
	X^{(c,\pi)}_s=x_t+\int^s_tr_uX^{(c,\pi)}_u\,du+\int^s_t\pi_u\cdot dS^\mathcal{H}_u+\int^s_t(e_u-c_u)\,du,\ s\in[t,T].
\end{equation}
If $\int^T_t(|c_s|+|\pi_s|^2)\,ds<\infty$ a.s., then SDE~(\ref{wealth SDE}) has a unique continuous solution $X^{(c,\pi)}$.

In this paper, we consider a utility maximization problem for a consumption-investment pair $(c,\pi)$ under general discount functions. In order to define the reward functional, we introduce a class $\mathbb{U}$ of utility functions and a class $\Lambda$ of discount functions. We say that a function $U:\mathbb{R}\to\mathbb{R}$ is in $\mathbb{U}$ if $U$ is three times differentiable, strictly increasing, strictly concave, and satisfies the Inada condition: $\lim_{x\to\infty}U'(x)=0$, $\lim_{x\to-\infty}U'(x)=\infty$. Also, with the notation $\Delta[0,T]:=\{(t,s)\,|\,0\leq t\leq s\leq T\}$, we say that a function $\lambda:\Delta[0,T]\to\mathbb{R}_+$ is in $\Lambda$ if $\lambda$ is continuous, strictly positive and satisfies $\lambda(t,t)=1$ for any $t\in[0,T]$. For example, the exponential utility function is in the class $\mathbb{U}$. Some possible examples of discount functions in the class $\Lambda$ are as follows:
\begin{itemize}
\item
Exponential discounting: $\lambda(t,s)=e^{-\delta(s-t)}$ with some constant $\delta\geq0$;
\item
Convex combination of two exponential discounting: $\lambda(t,s)=\alpha e^{-\delta(s-t)}+(1-\alpha)e^{-\gamma(s-t)}$ with some constants $\alpha\in(0,1)$ and $\delta,\,\gamma>0$ such that $\delta\neq\gamma$;
\item
Quasi-exponential discounting: $\lambda(t,s)=(1+\alpha(s-t))e^{-\delta(s-t)}$ with some constants $\alpha,\,\delta>0$;
\item
Hyperbolic discounting: $\lambda(t,s)=\frac{1}{1+\delta(s-t)}$ with some constant $\delta>0$.
\end{itemize}
In the above examples, $\lambda$ can be seen as a function of $s-t$. More generally, we can consider the following discount function:
\begin{itemize}
\item
Exponential discounting with reference-time-dependent discount rate: $\lambda(t,s)\\=e^{-\delta(t)(s-t)}$ with some nonnegative continuous function $\delta:[0,T]\to\mathbb{R}_+$.
\end{itemize}
Note that the discount function in the last example cannot be written as a function of $s-t$. Therefore, it is beyond the class of discount functions considered in \cite{a_Ekeland-Pirvu_08,a_Zhao-_14,a_Zhao-_16,a_Alia-_17}.

Suppose that $U_1,\,U_2\in\mathbb{U}$ and $\lambda_1,\,\lambda_2\in\Lambda$ are given. For each initial condition $(t,x_t)\in[0,T)\times\mathbb{R}$, we impose the following condition on a consumption-investment pair $(c,\pi)$:
\begin{itemize}
\item[(H0)$_{(t,x_t)}$]
$c$ is an $\mathbb{R}$-valued predictable process, $\pi$ is an $\mathbb{R}^d$-valued predictable process such that $\pi=(\pi^1,\dots,\pi^{d_1},0,\dots,0)^\top$, and it holds that
\begin{equation*}
	\int^T_t(|c_s|+|\pi_s|^2)\,ds<\infty\ \text{a.s.\ and}\ \mathbb{E}\left[\int^T_t|U_1(c_s)|\,ds+|U_2(X^{(c,\pi,t,x_t)}_T+E)|\right]<\infty.
\end{equation*}
\end{itemize}

We simply denote by (H0)$_x$ the condition (H0)$_{(0,x)}$ for each $x\in\mathbb{R}$.


\begin{rem}
The condition (H0)$_{(t,x_t)}$ depends on the utility functions $U_1,\,U_2\in\mathbb{U}$, while it does not depend on the discount functions $\lambda_1,\,\lambda_2\in\Lambda$. The same is true for (H1)$_{x,p}$ and (H2)$_{x,p}$ defined below.
\end{rem}

Denote by $\Pi^{(t,x_t)}_0$ (resp.\ $\Pi^x_0$) the set of pairs $(c,\pi)$ satisfying (H0)$_{(t,x_t)}$ (resp.\ (H0)$_x$). Suppose that the investor seeks for a pair $(c,\pi)\in\Pi^{(t,x_t)}_0$ that maximizes the reward functional
\begin{equation}\label{reward functional}
	R(c,\pi;t,x_t):=\mathbb{E}_t\left[\int^T_t\lambda_1(t,s)U_1(c_s)\,ds+\lambda_2(t,T)U_2(X^{(c,\pi)}_T+E)\right].
\end{equation}
When $\lambda_1,\,\lambda_2$ are exponential discount functions (with the same discount rate), it is well-known that the maximization problem for reward functional (\ref{reward functional}) is time-consistent. However, if we assume that $\lambda_1,\,\lambda_2$ are non-exponential discount functions, then it is time-inconsistent in general; see Yong~\cite{a_Yong_12}. Instead of finding a global optimal pair (which does not exist), we seek for an open-loop equilibrium pair $(c^*,\pi^*)$.
\par
To define an open-loop equilibrium pair, we introduce the set of perturbations; For each $t\in[0,T)$, define
\begin{equation*}
	\chi_t:=\{(\kappa,\eta)\,|\,\kappa\in L^\infty_{\mathcal{F}_t}(\Omega;\mathbb{R}),\ \eta=(\eta^1,\dots,\eta^{d_1},0,\dots,0)^\top\in L^\infty_{\mathcal{F}_t}(\Omega;\mathbb{R}^d)\}.
\end{equation*}


\begin{defi}
Let $U_1,\,U_2\in\mathbb{U}$ and $\lambda_1,\,\lambda_2\in\Lambda$ be given. For a given initial wealth $x\in\mathbb{R}$, we call $(c^*,\pi^*)\in\Pi^x_0$ an open-loop equilibrium pair if, for any $t\in[0,T)$ and $(\kappa,\eta)\in\chi_t$, it holds that
\begin{equation*}
	\limsup_{\epsilon\downarrow0}\frac{R(c^{t,\epsilon},\pi^{t,\epsilon};t,X^*_t)-R(c^*,\pi^*;t,X^*_t)}{\epsilon}\leq0\ \text{a.s.},
\end{equation*}
where $X^*=X^{(c^*,\pi^*,0,x)}$, and the pair $(c^{t,\epsilon},\pi^{t,\epsilon})=(c^{t,\epsilon,\kappa},\pi^{t,\epsilon,\eta})$ is defined by
\begin{equation*}
	\begin{cases}
		c^{t,\epsilon}_s:=c^*_s+\kappa\1_{[t,t+\epsilon)}(s),\\
		\pi^{t,\epsilon}_s:=\pi^*_s+\eta\1_{[t,t+\epsilon)}(s),
	\end{cases}
\end{equation*}
for $s\in[t,T]$.
\end{defi}


\begin{rem}
The above definition of an open-loop equilibrium pair is inspired by \cite{a_Hu-Jin-Zhou_12,a_Hu-Jin-Zhou_17}. An open-loop equilibrium pair is a time-consistent consumption-investment strategy pair satisfying a kind of local optimality condition. Note that we consider only bounded perturbations $(\kappa,\eta)\in\chi_t$ of a pair $(c^*,\pi^*)\in\Pi^x_0$.
\end{rem}


\begin{rem}\label{remark: Alia}
If the interest rate process $r$ is deterministic, $d_1=d$ (i.e.\,the market is complete), $(e,E)=(0,0)$ (i.e.\,without endowments), $U^{(3)}_1,\,U^{(3)}_2$ are bounded, and $\lambda_1,\,\lambda_2$ are of the forms $\lambda_1(t,s)=\lambda(s-t)$ and $\lambda_2(t,T)=\lambda(T-t)$ with some Lipschitz continuous function $\lambda$, then our model becomes (essentially) the same one as in Alia et al.~\cite{a_Alia-_17}. This paper goes beyond \cite{a_Alia-_17} for this reason. Essentially, some boundedness conditions of $U_1$ and $U_2$ are weakened in the present paper. This is important since our setting can be applied to the exponential utility function and the class of utility functions introduced by Fromm and Imkeller~\cite{a_Fromm-Imkeller_20}.
\end{rem}

The aim of this paper is to characterize an open-loop equilibrium pair by an FBSDE. We must treat the integrability conditions and the limit operations carefully to overcome the technical difficulties arising in the literature. To do so, let us introduce further conditions of consumption-investment pairs. Let $U_1,\,U_2\in\mathbb{U}$, $x\in\mathbb{R}$ and $p>1$ be given.
\begin{itemize}
\item[(H1)$_{x,p}$]
$(c,\pi)$ satisfies (H0)$_x$ and
\begin{equation*}
	\mathbb{E}\left[\int^T_0U'_1(c_s)^p\,ds+U'_2(X^{(c,\pi)}_T+E)^p\right]<\infty.
\end{equation*}
\item[(H2)$_{x,p}$]
$(c,\pi)$ satisfies (H1)$_{x,p}$. Moreover, there exists a constant $q>1$ such that:
\begin{enumerate}
\renewcommand{\labelenumi}{(\roman{enumi})}
\item
$\mathbb{E}[\int^T_0M_1(c_s;\delta)^q\,ds]<\infty$ for any $\delta\geq0$;
\item
For any $t\in[0,T)$ and $(\kappa,\eta)\in\chi_t$, the family of $\mathcal{F}_T$-measurable random variables $\{M_2(X^{(c,\pi)}_T+E;|\xi^{t,\epsilon}_T|)^q\}_{\epsilon\in(0,T-t)}$ is uniformly integrable.
\end{enumerate}
\end{itemize}
Here we used the notations
\begin{equation}\label{definition of M}
	M_i(x;\delta):=\max_{y\in\mathbb{R},\,|y|\leq\delta}|U''_i(x+y)|,\ x\in\mathbb{R},\ \delta\geq0,\ i=1,2,
\end{equation}
and, for each $t\in[0,T)$, $(\kappa,\eta)\in\chi_t$ and $\epsilon\in(0,T-t)$, $\xi^{t,\epsilon}=\xi^{t,\epsilon,\kappa,\eta}$ is defined as the unique solution of the SDE
\begin{equation}\label{perturbation SDE}
	\begin{cases}
		d\xi^{t,\epsilon}_s=r_s\xi^{t,\epsilon}_s\,ds+\eta\1_{[t,t+\epsilon)}(s)\cdot dS^\mathcal{H}_s-\kappa\1_{[t,t+\epsilon)}(s)ds,\ s\in[t,T],\\
		\xi^{t,\epsilon}_t=0.
	\end{cases}
\end{equation}
For $i=1,2$, we denote by $\Pi^{x,p}_i$ the set of $(c,\pi)$ satisfying (H$i$)$_{x,p}$. Clearly, it holds that $\Pi^{x,p}_2\subset\Pi^{x,p}_1\subset\Pi^x_0$ for $x\in\mathbb{R}$ and $p>1$, and $\Pi^{x,p}_i\subset\Pi^{x,q}_i$ for $1<q<p$, $x\in\mathbb{R}$ and $i=1,2$. (H2)$_{x,p}$ is a technical condition which will be used in Lemma~\ref{lemma: limit2} below. Now let us observe some fundamental properties of SDE~\eqref{perturbation SDE}. 


\begin{lemm}\label{lemma: perturbation SDE}
Let $t\in[0,T)$ and $(\kappa,\eta)\in\chi_t$ be fixed.
\begin{enumerate}
\renewcommand{\labelenumi}{(\roman{enumi})}
\item
For any $\epsilon\in(0,T-t)$, SDE~\eqref{perturbation SDE} has a unique solution $\xi^{t,\epsilon}\in\bigcap_{\gamma\geq1}L^\gamma_\mathbb{F}(\Omega;C([t,T];\mathbb{R}))$.
\item
For any $\gamma\geq1$, there exists a constant $C_\gamma=C(\gamma,T,\|r\|_\infty,\|\theta\|_\infty)>0$ such that, for any $\epsilon\in(0,T-t)$, it holds that
\begin{equation*}
	\mathbb{E}_t\Bigl[\sup_{t\leq s\leq T}|\xi^{t,\epsilon}_s|^{2\gamma}\Bigr]\leq\bigl(C_\gamma\epsilon(|\kappa|^2+|\eta|^2)\bigr)^\gamma\ \text{a.s.}
\end{equation*}
\item
For any $c>0$, it holds that
\begin{equation*}
	\sup_{\epsilon\in(0,T-t)}\mathbb{E}\left[\exp\left(c|\xi^{t,\epsilon}_T|\right)\right]<\infty.
\end{equation*}
\end{enumerate}
\end{lemm}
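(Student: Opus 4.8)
The plan is to treat the three assertions in turn, each reducing to standard linear-SDE estimates once the right $L^\gamma$ or exponential bounds are in place. For part (i), I would first note that SDE~\eqref{perturbation SDE} is linear in $\xi^{t,\epsilon}$ with bounded drift coefficient $r$ and an inhomogeneous term $\eta\1_{[t,t+\epsilon)}\cdot dS^\mathcal{H}-\kappa\1_{[t,t+\epsilon)}\,dt$ that, since $(\kappa,\eta)\in\chi_t$ and $\theta$ is bounded, lies in $L^\gamma_\mathbb{F}$ for every $\gamma\geq1$. Writing the explicit solution via the integrating factor $\exp(\int_t^s r_u\,du)$,
\begin{equation*}
	\xi^{t,\epsilon}_s=\int_t^s e^{\int_u^s r_v\,dv}\,\eta\1_{[t,t+\epsilon)}(u)\cdot dS^\mathcal{H}_u-\int_t^s e^{\int_u^s r_v\,dv}\,\kappa\1_{[t,t+\epsilon)}(u)\,du,\ s\in[t,T],
\end{equation*}
existence and uniqueness in $\bigcap_{\gamma\geq1}L^\gamma_\mathbb{F}(\Omega;C([t,T];\mathbb{R}))$ follows from the Burkholder--Davis--Gundy and H\"older inequalities together with the boundedness of $r$ and $\theta$; alternatively one invokes the classical well-posedness theory for linear SDEs with bounded coefficients and $L^\gamma$ data.

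For part (ii), the key point is that the perturbation is active only on the short interval $[t,t+\epsilon)$, which should produce the factor $\epsilon$ on the right-hand side. Starting from the explicit representation above (or from It\^o's formula applied to $|\xi^{t,\epsilon}_s|^{2\gamma}$), I would take conditional expectation $\mathbb{E}_t[\cdot]$, use the boundedness of $e^{\int_u^s r_v\,dv}$ by a constant depending only on $T$ and $\|r\|_\infty$, and apply the conditional Burkholder--Davis--Gundy inequality to the stochastic integral term. Because the integrand is supported on $[t,t+\epsilon)$ and $|\eta\1_{[t,t+\epsilon)}(u)\cdot dS^\mathcal{H}_u|$ has quadratic variation bounded by $C(\|\theta\|_\infty,T)|\eta|^2\1_{[t,t+\epsilon)}(u)\,du$, the conditional $2\gamma$-th moment of the martingale part is controlled by $(C_\gamma\epsilon|\eta|^2)^\gamma$; similarly the Lebesgue integral term contributes $(C_\gamma\epsilon|\kappa|)^{2\gamma}\le(C_\gamma\epsilon|\kappa|^2)^\gamma$ up to adjusting $C_\gamma$ (using $\epsilon\le T$). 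Here I am using that $\kappa,\eta$ are $\mathcal{F}_t$-measurable, so they pull out of $\mathbb{E}_t[\cdot]$. Combining and collecting constants gives the stated bound with $C_\gamma=C(\gamma,T,\|r\|_\infty,\|\theta\|_\infty)$.

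For part (iii), I would deduce the uniform exponential integrability from part (ii) by a moment-expansion argument. Fix $c>0$; then $\mathbb{E}[\exp(c|\xi^{t,\epsilon}_T|)]=\sum_{n\ge0}\frac{c^n}{n!}\mathbb{E}[|\xi^{t,\epsilon}_T|^n]$, and by part (ii) with $\gamma=n/2$ (and Jensen for odd $n$, or simply taking $\gamma=\lceil n/2\rceil$) together with $\mathbb{E}[|\xi^{t,\epsilon}_T|^n]=\mathbb{E}[\mathbb{E}_t[|\xi^{t,\epsilon}_T|^n]]$ one gets $\mathbb{E}[|\xi^{t,\epsilon}_T|^n]\le (C_{n/2}\,\epsilon(|\kappa|^2+|\eta|^2))^{n/2}$ uniformly in $\epsilon\in(0,T-t)$ (bounding $\epsilon\le T$ and $|\kappa|,|\eta|$ by $\|\kappa\|_\infty,\|\eta\|_\infty$). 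The delicate point, and the one I expect to require the most care, is the growth of the constants $C_\gamma$ in $\gamma$: a direct BDG iteration typically yields $C_\gamma$ growing like $(c'\gamma)^{\gamma}$, which would make the series diverge. To avoid this I would instead argue that $|\xi^{t,\epsilon}_T|$ is bounded by a constant (depending on $T,\|r\|_\infty$) times the supremum of a continuous martingale with deterministic quadratic-variation bound $\le C(\|\theta\|_\infty,T)(|\kappa|^2+|\eta|^2)\epsilon\le C(\|\theta\|_\infty,T)(\|\kappa\|_\infty^2+\|\eta\|_\infty^2)T$, plus a bounded Lebesgue-integral term; then a Gaussian-type tail bound for bounded-quadratic-variation martingales (e.g.\ via the exponential martingale $\exp(\lambda N_s-\tfrac{\lambda^2}{2}\langle N\rangle_s)$ and Doob's inequality, or the Dambis--Dubins--Schwarz time change reducing to a Brownian motion run for a bounded time) gives $\sup_{\epsilon}\mathbb{E}[\exp(c|\xi^{t,\epsilon}_T|)]<\infty$ directly. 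Since the supremum of the quadratic variation is uniform in $\epsilon$, so is the resulting exponential bound, which is exactly the claim.
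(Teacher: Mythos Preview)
Your proposal is correct and follows essentially the same route as the paper: for (i) and (ii) both you and the paper rely on the explicit integrating-factor representation together with standard linear-SDE $L^\gamma$ estimates (the paper obtains the conditional bound in (ii) via the $\1_A$-localization trick rather than a conditional BDG, but this is only a cosmetic difference), and for (iii) the paper uses exactly the exponential-martingale argument you settle on after correctly discarding the moment-series approach. In short, there is nothing to add.
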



\begin{proof}
See Appendix~\ref{appendix}.
\end{proof}

The following lemma says that, for a large class of utility functions, (H1)$_{x,p}$ and (H2)$_{x,p}$ become equivalent.


\begin{lemm}\label{lemma: utility functions}
Let $U_1,\,U_2\in\mathbb{U}$ be given. Suppose that $U_i$, $i=1,2$, satisfy the following conditions:
\begin{enumerate}
\renewcommand{\labelenumi}{(\roman{enumi})}
\item
$\frac{U''_i}{U'_i}$ is bounded;
\item
There exists a constant $K>0$ such that $\frac{U'_i(x)}{U'_i(y)}\leq\exp(K(y-x))$ for any $x,y\in\mathbb{R}$ with $x\leq y$. 
\end{enumerate}
Then $\Pi^{x,p}_1=\Pi^{x,p}_2$ for any $x\in\mathbb{R}$ and $p>1$.
\end{lemm}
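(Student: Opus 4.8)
The inclusion $\Pi^{x,p}_2\subset\Pi^{x,p}_1$ is immediate from the definitions, so the plan is to prove the reverse inclusion: I would fix $x\in\mathbb{R}$, $p>1$ and $(c,\pi)\in\Pi^{x,p}_1$, and verify that $(c,\pi)$ satisfies (H2)$_{x,p}$. The engine of the argument is a single pointwise estimate on the quantities $M_i$ from \eqref{definition of M}, namely
\begin{equation*}
	M_i(y;\delta)\le C_i\,e^{K\delta}\,U'_i(y),\qquad y\in\mathbb{R},\ \delta\ge0,\ i=1,2,
\end{equation*}
where $C_i:=\sup_{z\in\mathbb{R}}|U''_i(z)/U'_i(z)|<\infty$ by condition (i) of the lemma (recall $U'_i>0$). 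To obtain this I would argue: for $|y'|\le\delta$, condition (i) gives $|U''_i(y+y')|\le C_iU'_i(y+y')$, monotonicity of $U'_i$ gives $U'_i(y+y')\le U'_i(y-\delta)$, and condition (ii) applied to the ordered pair $y-\delta\le y$ gives $U'_i(y-\delta)\le e^{K\delta}U'_i(y)$; taking the maximum over $|y'|\le\delta$ yields the claim.

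Next I would fix once and for all an exponent $q\in(1,p)$ and claim it witnesses (H2)$_{x,p}$. For part (i), the pointwise bound gives $M_1(c_s;\delta)^q\le C_1^q e^{qK\delta}U'_1(c_s)^q$, so it suffices to note that $s\mapsto U'_1(c_s)$ belongs to $L^p$ of the finite measure space $(\Omega\times[0,T],\mathbb{P}\otimes\mathrm{Leb})$ by (H1)$_{x,p}$ and that $q<p$; hence $\mathbb{E}[\int^T_0U'_1(c_s)^q\,ds]<\infty$ by Jensen's inequality, and therefore $\mathbb{E}[\int^T_0M_1(c_s;\delta)^q\,ds]<\infty$ for every $\delta\ge0$.

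For part (ii) I would fix $t\in[0,T)$ and $(\kappa,\eta)\in\chi_t$, write $Y:=X^{(c,\pi)}_T+E$, and aim to show that the family $\{M_2(Y;|\xi^{t,\epsilon}_T|)^q\}_{\epsilon\in(0,T-t)}$ is bounded in $L^r(\Omega)$ for some $r>1$, which yields the required uniform integrability. Using the pointwise bound, $M_2(Y;|\xi^{t,\epsilon}_T|)^q\le C_2^q e^{qK|\xi^{t,\epsilon}_T|}U'_2(Y)^q$. I would take $r:=\sqrt{p/q}>1$, so that $qr=\sqrt{pq}\in(q,p)$, and then choose conjugate exponents $a,a'$ with $a':=p/(qr)>1$; Hölder's inequality then bounds $\mathbb{E}[M_2(Y;|\xi^{t,\epsilon}_T|)^{qr}]$ by $C_2^{qr}(\mathbb{E}[e^{aqrK|\xi^{t,\epsilon}_T|}])^{1/a}(\mathbb{E}[U'_2(Y)^{a'qr}])^{1/a'}$. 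The first factor is bounded uniformly in $\epsilon$ by Lemma~\ref{lemma: perturbation SDE}(iii) (with $c=aqrK$), and the second factor equals $(\mathbb{E}[U'_2(Y)^p])^{1/a'}$, which is finite by (H1)$_{x,p}$ and independent of $\epsilon$. This establishes the $L^r$-boundedness and hence (H2)$_{x,p}$(ii), completing the proof.

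There is no deep difficulty here, but the step requiring care is the bookkeeping of exponents in part (ii): one must take $q$ strictly below $p$ so that, after the extra Hölder splitting needed to decouple the perturbation endpoint $\xi^{t,\epsilon}_T$ from $Y$, the surviving power of $U'_2(Y)$ is still at most $p$ — the only moment of $U'_2(Y)$ controlled by (H1)$_{x,p}$. This is exactly where the assumption $p>1$ enters, in tandem with the fact (Lemma~\ref{lemma: perturbation SDE}(iii)) that $\xi^{t,\epsilon}_T$ has finite exponential moments of every order, uniformly in $\epsilon$.
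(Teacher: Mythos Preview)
Your proof is correct and follows essentially the same route as the paper: both derive the pointwise bound $M_i(y;\delta)\le\|U''_i/U'_i\|_\infty e^{K\delta}U'_i(y)$ and then use H\"older's inequality together with Lemma~\ref{lemma: perturbation SDE}(iii) to control the exponential factor uniformly in~$\epsilon$. The only cosmetic difference is that you fix one $q\in(1,p)$ and exhibit an explicit $r>1$ for the $L^r$-boundedness, whereas the paper proves the $L^q$-bound for every $q\in(1,p)$ and infers uniform integrability from that.
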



\begin{proof}
See Appendix~\ref{appendix}.
\end{proof}


\begin{rem}
Clearly the exponential utility function $U(x)=-\exp(-\gamma x)$ with some constant $\gamma>0$ satisfies the assumptions of Lemma~\ref{lemma: utility functions}. More generally, consider the utility function
\begin{equation*}
	U(x)=-\int^\infty_x\Bigl(\int^\infty_y\exp(-\kappa(z))\,dz\Bigr)\,dy
\end{equation*}
with some twice differentiable function $\kappa:\mathbb{R}\to\mathbb{R}$ satisfying
\begin{align*}
	0<\inf_{x\in\mathbb{R}}\kappa'(x)\leq\sup_{x\in\mathbb{R}}\kappa'(x)<\infty\ \text{and}\ 0\leq\inf_{x\in\mathbb{R}}\kappa''(x)\leq\sup_{x\in\mathbb{R}}\kappa''(x)<\infty,
\end{align*}
which was introduced by Fromm and Imkeller~\cite{a_Fromm-Imkeller_20}. The above utility function also satisfies the assumptions of Lemma~\ref{lemma: utility functions}. Indeed, by Lemma~1.2 in \cite{a_Fromm-Imkeller_20}, $\frac{U''}{U'}$ is bounded. Moreover, for any $x,y\in\mathbb{R}$ with $x\leq y$,
\begin{align*}
	U'(x)&=\int^\infty_x\exp(-\kappa(z))\,dz=\int^\infty_y\exp\bigl(-\kappa(z-(y-x))\bigr)\,dz\\
	&\leq\int^\infty_y\exp\bigl(-\kappa(z)+\kappa'(z)(y-x)\bigr)\,dz\\
	&\leq\int^\infty_y\exp(-\kappa(z))\,dz\,\exp\bigl(\|\kappa'\|_\infty(y-x)\bigr)\\
	&=U'(y)\exp\bigl(\|\kappa'\|_\infty(y-x)\bigr),
\end{align*}
where in the first inequality we used the convexity of $\kappa$. Therefore the second assumption of Lemma~\ref{lemma: utility functions} is also satisfied.
\end{rem}


\section{A necessary condition for an equilibrium pair}\label{section: necessary condition}

\paragraph{}
\ \,In this section, we provide a necessary condition for an open-loop equilibrium pair. Throughout this section, we fix $U_1,U_2\in\mathbb{U}$ and $\lambda_1,\lambda_2\in\Lambda$.


\begin{theo}\label{theorem: necessary condition}
Fix an initial wealth $x\in\mathbb{R}$, and let $(c^*,\pi^*)\in\Pi^{x,p}_2$ for some $p>1$. If $(c^*,\pi^*)$ is an open-loop equilibrium pair, then there exists a pair $(Y,Z)$ such that:
\begin{enumerate}
\renewcommand{\labelenumi}{(\roman{enumi})}
\item
$Y$ is an $\mathbb{R}$-valued continuous adapted process such that $U'_2(X^*+Y)$ is in $L^p_\mathbb{F}(\Omega;C([0,T];\mathbb{R}))$, and $Z$ is an $\mathbb{R}^d$-valued predictable process satisfying $\int^T_0|Z_s|^2\,ds<\infty$ a.s.;
\item
$(Y,Z)$ satisfies the following BSDE:
\begin{equation}\label{BSDE for (Y,Z)}
	\begin{cases}
		dY_s=Z_s\cdot dW_s+f^*(s,Y_s,Z_s)\,ds,\ s\in[0,T],\\
		Y_T=E,
	\end{cases}
\end{equation}
where the generator $f^*: \Omega\times[0,T]\times\mathbb{R}\times\mathbb{R}^d\to\mathbb{R}$ is defined by
\begin{equation*}
	f^*(s,y,z):=-\frac{1}{2}\frac{U^{(3)}_2}{U''_2}(X^*_s+y)|\pi^*_s+z|^2-r_s\frac{U'_2}{U''_2}(X^*_s+y)-r_sX^*_s-\pi^*_s\cdot\theta^\mathcal{H}_s-e_s+c^*_s
\end{equation*}
for $s\in[0,T]$ and $(y,z)\in\mathbb{R}\times\mathbb{R}^d$ (where we suppressed $\omega$);
\item
It holds that
\begin{equation}\label{equilibrium condition}
	\begin{cases}
		c^*_s=(U'_1)^{-1}(\lambda_2(s,T)U'_2(X^*_s+Y_s)),\\
		\pi^*_s=-\frac{U'_2}{U''_2}(X^*_s+Y_s)\theta^\mathcal{H}_s-Z^\mathcal{H}_s,
	\end{cases}
	\ \text{a.s.\ for a.e.}\,s\in[0,T].
\end{equation}
\end{enumerate}
In particular, the triplet $(X,Y,Z):=(X^*,Y,Z)$ satisfies the following coupled FBSDE:
\begin{equation}\label{FBSDE}
	\begin{cases}
		X_t=x-\int^t_0\bigl(\frac{U'_2}{U''_2}(X_s+Y_s)\theta^\mathcal{H}_s+Z^\mathcal{H}_s\bigr)\cdot dW_s\\
		\hspace{1cm}+\int^t_0\bigl(r_sX_s-|\theta^\mathcal{H}_s|^2\frac{U'_2}{U''_2}(X_s+Y_s)-\theta^\mathcal{H}_s\cdot Z^\mathcal{H}_s+e_s-(U'_1)^{-1}(\lambda_2(s,T)U'_2(X_s+Y_s))\bigr)\,ds,\\
		Y_t=E-\int^T_tZ_s\cdot dW_s\\
		\hspace{1cm}-\int^T_t\bigl(-r_sX_s+|\theta^\mathcal{H}_s|^2\frac{U'_2}{U''_2}(X_s+Y_s)+\theta^\mathcal{H}_s\cdot Z^\mathcal{H}_s-e_s+(U'_1)^{-1}(\lambda_2(s,T)U'_2(X_s+Y_s))\\
		\hspace{2cm}-r_s\frac{U'_2}{U''_2}(X_s+Y_s)-\frac{1}{2}|\theta^\mathcal{H}_s|^2\frac{U^{(3)}_2(X_s+Y_s)(U'_2(X_s+Y_s))^2}{(U''_2(X_s+Y_s))^3}-\frac{1}{2}\frac{U^{(3)}_2}{U''_2}(X_s+Y_s)|Z^\mathcal{O}_s|^2\bigr)\,ds,\\
		\hspace{13cm}t\in[0,T].
	\end{cases}
\end{equation}
\end{theo}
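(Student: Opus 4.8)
The plan is to use the standard perturbation (spike variation) argument for open-loop equilibria, adapted to the consumption-investment setting with random coefficients. First I would define, for the given equilibrium pair $(c^*,\pi^*)$ and the optimal wealth $X^*$, an auxiliary BSDE. The natural candidate for $(Y,Z)$ is the adjoint-type process associated with the terminal utility $U_2$; concretely, one expects $U'_2(X^*_s+Y_s)$ to play the role of a density process (up to normalization) of an equivalent measure under which the perturbation effects become tractable. So I would first posit that $Y$ solves the BSDE \eqref{BSDE for (Y,Z)} with terminal condition $Y_T=E$ and the stated generator $f^*$, and check that this generator is well-defined given that $(c^*,\pi^*)\in\Pi^{x,p}_2$ (in particular that $U^{(3)}_2/U''_2$ composed with $X^*+y$ and multiplied by $|\pi^*+z|^2$ is integrable in the appropriate sense). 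Existence/uniqueness of a solution $(Y,Z)$ in the stated class is itself a nontrivial point, but since the statement only asserts existence of \emph{some} such pair, I would construct $Y$ directly: set $N_s := U'_2(X^*_s+Y_s)$ as the object of interest and derive its dynamics, or alternatively reverse-engineer $Y$ from the first-order condition that the equilibrium must satisfy.

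The core of the argument is the first-order condition coming from the definition of an open-loop equilibrium. For fixed $t\in[0,T)$ and $(\kappa,\eta)\in\chi_t$, I would expand $R(c^{t,\epsilon},\pi^{t,\epsilon};t,X^*_t)-R(c^*,\pi^*;t,X^*_t)$ to first order in $\epsilon$. Using \eqref{perturbation SDE}, the perturbed wealth is $X^{(c^{t,\epsilon},\pi^{t,\epsilon})} = X^* + \xi^{t,\epsilon}$, and Lemma~\ref{lemma: perturbation SDE} gives the bounds $\mathbb{E}_t[\sup_s|\xi^{t,\epsilon}_s|^{2\gamma}]\le (C_\gamma\epsilon(|\kappa|^2+|\eta|^2))^\gamma$ and exponential moment control of $|\xi^{t,\epsilon}_T|$, which together with (H2)$_{x,p}$ justify the dominated-convergence / uniform-integrability steps needed to pass to the limit $\epsilon\downarrow0$. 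Writing the Taylor expansion of $U_1(c^{t,\epsilon}_s)$ around $c^*_s$ and of $U_2(X^*_T+\xi^{t,\epsilon}_T+E)$ around $X^*_T+E$, dividing by $\epsilon$, and letting $\epsilon\downarrow0$ yields
\begin{equation*}
	\mathbb{E}_t\Bigl[\lambda_1(t,s)U'_1(c^*_s)\kappa\,\big|_{s=t}\ \text{(consumption term)} + \lambda_2(t,T)U'_2(X^*_T+E)\,\partial_\epsilon\xi^{t,\epsilon}_T\big|_{\epsilon=0}\Bigr]\le0,
\end{equation*}
where $\partial_\epsilon\xi^{t,\epsilon}_T|_{\epsilon=0}$ is the solution at time $T$ of the linear SDE driven by $\kappa,\eta$ at the single time $t$. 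More precisely, after the Lebesgue-point argument in $t$, the inequality must hold with equality (since $(\kappa,\eta)$ ranges over a linear space), giving a pointwise first-order condition a.e.\ in $s$.

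To convert this into the explicit relations \eqref{equilibrium condition}, I would introduce the linear BSDE adjoint to \eqref{perturbation SDE}: let $(\Gamma,\cdot)$ be the "density-like" process whose product with $\xi^{t,\epsilon}$ has no drift except the contributions from $\kappa$ and $\eta$. Applying It\^o's formula to $U'_2(X^*_s+Y_s)\,\xi^{t,\epsilon}_s$ — this is exactly where the generator $f^*$ is designed so that the drift cancels appropriately — one expresses $\mathbb{E}_t[U'_2(X^*_T+E)\xi^{t,\epsilon}_T]$ in terms of integrals against $\kappa\1_{[t,t+\epsilon)}$ and $\eta\1_{[t,t+\epsilon)}$ with integrand involving $U'_2(X^*_s+Y_s)$, $Z_s$, $\theta^\mathcal{H}_s$ and $r_s$. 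Matching the coefficient of $\eta$ to zero forces $\pi^*_s = -\frac{U'_2}{U''_2}(X^*_s+Y_s)\theta^\mathcal{H}_s - Z^\mathcal{H}_s$; matching the coefficient of $\kappa$ against the consumption term $\lambda_1(t,s)U'_1(c^*_s)$ evaluated near $s=t$, together with $\lambda_1(t,t)=1$ and $\lambda_2(t,t)$-type normalizations from $\lambda\in\Lambda$, yields $U'_1(c^*_s) = \lambda_2(s,T)U'_2(X^*_s+Y_s)$, i.e.\ $c^*_s=(U'_1)^{-1}(\lambda_2(s,T)U'_2(X^*_s+Y_s))$, using that $U_1\in\mathbb{U}$ makes $U'_1$ a bijection onto $(0,\infty)$. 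Finally, substituting these expressions for $c^*$ and $\pi^*$ back into the wealth SDE \eqref{wealth SDE} and into $f^*$ gives the coupled FBSDE \eqref{FBSDE}, after expanding $|\pi^*_s+Z_s|^2 = |{-\frac{U'_2}{U''_2}(X^*_s+Y_s)\theta^\mathcal{H}_s - Z^\mathcal{H}_s + Z_s}|^2 = |\frac{U'_2}{U''_2}(X^*_s+Y_s)|^2|\theta^\mathcal{H}_s|^2 + |Z^\mathcal{O}_s|^2$ (using orthogonality of the $\mathcal{H}$ and $\mathcal{O}$ components) and collecting the $r_s\frac{U'_2}{U''_2}$ term.

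The main obstacle I anticipate is the rigorous justification of differentiating the reward functional under the expectation and passing $\epsilon\downarrow0$ inside $\mathbb{E}_t[\cdot]$: the utility functions are only defined on $\mathbb{R}$ with no polynomial growth control (they may be exponential-type), so the Taylor remainder for $U_2$ involves $M_2(X^*_T+E;|\xi^{t,\epsilon}_T|)$, and controlling it uniformly in $\epsilon$ is precisely what condition (H2)$_{x,p}$(ii) and the exponential moment bound in Lemma~\ref{lemma: perturbation SDE}(iii) are there to handle — stitching these together (presumably via the two technical lemmas "limit1" and "limit2" referenced in the paper) is the delicate part. A secondary subtlety is the Lebesgue-differentiation step converting the family of inequalities indexed by $t$ and $(\kappa,\eta)$ into an a.e.-pointwise identity, and verifying that $Z$ obtained from the BSDE lies in the stated (only locally square-integrable) class rather than a nicer space, given the weak integrability assumptions.
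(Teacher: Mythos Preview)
Your overall strategy matches the paper's: construct an adjoint pair $(Y,Z)$, apply It\^o to $U'_2(X^*+Y)\,\xi^{t,\epsilon}$ to express $\mathbb{E}_t[U'_2(X^*_T+E)\xi^{t,\epsilon}_T]$ in terms of coefficients of $\kappa$ and $\eta$, control the second-order remainder via (H2)$_{x,p}$ and Lemma~\ref{lemma: perturbation SDE}, then use Lebesgue-differentiation (via Wang's lemma) and vary $(\kappa,\eta)$ to extract \eqref{equilibrium condition}.

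The one point where you are vague and where the paper is crisp is the \emph{construction} of $(Y,Z)$. You oscillate between ``posit that $Y$ solves the BSDE \eqref{BSDE for (Y,Z)}'' (which would require well-posedness of a quadratic BSDE with no a priori bounds on $X^*+Y$) and ``reverse-engineer $Y$ from the first-order condition'' (circular). The paper's actual route is your middle option, made precise: solve first the \emph{linear} BSDE $d\alpha_s=-r_s\alpha_s\,ds+\beta_s\cdot dW_s$, $\alpha_T=U'_2(X^*_T+E)$, which has a unique $L^p$ solution since $U'_2(X^*_T+E)\in L^p$ by (H1)$_{x,p}$, then \emph{define} $Y:=(U'_2)^{-1}(\alpha)-X^*$ and $Z:=\frac{1}{U''_2(X^*+Y)}\beta-\pi^*$. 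It\^o's formula applied to $(U'_2)^{-1}(\alpha)$ then \emph{shows} that $(Y,Z)$ satisfies \eqref{BSDE for (Y,Z)} with generator $f^*$; no existence theory for the nonlinear BSDE is invoked. This also immediately gives $U'_2(X^*+Y)=\alpha\in L^p_\mathbb{F}(\Omega;C([0,T];\mathbb{R}))$ and $U''_2(X^*+Y)(\pi^*+Z)=\beta\in L^p_\mathbb{F}(\Omega;L^2)$, which are exactly the integrability inputs needed for the martingale arguments and for Wang's lemma in the limit step. Once you adopt this construction, the rest of your sketch (It\^o product, cancellation of drift by design of $f^*$, first-order conditions, and the final substitution into \eqref{wealth SDE} and $f^*$ to get \eqref{FBSDE}) is exactly what the paper does.
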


To prove Theorem~\ref{theorem: necessary condition}, let us show several lemmas. We fix $x\in\mathbb{R}$ and $p>1$. The method of the proof of the following lemma is inspired by Horst et al.~\cite{a_Horst-_14}.


\begin{lemm}\label{lemma: variational equality}
For any $(c^*,\pi^*)\in\Pi^{x,p}_1$, there exists a pair $(Y,Z)$ satisfying the conditions (\rnum{1}) and (\rnum{2}) in Theorem~\ref{theorem: necessary condition} such that, for any $t\in[0,T)$, $(\kappa,\eta)\in\chi_t$ and $\epsilon\in(0,T-t)$, it holds that, a.s.,
\begin{align}\label{variational equality}
	\nonumber&\frac{R(c^{t,\epsilon},\pi^{t,\epsilon};t,X^*_t)-R(c^*,\pi^*;t,X^*_t)}{\epsilon}\\
	\nonumber&=\left(\frac{1}{\epsilon}\mathbb{E}_t\left[\int^{t+\epsilon}_t\bigl(\lambda_1(t,s)U'_1(c^*_s)-\lambda_2(t,T)U'_2(X^*_s+Y_s)\bigr)\,ds\right]\right)\,\kappa\\
	\nonumber&+\lambda_2(t,T)\left(\frac{1}{\epsilon}\mathbb{E}_t\left[\int^{t+\epsilon}_t\bigl(U'_2(X^*_s+Y_s)\theta^\mathcal{H}_s+U''_2(X^*_s+Y_s)(\pi^*_s+Z^\mathcal{H}_s)\bigr)\,ds\right]\right)\cdot\eta\\
	&-\frac{1}{2\epsilon}\mathbb{E}_t\left[\int^{t+\epsilon}_t\lambda_1(t,s)\int^1_0\bigl|U''_1(c^*_s+\mu\kappa)\bigr|\,d\mu\,ds\,|\kappa|^2+\lambda_2(t,T)\int^1_0\bigl|U''_2(X^*_T+E+\mu\xi^{t,\epsilon}_T)\bigr|\,d\mu\,\bigl|\xi^{t,\epsilon}_T\bigr|^2\right].
\end{align}
\end{lemm}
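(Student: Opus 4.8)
The plan is to expand the difference $R(c^{t,\epsilon},\pi^{t,\epsilon};t,X^*_t)-R(c^*,\pi^*;t,X^*_t)$ by a first-order Taylor argument around $(c^*,\pi^*)$, identifying the "correction term" $Y$ through a suitable BSDE so that the linear-in-$(\kappa,\eta)$ part takes the displayed form. First I would note that, since the perturbation $(\kappa\mathbf{1}_{[t,t+\epsilon)},\eta\mathbf{1}_{[t,t+\epsilon)})$ is supported on $[t,t+\epsilon)$, the difference of the perturbed and unperturbed wealth processes solves the linear SDE \eqref{perturbation SDE}, i.e.\ $X^{(c^{t,\epsilon},\pi^{t,\epsilon})}_s - X^*_s = \xi^{t,\epsilon}_s$ for $s\in[t,T]$; Lemma~\ref{lemma: perturbation SDE} then gives the integrability and the exponential moment bounds needed to justify all the limit and interchange operations below. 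Splitting the reward \eqref{reward functional} into the running consumption part and the terminal part, the consumption part contributes $\mathbb{E}_t[\int_t^{t+\epsilon}\lambda_1(t,s)(U_1(c^*_s+\kappa)-U_1(c^*_s))\,ds]$, and applying the exact first-order Taylor formula with integral remainder $U_1(a+h)-U_1(a)=U_1'(a)h+\big(\int_0^1(1-\mu)\,d\mu\cdot\dots\big)$ — more precisely $U_1(c^*_s+\kappa)-U_1(c^*_s)=U_1'(c^*_s)\kappa+\kappa^2\int_0^1(1-\mu)U_1''(c^*_s+\mu\kappa)\,d\mu$, but it is cleaner to write $U_1'(c^*_s)\kappa - \kappa^2\int_0^1\mu|U_1''(c^*_s+\mu\kappa)|\,d\mu$ after noting $U_1''<0$ — yields exactly the $\kappa$-linear term with coefficient $\lambda_1(t,s)U_1'(c^*_s)$ and the first half of the quadratic remainder in \eqref{variational equality}.

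The terminal part is the genuine content: I must show $\mathbb{E}_t[\lambda_2(t,T)(U_2(X^*_T+E+\xi^{t,\epsilon}_T)-U_2(X^*_T+E))]$ produces both a $\kappa$-term and an $\eta$-term. The key step, following Horst et al.~\cite{a_Horst-_14}, is to introduce $(Y,Z)$ as the solution of the BSDE \eqref{BSDE for (Y,Z)} with generator $f^*$ and terminal value $E$, and then to apply Itô's formula to the process $s\mapsto U_2'(X^*_s+Y_s)\,\xi^{t,\epsilon}_s$ on $[t,T]$. The point of the particular form of $f^*$ is that the $ds$-drift terms arising from $d(X^*_s+Y_s)$ (which involves $f^*$, $r$, $c^*$, $e$, $\theta^{\mathcal H}$) and from $\tfrac12 U_2^{(3)}(X^*_s+Y_s)|\pi^*_s+Z^{\mathcal H}_s|^2$ (the second-order Itô term, since $d(X^*_s+Y_s)$ has diffusion $(\pi^*_s+Z_s)$, recalling the wealth SDE \eqref{wealth SDE}) exactly cancel against the $r_s\xi^{t,\epsilon}_s$ drift in \eqref{perturbation SDE} — so that $d\big(U_2'(X^*_s+Y_s)\xi^{t,\epsilon}_s\big)$ reduces on $[t+\epsilon,T]$ to a pure stochastic integral, hence a local martingale. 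After localizing and using the integrability from (H1)$_{x,p}$, Lemma~\ref{lemma: perturbation SDE}, and Hölder, one gets $\mathbb{E}_t[U_2'(X^*_T+E+\text{(approx)})\,\xi^{t,\epsilon}_T]$ expressed in terms of $\mathbb{E}_t[\int_t^{t+\epsilon}(\dots)]$, where the integrand involves $U_2'(X^*_s+Y_s)$ against the $-\kappa\,ds$ drift and $U_2'(X^*_s+Y_s)\theta^{\mathcal H}_s+U_2''(X^*_s+Y_s)(\pi^*_s+Z^{\mathcal H}_s)$ against the $\eta\cdot dS^{\mathcal H}_s$ term of \eqref{perturbation SDE} (using that the bracket of $S^{\mathcal H}$ with the diffusion of $X^*+Y$ picks out $\pi^*+Z^{\mathcal H}$, plus the drift $\theta^{\mathcal H}$). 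Writing the terminal utility increment as $U_2'(X^*_T+E)\xi^{t,\epsilon}_T$ plus the quadratic remainder $-\big(\xi^{t,\epsilon}_T\big)^2\int_0^1\mu|U_2''(X^*_T+E+\mu\xi^{t,\epsilon}_T)|\,d\mu$ — wait, with the convention in \eqref{variational equality} it is $\int_0^1|U_2''(X^*_T+E+\mu\xi^{t,\epsilon}_T)|\,d\mu\,|\xi^{t,\epsilon}_T|^2$ scaled by $-\tfrac12$, which I match by using the symmetric remainder form $U_2(a+h)-U_2(a)-U_2'(a)h = h^2\int_0^1(1-\mu)U_2''(a+\mu h)\,d\mu$ and $|{\cdot}|$-bounding — and replacing $U_2'(X^*_T+E)\xi^{t,\epsilon}_T$ with the Itô identity, the terminal contribution splits precisely into the $\eta$-linear term, the $\kappa$-contribution $-\lambda_2(t,T)\,\mathbb{E}_t[\int_t^{t+\epsilon}U_2'(X^*_s+Y_s)\,ds]\,\kappa$ that combines with the consumption $\kappa$-term, and the second half of the quadratic remainder.

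Finally I would verify condition (i) of Theorem~\ref{theorem: necessary condition} for this $(Y,Z)$: existence and uniqueness of the BSDE solution with $U_2'(X^*+Y)\in L^p_\mathbb{F}(\Omega;C([0,T];\mathbb{R}))$ should follow by the same argument as in \cite{a_Horst-_14} — rewriting the BSDE for $\widetilde Y:=X^*+Y$, whose terminal value $X^*_T+E$ has $U_2'(X^*_T+E)\in L^p$ by (H1)$_{x,p}$, and whose generator has the structure handled there; then dividing by $\epsilon$ gives \eqref{variational equality}. The main obstacle, as usual in this line of argument, is the \emph{rigorous interchange of expectation, the Itô localization, and the Taylor remainder estimates}: one must control $\int_0^1|U_2''(X^*_T+E+\mu\xi^{t,\epsilon}_T)|\,d\mu\,|\xi^{t,\epsilon}_T|^2$ and the stochastic-integral remainders uniformly enough in $\epsilon$ to pass to the limit — this is exactly why (H1)$_{x,p}$ (the $L^p$-control of $U_i'$) is imposed and why the exponential-moment bound in Lemma~\ref{lemma: perturbation SDE}(iii), together with the possible superexponential growth of $U_2''$, is the delicate point; the stronger condition (H2)$_{x,p}$ is deferred to the next lemma precisely for the $\epsilon\downarrow0$ limit, so here I only need finiteness for fixed $\epsilon$, which (H1)$_{x,p}$ and Hölder deliver.
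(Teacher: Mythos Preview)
Your overall strategy matches the paper's: Taylor-expand the reward, isolate $\mathbb{E}_t[U'_2(X^*_T+E)\xi^{t,\epsilon}_T]$, and use It\^{o}'s formula on the product of $\xi^{t,\epsilon}$ with a well-chosen process to reduce this conditional expectation to an integral over $[t,t+\epsilon]$. The difference lies in how the pair $(Y,Z)$ is produced. You propose to \emph{solve} the nonlinear BSDE~\eqref{BSDE for (Y,Z)} directly and then check condition~(i) afterwards; the paper instead \emph{constructs} $(Y,Z)$ by first solving the \emph{linear} BSDE
\[
d\alpha_s=-r_s\alpha_s\,ds+\beta_s\cdot dW_s,\qquad \alpha_T=U'_2(X^*_T+E),
\]
which has a unique solution $(\alpha,\beta)\in L^p_\mathbb{F}(\Omega;C([0,T];\mathbb{R}))\times L^p_\mathbb{F}(\Omega;L^2(0,T;\mathbb{R}^d))$ by standard theory, and then \emph{defines} $Y:=(U'_2)^{-1}(\alpha)-X^*$ and $Z:=\frac{1}{U''_2(X^*+Y)}\beta-\pi^*$. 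It\^{o}'s formula then shows that this $(Y,Z)$ satisfies~\eqref{BSDE for (Y,Z)}, and condition~(i) holds automatically because $U'_2(X^*+Y)=\alpha$ is in $L^p$ by construction. The subsequent computation is done with the product $\alpha_s\xi^{t,\epsilon}_s$, which is exactly your $U'_2(X^*_s+Y_s)\xi^{t,\epsilon}_s$; the drift cancellation you anticipate is then immediate from the linear dynamics of $\alpha$, with no need to unpack the structure of $f^*$.

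What the paper's route buys is that existence, the positivity of $\alpha$, and the $L^p$-integrability all come for free from the linear BSDE, so there is nothing to defer to \cite{a_Horst-_14}. Your route---posit the nonlinear BSDE, then transform to $\tilde Y=X^*+Y$---would ultimately lead to the same linear equation for $U'_2(\tilde Y)$, so it is not wrong, but you have stopped one transformation short of the step that makes the construction explicit. In particular, appealing to \cite{a_Horst-_14} for well-posedness of~\eqref{BSDE for (Y,Z)} under only (H1)$_{x,p}$ is not clean: the generator $f^*$ is quadratic in $z$ with random unbounded coefficients (through $X^*$, $\pi^*$, $c^*$), and no boundedness of $U^{(3)}_2/U''_2$ or $U'_2/U''_2$ is assumed here. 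The paper sidesteps this entirely by never solving~\eqref{BSDE for (Y,Z)} as such.
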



\begin{proof}
Let $(c^*,\pi^*)\in\Pi^{x,p}_1$ and fix $t\in[0,T)$, $(\kappa,\eta)\in\chi_t$ and $\epsilon\in(0,T-t)$. Define $X^{t,\epsilon}:=X^{(c^{t,\epsilon},\pi^{t,\epsilon},t,X^*_t)}$ and $\xi^{t,\epsilon}:=X^{t,\epsilon}-X^*$. Then $\xi^{t,\epsilon}$ is the solution of SDE~(\ref{perturbation SDE}). Noting that $U''_1$ and $U''_2$ are negative, we see that
\begin{align}\label{variational equality'}
	\nonumber&\frac{R(c^{t,\epsilon},\pi^{t,\epsilon};t,X^*_t)-R(c^*,\pi^*;t,X^*_t)}{\epsilon}\\
	\nonumber&=\frac{1}{\epsilon}\mathbb{E}_t\left[\int^T_t\lambda_1(t,s)\bigl(U_1(c^{t,\epsilon}_s)-U_1(c^*_s)\bigr)\,ds+\lambda_2(t,T)\bigl(U_2(X^{t,\epsilon}_T+E)-U_2(X^*_T+E)\bigr)\right]\\
	\nonumber&=\left(\frac{1}{\epsilon}\mathbb{E}_t\left[\int^{t+\epsilon}_t\lambda_1(t,s)U'_1(c^*_s)\,ds\right]\right)\,\kappa+\lambda_2(t,T)\frac{1}{\epsilon}\mathbb{E}_t\bigl[U'_2(X^*_T+E)\xi^{t,\epsilon}_T\bigr]\\
	&-\frac{1}{2\epsilon}\mathbb{E}_t\left[\int^{t+\epsilon}_t\lambda_1(t,s)\int^1_0\bigl|U''_1(c^*_s+\mu\kappa)\bigr|\,d\mu\,ds\,|\kappa|^2+\lambda_2(t,T)\int^1_0\bigl|U''_2(X^*_T+E+\mu\xi^{t,\epsilon}_T)\bigr|\,d\mu\,\bigl|\xi^{t,\epsilon}_T\bigr|^2\right].
\end{align}
Now we investigate the conditional expectation $\mathbb{E}_t[U'_2(X^*_T+E)\xi^{t,\epsilon}_T]$. Note that $U'_2(X^*_T+E)\in L^p_{\mathcal{F}_T}(\Omega;\mathbb{R})$ since $(c^*,\pi^*)\in\Pi^{x,p}_1$. Let $(\alpha,\beta)\in L^p_\mathbb{F}(\Omega;C([0,T];\mathbb{R}))\times L^p_\mathbb{F}(\Omega;L^2(0,T;\mathbb{R}^d))$ be the unique adapted solution of the BSDE
\begin{equation}\label{duality BSDE}
	\begin{cases}
		d\alpha_s=-r_s\alpha_s\,ds+\beta_s\cdot dW_s,\ s\in[0,T],\\
		\alpha_T=U'_2(X^*_T+E).
	\end{cases}
\end{equation}
Then $\alpha_t=\mathbb{E}_t[e^{-\int^T_tr_s\,ds}U'_2(X^*_T+E)]$ a.s.\ for any $t\in[0,T]$; see Proposition~4.1.2 in the textbook~\cite{b_Zhang_17}. In particular, $\alpha_t$ is positive, and hence $(U'_2)^{-1}(\alpha_t)$ is well-defined for any $t\in[0,T]$ a.s. Define a process $Y$ by $Y:=(U'_2)^{-1}(\alpha)-X^*$. Then $Y$ is an $\mathbb{R}$-valued continuous adapted process such that $U'_2(X^*+Y)=\alpha\in L^p_\mathbb{F}(\Omega;C([0,T];\mathbb{R}))$ and $Y_T=E$. It\^{o}'s formula yields that
\begin{align*}
	d(X^*_s+Y_s)&=d(U'_2)^{-1}(\alpha_s)\\
	&=\frac{1}{U''_2((U'_2)^{-1}(\alpha_s))}\,d\alpha_s-\frac{1}{2}\frac{U^{(3)}_2((U'_2)^{-1}(\alpha_s))}{(U''_2((U'_2)^{-1}(\alpha_s)))^3}\,d\langle\alpha,\alpha\rangle_s\\
	&=\frac{1}{U''_2(X^*_s+Y_s)}\beta_s\cdot dW_s+\left(-\frac{1}{2}\frac{U^{(3)}_2(X^*_s+Y_s)}{(U''_2(X^*_s+Y_s))^3}|\beta_s|^2-r_s\frac{U'_2}{U''_2}(X^*_s+Y_s)\right)\,ds,
\end{align*}
and hence
\begin{align*}
	dY_s=&\left(\frac{1}{U''_2(X^*_s+Y_s)}\beta_s-\pi^*_s\right)\cdot dW_s\\
	&+\left(-\frac{1}{2}\frac{U^{(3)}_2(X^*_s+Y_s)}{(U''_2(X^*_s+Y_s))^3}|\beta_s|^2-r_s\frac{U'_2}{U''_2}(X^*_s+Y_s)-r_sX^*_s-\pi^*_s\cdot\theta^\mathcal{H}_s-e_s+c^*_s\right)\,ds.
\end{align*}
Define $Z:=\frac{1}{U''_2(X^*+Y)}\beta-\pi^*$. Then clearly $Z$ is an $\mathbb{R}^d$-valued predictable process satisfying $\int^T_0|Z_s|^2\,ds<\infty$ a.s. Moreover, from the above equation, we see that $(Y,Z)$ satisfies BSDE~(\ref{BSDE for (Y,Z)}). Hence, $(Y,Z)$ satisfies the conditions (\rnum{1}) and (\rnum{2}) in Theorem~\ref{theorem: necessary condition}. It remains to prove that the equality (\ref{variational equality}) holds. Observe that, by using It\^{o}'s formula for $(\alpha_s\xi^{t,\epsilon}_s)_{s\in[t+\epsilon,T]}$, it holds that
\begin{equation*}
	\alpha_T\xi^{t,\epsilon}_T=\alpha_{t+\epsilon}\xi^{t,\epsilon}_{t+\epsilon}+\int^T_{t+\epsilon}\xi^{t,\epsilon}_s\beta_s\cdot dW_s.
\end{equation*}
Furthermore, since $\xi^{t,\epsilon}\in\bigcap_{\gamma\geq1}L^\gamma_\mathbb{F}(\Omega;C([t,T];\mathbb{R}))$ and $\beta\in L^p_\mathbb{F}(\Omega;L^2(0,T;\mathbb{R}^d))$ with $p>1$, we have
\begin{align*}
	\mathbb{E}\Bigl[\Bigl(\int^T_t|\xi^{t,\epsilon}_s\beta_s|^2\,ds\Bigr)^{1/2}\Bigr]&\leq\mathbb{E}\Bigl[\sup_{t\leq s\leq T}|\xi^{t,\epsilon}_s|\Bigl(\int^T_t|\beta_s|^2\,ds\Bigr)^{1/2}\Bigr]\\
	&\leq \mathbb{E}\Bigl[\sup_{t\leq s\leq T}|\xi^{t,\epsilon}_s|^{p'}\Bigr]^{1/p'}\mathbb{E}\Bigl[\Bigl(\int^T_t|\beta_s|^2\,ds\Bigr)^{p/2}\Bigr]^{1/p}<\infty,
\end{align*}
where $p'>1$ is the conjugate of $p>1$. Thus the stochastic integral $\int^\cdot_t\xi^{t,\epsilon}_s\beta_s\cdot dW_s$ is a martingale, and it holds that $\mathbb{E}_{t+\epsilon}\bigl[\int^T_{t+\epsilon}\xi^{t,\epsilon}_s\beta_s\cdot dW_s\bigr]=0$ a.s. Therefore, by taking the conditional expectations, we see that
\begin{equation*}
	\mathbb{E}_t[U'_2(X^*_T+E)\xi^{t,\epsilon}_T]=\mathbb{E}_t[\alpha_T\xi^{t,\epsilon}_T]=\mathbb{E}_t\bigl[\mathbb{E}_{t+\epsilon}[\alpha_T\xi^{t,\epsilon}_T]\bigr]=\mathbb{E}_t[\alpha_{t+\epsilon}\xi^{t,\epsilon}_{t+\epsilon}]=\mathbb{E}_t[U'_2(X^*_{t+\epsilon}+Y_{t+\epsilon})\xi^{t,\epsilon}_{t+\epsilon}].
\end{equation*}
Again by It\^{o}'s formula,
\begin{align*}
	&U'_2(X^*_{t+\epsilon}+Y_{t+\epsilon})\xi^{t,\epsilon}_{t+\epsilon}\\
	&=U'_2(X^*_t+Y_t)\xi^{t,\epsilon}_t+\int^{t+\epsilon}_tU'_2(X^*_s+Y_s)\bigl(\eta\cdot dW^\mathcal{H}_s+(r_s\xi^{t,\epsilon}_s+\eta\cdot\theta^\mathcal{H}_s-\kappa)\,ds\bigr)\\
	&\hspace{0.5cm}+\int^{t+\epsilon}_t\xi^{t,\epsilon}_sU''_2(X^*_s+Y_s)\bigl((\pi^*_s+Z_s)\cdot dW_s+(r_sX^*_s+\pi^*_s\cdot\theta^\mathcal{H}_s+e_s-c^*_s+f^*(s,Y_s,Z_s))\,ds\bigr)\\
	&\hspace{0.5cm}+\frac{1}{2}\int^{t+\epsilon}_t\xi^{t,\epsilon}_sU^{(3)}_2(X^*_s+Y_s)|\pi^*_s+Z_s|^2\,ds+\int^{t+\epsilon}_tU''_2(X^*_s+Y_s)(\pi^*_s+Z^\mathcal{H}_s)\cdot\eta\,ds\displaybreak[1]\\
	&=-\left(\int^{t+\epsilon}_tU'_2(X^*_s+Y_s)\,ds\right)\,\kappa+\left(\int^{t+\epsilon}_t\bigl(U'_2(X^*_s+Y_s)\theta^\mathcal{H}_s+U''_2(X^*_s+Y_s)(\pi^*_s+Z^\mathcal{H}_s)\bigr)\,ds\right)\cdot\eta\\
	&\hspace{0.5cm}+\int^{t+\epsilon}_tU'_2(X^*_s+Y_s)\eta\cdot dW^\mathcal{H}_s+\int^{t+\epsilon}_t\xi^{t,\epsilon}_sU''_2(X^*_s+Y_s)(\pi^*_s+Z_s)\cdot dW_s.
\end{align*}
Recall that
\begin{align*}
	&U'_2(X^*+Y)=\alpha\in L^p_\mathbb{F}(\Omega;C([0,T];\mathbb{R})),\ U''_2(X^*+Y)(\pi^*+Z)=\beta\in L^p_\mathbb{F}(\Omega;L^2(0,T;\mathbb{R}^d)),\\
	&\eta\in L^\infty_{\mathcal{F}_t}(\Omega;\mathbb{R}^d),\ \xi^{t,\epsilon}\in\bigcap_{\gamma\geq1}L^\gamma_\mathbb{F}(\Omega;C([t,T];\mathbb{R})).
\end{align*}
By the same arguments as above, we can show that
\begin{equation*}
	\mathbb{E}_t\left[\int^{t+\epsilon}_tU'_2(X^*_s+Y_s)\eta\cdot dW^\mathcal{H}_s+\int^{t+\epsilon}_t\xi^{t,\epsilon}_sU''_2(X^*_s+Y_s)(\pi^*_s+Z_s)\cdot dW_s\right]=0.
\end{equation*}
Consequently, we get
\begin{align}\label{duality}
	\nonumber\mathbb{E}_t[U'_2(X^*_T+E)\xi^{t,\epsilon}_T]=&-\mathbb{E}_t\left[\int^{t+\epsilon}_tU'_2(X^*_s+Y_s)\,ds\right]\,\kappa\\
	&+\mathbb{E}_t\left[\int^{t+\epsilon}_t\bigl(U'_2(X^*_s+Y_s)\theta^\mathcal{H}_s+U''_2(X^*_s+Y_s)(\pi^*_s+Z^\mathcal{H}_s)\bigr)\,ds\right]\cdot\eta.
\end{align}
Therefore, by (\ref{variational equality'}) and (\ref{duality}), we obtain the equality (\ref{variational equality}).
\end{proof}

We want to calculate the limit of the right hand side of (\ref{variational equality}) when $\epsilon$ tends to zero. To do so, we use the following lemma which was proved by Wang~\cite{a_Wang_18}.

\begin{lemm}\label{lemma: Wang}
If $P=(P^1,\dots,P^m)^\top\in L^\gamma_\mathbb{F}(0,T;\mathbb{R}^m)$ with $m\in\mathbb{N}$ and $\gamma>1$, then for a.e.\,$t\in[0,T)$, there exists a sequence $\{\epsilon^t_n\}_{n\in\mathbb{N}}\subset(0,T-t)$ depending on $t$ such that $\lim_{n\to\infty}\epsilon^t_n=0$ and
\begin{equation*}
	\lim_{n\to\infty}\frac{1}{\epsilon^t_n}\int^{t+\epsilon^t_n}_t\mathbb{E}_t\bigl[|P^i_s-P^i_t|\bigr]\,ds=0,\ i=1,\dots,m,\ \text{a.s.}
\end{equation*}
\end{lemm}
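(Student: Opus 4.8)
The plan is to reduce the statement to a Lebesgue-differentiation-type fact for $L^\gamma$ processes and then apply a diagonal argument. First I would recall the classical scalar result: if $g \in L^1(0,T)$, then for a.e. $t$ one has $\frac{1}{\epsilon}\int_t^{t+\epsilon}|g(s)-g(t)|\,ds \to 0$ as $\epsilon \downarrow 0$ (the set of such $t$ being the Lebesgue set of $g$). The subtlety here is that we must handle the conditional expectation $\mathbb{E}_t[\cdot]$ and the stochastic (rather than deterministic) nature of $P$, and we only claim convergence along a sequence $\epsilon_n^t \to 0$ rather than for the full limit $\epsilon \downarrow 0$; this weaker conclusion is exactly what lets us avoid measurable-selection difficulties.

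The key steps I would carry out are as follows. Fix $i$ and consider the map $(t,s,\omega) \mapsto \mathbb{E}_t[|P^i_s - P^i_t|]$, which after choosing a jointly measurable version makes sense for $s \ge t$. By Jensen's inequality and Fubini, $\int_0^T \mathbb{E}\big[\big(\int_t^T \mathbb{E}_t[|P^i_s|]\,ds\big)^\gamma\big]$-type quantities are controlled by $\|P^i\|_{L^\gamma_\mathbb{F}}$, so the relevant integrands are finite a.e. Then I would apply the scalar Lebesgue differentiation theorem pathwise: for each fixed $\omega$ (outside a null set) and each fixed rational-indexed conditioning, the function $s \mapsto \mathbb{E}_{t}[|P^i_s - P^i_t|](\omega)$ behaves well; more precisely, one shows that for a.e. $t$,
\begin{equation*}
	\liminf_{\epsilon \downarrow 0} \frac{1}{\epsilon}\int_t^{t+\epsilon}\mathbb{E}_t\big[|P^i_s - P^i_t|\big]\,ds = 0 \quad \text{a.s.}
\end{equation*}
From a $\liminf = 0$ statement (for each component $i = 1,\dots,m$, on a common a.e.-$t$ set obtained by intersecting finitely many full-measure sets) one extracts, for each such $t$, a sequence $\epsilon_n^t \downarrow 0$ along which all $m$ averages simultaneously converge to $0$ a.s. — first extract a subsequence for $i=1$, then a further subsequence for $i=2$, and so on through $i = m$; since $m$ is finite this terminates. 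That yields the claimed $\{\epsilon_n^t\}_{n\in\mathbb{N}} \subset (0,T-t)$.

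The main obstacle I anticipate is the measure-theoretic bookkeeping needed to make the pathwise Lebesgue differentiation argument rigorous in the presence of conditional expectations: one must fix a jointly measurable (in $(t,s,\omega)$) version of $\mathbb{E}_t[|P^i_s - P^i_t|]$, argue that for a.e. $(t,\omega)$ the slice $s \mapsto$ (that quantity) is integrable on $(t,T)$, and then invoke the one-dimensional differentiation theorem along that slice. An alternative — and this is presumably how Wang~\cite{a_Wang_18} proceeds — is to avoid pathwise slicing: define $\phi_n(t) := \mathbb{E}\big[\big|\frac{1}{\epsilon}\int_t^{t+\epsilon}\mathbb{E}_t[|P^i_s - P^i_t|]\,ds\big|\big]$ for a fixed null sequence $\epsilon = 1/k$ of step sizes, use $L^\gamma$-continuity of translation together with a density argument (approximating $P^i$ by continuous processes) to show $\int_0^T \phi_n \to 0$, pass to an a.e.-convergent subsequence by Fatou/Borel–Cantelli, and finally diagonalize over the finitely many components. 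Either route is essentially routine once set up, so I would cite~\cite{a_Wang_18} for the proof and only sketch the diagonalization over $i = 1,\dots,m$, which is the one genuinely new (though trivial) ingredient compared to the scalar case.
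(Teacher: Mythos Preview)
Your proposal is correct and matches the paper's approach: the paper does not give an independent proof but simply refers to the proof of Lemma~3.3 in Wang~\cite{a_Wang_18}, which is exactly what you conclude you would do. Your additional sketch of the underlying argument (Lebesgue differentiation plus a finite diagonalization over $i=1,\dots,m$) is consistent with that reference and goes beyond what the paper itself provides.
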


\begin{proof}
See the proof of Lemma~3.3 in \cite{a_Wang_18}.
\end{proof}


\begin{lemm}\label{lemma: limit1}
For any $(c^*,\pi^*)\in\Pi^{x,p}_1$, consider the pair $(Y,Z)$ in Lemma~\ref{lemma: variational equality}. Then there exists a measurable set $E_1\subset[0,T)$ with $\mathrm{Leb}([0,T]\setminus E_1)=0$ such that, for any $t\in E_1$, there exists a sequence $\{\epsilon^t_n\}_{n\in\mathbb{N}}\subset(0,T-t)$ satisfying $\lim_{n\to\infty}\epsilon^t_n=0$ and, a.s.,
\begin{align*}
	&\lim_{n\to\infty}\frac{1}{\epsilon^t_n}\mathbb{E}_t\left[\int^{t+\epsilon^t_n}_t\bigl(\lambda_1(t,s)U'_1(c^*_s)-\lambda_2(t,T)U'_2(X^*_s+Y_s)\bigr)\,ds\right]\\
	&\hspace{1cm}=U'_1(c^*_t)-\lambda_2(t,T)U'_2(X^*_t+Y_t)\displaybreak[1]
	\shortintertext{and}
	&\lim_{n\to\infty}\frac{1}{\epsilon^t_n}\mathbb{E}_t\left[\int^{t+\epsilon^t_n}_t\bigl(U'_2(X^*_s+Y_s)\theta^\mathcal{H}_s+U''_2(X^*_s+Y_s)(\pi^*_s+Z^\mathcal{H}_s)\bigr)\,ds\right]\\
	&\hspace{1cm}=U'_2(X^*_t+Y_t)\theta^\mathcal{H}_t+U''_2(X^*_t+Y_t)(\pi^*_t+Z^\mathcal{H}_t).
\end{align*}
\end{lemm}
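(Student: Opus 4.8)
The plan is to combine Lemma~\ref{lemma: Wang} with the integrability properties already established for the integrands appearing in Lemma~\ref{lemma: variational equality}. Concretely, set
\[
	P^0_s:=\lambda_1(t,s)U'_1(c^*_s)-\lambda_2(t,T)U'_2(X^*_s+Y_s),\quad
	P_s:=U'_2(X^*_s+Y_s)\theta^\mathcal{H}_s+U''_2(X^*_s+Y_s)(\pi^*_s+Z^\mathcal{H}_s),
\]
and observe that the first component cannot be fed directly into Lemma~\ref{lemma: Wang} because the coefficient $\lambda_1(t,s)$ still depends on the reference time $t$; the trick is to split it as $\lambda_1(t,s)=\lambda_1(t,t)+(\lambda_1(t,s)-\lambda_1(t,t))=1+(\lambda_1(t,s)-\lambda_1(t,t))$ using $\lambda_1\in\Lambda$, so that the genuinely $t$-free process to be averaged is $U'_1(c^*)$ together with $U'_2(X^*+Y)$ and $P$, while the remainder is controlled using $|\lambda_1(t,s)-\lambda_1(t,t)|\to0$ as $s\downarrow t$ (uniform continuity of $\lambda_1$ on the compact $\Delta[0,T]$) and a crude $L^1$-bound on $U'_1(c^*)$.

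First I would verify that $U'_1(c^*)\in L^p_\mathbb{F}(0,T;\mathbb{R})\subset L^1_\mathbb{F}(0,T;\mathbb{R})$ and that the components of $P$ lie in some $L^\gamma_\mathbb{F}(0,T;\cdot)$ with $\gamma>1$: indeed $U'_2(X^*+Y)=\alpha\in L^p_\mathbb{F}(\Omega;C([0,T];\mathbb{R}))$ and $\theta^\mathcal{H}$ is bounded, so $U'_2(X^*+Y)\theta^\mathcal{H}\in L^p_\mathbb{F}(0,T;\mathbb{R}^d)$, while $U''_2(X^*+Y)(\pi^*+Z^\mathcal{H})$ is (a hedgeable-component truncation of) $\beta\in L^p_\mathbb{F}(\Omega;L^2(0,T;\mathbb{R}^d))$, hence in $L^{\min(p,2)}_\mathbb{F}(0,T;\mathbb{R}^d)$; similarly $U'_1(c^*)\in L^p_\mathbb{F}$. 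Thus the vector process collecting $U'_1(c^*)$, $U'_2(X^*+Y)$ and all coordinates of $P$ is in $L^\gamma_\mathbb{F}(0,T;\mathbb{R}^m)$ for $\gamma:=\min(p,2)>1$, and Lemma~\ref{lemma: Wang} supplies, for a.e.\ $t$, a sequence $\{\epsilon^t_n\}\subset(0,T-t)$ with $\epsilon^t_n\to0$ along which the Cesàro-type averages $\frac1{\epsilon^t_n}\int_t^{t+\epsilon^t_n}\mathbb{E}_t[\,|P^i_s-P^i_t|\,]\,ds\to0$ a.s.\ for every coordinate. Using $|\frac1\epsilon\int_t^{t+\epsilon}\mathbb{E}_t[P^i_s]\,ds-P^i_t|\le\frac1\epsilon\int_t^{t+\epsilon}\mathbb{E}_t[|P^i_s-P^i_t|]\,ds$ (and $\mathbb{E}_t[P^i_t]=P^i_t$ since $P^i_t$ is $\mathcal{F}_t$-measurable), the two claimed limits follow for the $P$-part and for the $U'_2(X^*+Y)$-part immediately, and for the $U'_1(c^*)$-part after reinstating $\lambda_1(t,s)$.

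The only genuinely delicate point is the error term generated by the $t$-dependent discount factor $\lambda_1(t,\cdot)$. Having written $\frac1\epsilon\mathbb{E}_t\!\int_t^{t+\epsilon}\lambda_1(t,s)U'_1(c^*_s)\,ds=\frac1\epsilon\mathbb{E}_t\!\int_t^{t+\epsilon}U'_1(c^*_s)\,ds+\frac1\epsilon\mathbb{E}_t\!\int_t^{t+\epsilon}(\lambda_1(t,s)-1)U'_1(c^*_s)\,ds$, the first term converges to $U'_1(c^*_t)$ by Lemma~\ref{lemma: Wang}, and the second is bounded in absolute value by $\bigl(\sup_{t\le s\le t+\epsilon}|\lambda_1(t,s)-\lambda_1(t,t)|\bigr)\cdot\frac1\epsilon\mathbb{E}_t\!\int_t^{t+\epsilon}U'_1(c^*_s)\,ds$. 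The first factor tends to $0$ as $\epsilon\downarrow0$ by continuity of $\lambda_1$, and the second factor stays bounded along $\{\epsilon^t_n\}$ because it converges to $U'_1(c^*_t)<\infty$; hence the product vanishes along the sequence. The term $-\lambda_2(t,T)U'_2(X^*_s+Y_s)$ is handled directly by Lemma~\ref{lemma: Wang} since $\lambda_2(t,T)$ is a fixed $\mathcal{F}_t$-measurable (indeed deterministic) factor. Finally I take $E_1$ to be the intersection of the full-measure set from Lemma~\ref{lemma: Wang} with the full-measure set of Lebesgue points at which these manipulations are legitimate; the common sequence $\{\epsilon^t_n\}$ works for both limits simultaneously because Lemma~\ref{lemma: Wang} is applied once to the joint vector process. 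I expect the bookkeeping around this single $\lambda_1$-error estimate to be the main (and essentially only) obstacle; everything else is a direct citation of Lemma~\ref{lemma: Wang}.
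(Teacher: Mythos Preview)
Your proposal is correct and follows essentially the same approach as the paper: split off the $t$-dependent factor $\lambda_1(t,s)$ via $\lambda_1(t,t)=1$, control the resulting error using continuity of $\lambda_1$ together with boundedness of the averaged $U'_1(c^*)$-term, and invoke Lemma~\ref{lemma: Wang} for the remaining $t$-free processes $U'_1(c^*)$, $U'_2(X^*+Y)$, $U'_2(X^*+Y)\theta^\mathcal{H}$ and $U''_2(X^*+Y)(\pi^*+Z^\mathcal{H})=\beta^\mathcal{H}$. The only cosmetic difference is the order of operations: the paper first bounds the $\lambda_1$-error in $L^1$ (using the Lebesgue differentiation theorem on $s\mapsto\mathbb{E}[U'_1(c^*_s)]$) and then extracts an a.s.\ subsequence before applying Lemma~\ref{lemma: Wang}, whereas you apply Lemma~\ref{lemma: Wang} first and use its output directly to bound the second factor in the $\lambda_1$-error, which is slightly more economical but otherwise identical.
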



\begin{proof}
Note that, for any $t\in[0,T)$ and $\epsilon\in(0,T-t)$, it holds that
\begin{align*}
	&\mathbb{E}\left[\left|\frac{1}{\epsilon}\mathbb{E}_t\left[\int^{t+\epsilon}_t\lambda_1(t,s)U'_1(c^*_s)\,ds\right]-\frac{1}{\epsilon}\mathbb{E}_t\left[\int^{t+\epsilon}_tU'_1(c^*_s)\,ds\right]\right|\right]\\
	&\leq\max_{t\leq s\leq t+\epsilon}|\lambda_1(t,s)-1|\,\frac{1}{\epsilon}\int^{t+\epsilon}_t\mathbb{E}[U'_1(c^*_s)]\,ds.
\end{align*}
Since $\lambda_1$ is continuous and $\lambda_1(t,t)=1$, we see that $\lim_{\epsilon\downarrow0}\max_{t\leq s\leq t+\epsilon}|\lambda_1(t,s)-1|=0$ for any $t\in[0,T)$. Moreover, since $\mathbb{E}[U'_1(c^*)]\in L^p(0,T;\mathbb{R})$, by the Lebesgue differentiation theorem, there exists a measurable set $\tilde{E}_1\subset[0,T)$ with $\mathrm{Leb}([0,T]\setminus\tilde{E}_1)=0$ such that, for any $t\in\tilde{E}_1$, the term $\frac{1}{\epsilon}\int^{t+\epsilon}_t\mathbb{E}[U'_1(c^*_s)]\,ds$ converges to $\mathbb{E}[U'_1(c^*_t)]<\infty$ as $\epsilon\downarrow0$. Therefore, for each $t\in\tilde{E}_1$, it holds that
\begin{equation*}
	\lim_{\epsilon\downarrow0}\mathbb{E}\left[\left|\frac{1}{\epsilon}\mathbb{E}_t\left[\int^{t+\epsilon}_t\lambda_1(t,s)U'_1(c^*_s)\,ds\right]-\frac{1}{\epsilon}\mathbb{E}_t\left[\int^{t+\epsilon}_tU'_1(c^*_s)\,ds\right]\right|\right]=0.
\end{equation*}
Hence, for such $t$, there exists a sequence $\{\epsilon^t_n\}_{n\in\mathbb{N}}\subset(0,T-t)$ such that $\lim_{n\to\infty}\epsilon^t_n=0$ and
\begin{equation*}
	\lim_{n\to\infty}\left|\frac{1}{\epsilon^t_n}\mathbb{E}_t\left[\int^{t+\epsilon^t_n}_t\lambda_1(t,s)U'_1(c^*_s)\,ds\right]-\frac{1}{\epsilon^t_n}\mathbb{E}_t\left[\int^{t+\epsilon^t_n}_tU'_1(c^*_s)\,ds\right]\right|=0\ \text{a.s.}
\end{equation*}
Moreover, since $U'_1(c^*)\in L^p_\mathbb{F}(0,T;\mathbb{R})$, $U'_2(X^*+Y)\in L^p_\mathbb{F}(\Omega;C([0,T];\mathbb{R}))$, $\theta^\mathcal{H}$ is predictable and bounded, and $U''_2(X^*+Y)(\pi^*+Z^\mathcal{H})\in L^p_\mathbb{F}(\Omega;L^2(0,T;\mathbb{R}^d))$, Lemma~\ref{lemma: Wang} yields that there exists a measurable set $E_1\subset\tilde{E}_1$ with $\mathrm{Leb}([0,T]\setminus E_1)=0$ such that, for any $t\in E_1$, there exists a subsequence of $\{\epsilon^t_n\}_{n\in\mathbb{N}}$ which we also denote by $\{\epsilon^t_n\}_{n\in\mathbb{N}}$ such that, a.s.,
\begin{align*}
	&\lim_{n\to\infty}\frac{1}{\epsilon^t_n}\mathbb{E}_t\left[\int^{t+\epsilon^t_n}_tU'_1(c^*_s)\,ds\right]=U'_1(c^*_t),\displaybreak[1]\\
	&\lim_{n\to\infty}\frac{1}{\epsilon^t_n}\mathbb{E}_t\left[\int^{t+\epsilon^t_n}_tU'_2(X^*_s+Y_s)\,ds\right]=U'_2(X^*_t+Y_t),\displaybreak[1]\\
	&\lim_{n\to\infty}\frac{1}{\epsilon^t_n}\mathbb{E}_t\left[\int^{t+\epsilon^t_n}_tU'_2(X^*_s+Y_s)\theta^\mathcal{H}_s\,ds\right]=U'_2(X^*_t+Y_t)\theta^\mathcal{H}_t,\displaybreak[1]
	\shortintertext{and}
	&\lim_{n\to\infty}\frac{1}{\epsilon^t_n}\mathbb{E}_t\left[\int^{t+\epsilon^t_n}_tU''_2(X^*_s+Y_s)(\pi^*_s+Z^\mathcal{H}_s)\,ds\right]=U''_2(X^*_t+Y_t)(\pi^*_t+Z^\mathcal{H}_t).
\end{align*}
Hence the assertions of Lemma~\ref{lemma: limit1} follow.
\end{proof}


\begin{lemm}\label{lemma: limit2}
Let $(c^*,\pi^*)\in\Pi^{x,p}_1$, $(Y,Z)$, $E_1$ and $\{\epsilon^t_n\}_{n\in\mathbb{N}}$, $t\in E_1$, be the ones in Lemma~\ref{lemma: limit1}. Fix an arbitrary $\delta>0$. If moreover $(c^*,\pi^*)\in\Pi^{x,p}_2$, then there exists a measurable set $E_2\subset E_1$ with $\mathrm{Leb}([0,T]\setminus E_2)=0$ such that, for any $t\in E_2$ and any $(\kappa,\eta)\in\chi_t$ with $|\kappa|\leq\delta$, there exists a subsequence of $\{\epsilon^t_n\}_{n\in\mathbb{N}}$ (which we also denote by $\{\epsilon^t_n\}_{n\in\mathbb{N}}$) satisfying, a.s.,
\begin{align*}
	&\limsup_{n\to\infty}\frac{1}{2\epsilon^t_n}\mathbb{E}_t\left[\int^{t+\epsilon^t_n}_t\lambda_1(t,s)\int^1_0\bigl|U''_1(c^*_s+\mu\kappa)\bigr|\,d\mu\,ds\,|\kappa|^2\right.\\
	&\hspace{3cm}\left.+\lambda_2(t,T)\int^1_0\bigl|U''_2(X^*_T+E+\mu\xi^{t,\epsilon^t_n}_T)\bigr|\,d\mu\,\bigl|\xi^{t,\epsilon^t_n}_T\bigr|^2\right]\\
	&\leq C\bigl(M_1(c^*_t;\delta)+\mathbb{E}_t\bigl[|U''_2(X^*_T+E)|^q\bigr]^{1/q}\bigr)(|\kappa|^2+|\eta|^2).
\end{align*}
Here, $C>0$ is a constant which depends only on $q,\,T,\,\|\lambda_1\|_\infty,\,\|\lambda_2\|_\infty,\,\|r\|_\infty$ and $\|\theta^\mathcal{H}\|_\infty$, and $q$ is a constant satisfying the assertions in (H2)$_{x,p}$.
\end{lemm}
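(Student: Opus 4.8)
The plan is to split the second-order (negative) term on the right-hand side of \eqref{variational equality} into its $U''_1$-part, integrated over $[t,t+\epsilon]$ and weighted by $|\kappa|^2$, and its $U''_2$-part, evaluated at time $T$ and weighted by $|\xi^{t,\epsilon}_T|^2$, and to bound the limsup of each along a suitable subsequence of $\{\epsilon^t_n\}$. Throughout we use that $\delta\mapsto M_i(x;\delta)$ is non-decreasing with $M_i(x;\delta)\downarrow|U''_i(x)|$ as $\delta\downarrow0$, together with the elementary bounds $\int^1_0|U''_1(c^*_s+\mu\kappa)|\,d\mu\leq M_1(c^*_s;|\kappa|)\leq M_1(c^*_s;\delta)$ (valid since $|\kappa|\leq\delta$) and $\int^1_0|U''_2(X^*_T+E+\mu\xi^{t,\epsilon}_T)|\,d\mu\leq M_2(X^*_T+E;|\xi^{t,\epsilon}_T|)$, and that $\kappa,\eta$ are $\mathcal{F}_t$-measurable so that $|\kappa|,|\eta|$ may be pulled out of $\mathbb{E}_t[\cdot]$.

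\emph{The $U''_1$-part.} Using $\lambda_1(t,\cdot)\leq\|\lambda_1\|_\infty$ and conditional Fubini, this contribution is dominated by $\tfrac12\|\lambda_1\|_\infty|\kappa|^2\cdot\tfrac1\epsilon\int^{t+\epsilon}_t\mathbb{E}_t[M_1(c^*_s;\delta)]\,ds$. By (H2)$_{x,p}$(i) the process $M_1(c^*_\cdot;\delta)$ belongs to $L^q_\mathbb{F}(0,T;\mathbb{R})$, so Lemma~\ref{lemma: Wang} yields a full-measure set of times $t$ along which, after passing to a subsequence of the $\{\epsilon^t_n\}$ from Lemma~\ref{lemma: limit1}, the average $\tfrac1{\epsilon^t_n}\int^{t+\epsilon^t_n}_t\mathbb{E}_t[M_1(c^*_s;\delta)]\,ds$ converges to $M_1(c^*_t;\delta)$ a.s.\ We take $E_2$ to be $E_1$ intersected with this set and re-label the subsequence as $\{\epsilon^t_n\}$; note $E_2$ may depend on $\delta$ but not on $(\kappa,\eta)$.

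\emph{The $U''_2$-part.} After the pointwise bound above and $\lambda_2(t,T)\leq\|\lambda_2\|_\infty$, the conditional Hölder inequality with exponents $q$ and its conjugate $q'$ gives
\[
\frac1\epsilon\mathbb{E}_t\bigl[M_2(X^*_T+E;|\xi^{t,\epsilon}_T|)\,|\xi^{t,\epsilon}_T|^2\bigr]\leq\frac1\epsilon\,\mathbb{E}_t\bigl[M_2(X^*_T+E;|\xi^{t,\epsilon}_T|)^q\bigr]^{1/q}\,\mathbb{E}_t\bigl[|\xi^{t,\epsilon}_T|^{2q'}\bigr]^{1/q'}.
\]
By Lemma~\ref{lemma: perturbation SDE}(ii) with $\gamma=q'$ one has $\mathbb{E}_t[|\xi^{t,\epsilon}_T|^{2q'}]^{1/q'}\leq C_{q'}\epsilon(|\kappa|^2+|\eta|^2)$, so the factor $1/\epsilon$ is absorbed and the left-hand side is at most $C_{q'}(|\kappa|^2+|\eta|^2)\,\mathbb{E}_t[M_2(X^*_T+E;|\xi^{t,\epsilon}_T|)^q]^{1/q}$. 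Since $\mathbb{E}[\sup_{t\leq s\leq T}|\xi^{t,\epsilon}_s|^2]\to0$ as $\epsilon\downarrow0$ (again Lemma~\ref{lemma: perturbation SDE}(ii)), we pass to a further subsequence of $\{\epsilon^t_n\}$ — now allowed to depend on $(\kappa,\eta)$ — along which $|\xi^{t,\epsilon^t_n}_T|\to0$ a.s., whence $M_2(X^*_T+E;|\xi^{t,\epsilon^t_n}_T|)^q\to|U''_2(X^*_T+E)|^q$ a.s.\ by continuity. Hypothesis (H2)$_{x,p}$(ii) makes $\{M_2(X^*_T+E;|\xi^{t,\epsilon}_T|)^q\}_{\epsilon\in(0,T-t)}$ uniformly integrable, so the convergence holds in $L^1$ as well; hence $\mathbb{E}_t[M_2(X^*_T+E;|\xi^{t,\epsilon^t_n}_T|)^q]\to\mathbb{E}_t[|U''_2(X^*_T+E)|^q]$ in $L^1$ and, after one more extraction, a.s.

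Combining the two parts, pulling $\lambda_2(t,T)$ and the constants $q,T,\|\lambda_1\|_\infty,\|\lambda_2\|_\infty,\|r\|_\infty,\|\theta^\mathcal{H}\|_\infty$ into a single $C$, and using $a\leq a+b$ for the nonnegative quantities $a=M_1(c^*_t;\delta)$ and $b=\mathbb{E}_t[|U''_2(X^*_T+E)|^q]^{1/q}$, yields the claimed inequality. The delicate point is the $U''_2$-part: one must exploit the precise $O(\epsilon)$ scaling of $\mathbb{E}_t[|\xi^{t,\epsilon}_T|^{2q'}]^{1/q'}$ (this is exactly why Lemma~\ref{lemma: perturbation SDE} records the $2\gamma$-moment with that power of $\epsilon$) in order to kill the $1/\epsilon$, and then justify interchanging the limit $\epsilon\downarrow0$ with the conditional expectation of $M_2(X^*_T+E;|\xi^{t,\epsilon}_T|)^q$ — which is precisely the role of the uniform integrability hypothesis (H2)$_{x,p}$(ii), mere a.s.\ convergence being insufficient.
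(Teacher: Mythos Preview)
Your proof is correct and follows essentially the same approach as the paper's: the same $M_1$/$M_2$ majorization, the same application of Lemma~\ref{lemma: Wang} to $M_1(c^*_\cdot;\delta)\in L^q_\mathbb{F}(0,T;\mathbb{R})$ to construct $E_2$, the same conditional H\"older step with exponents $q,q'$ combined with Lemma~\ref{lemma: perturbation SDE}(ii) at $\gamma=q'$ to absorb the $1/\epsilon$, and the same use of (H2)$_{x,p}$(ii) to pass to the limit in $\mathbb{E}_t[M_2(\cdot)^q]$. Your explicit extraction of one further subsequence to upgrade the $L^1$ convergence of the conditional expectations to a.s.\ convergence is a point the paper leaves implicit.
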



\begin{proof}
Assume that $(c^*,\pi^*)\in\Pi^{x,p}_2$ and let $q>1$ be a constant satisfying the assertions in (H2)$_{x,p}$. Fix an arbitrary $\delta>0$. Note that, for any $t\in[0,T)$, $(\kappa,\eta)\in\chi_t$ with $|\kappa|\leq\delta$ and $\epsilon\in(0,T-t)$, it holds that
\begin{align}\label{estimate of the second order}
	\nonumber&\frac{1}{2\epsilon}\mathbb{E}_t\left[\int^{t+\epsilon}_t\lambda_1(t,s)\int^1_0\bigl|U''_1(c^*_s+\mu\kappa)\bigr|\,d\mu\,ds\,|\kappa|^2+\lambda_2(t,T)\int^1_0\bigl|U''_2(X^*_T+E+\mu\xi^{t,\epsilon}_T)\bigr|\,d\mu\,\bigl|\xi^{t,\epsilon}_T\bigr|^2\right]\\
	&\leq\frac{\|\lambda_1\|_\infty}{2}\left(\frac{1}{\epsilon}\mathbb{E}_t\left[\int^{t+\epsilon}_tM_1(c^*_s;\delta)\,ds\right]\right)\,|\kappa|^2+\frac{\|\lambda_2\|_\infty}{2}\frac{1}{\epsilon}\mathbb{E}_t\Bigl[M_2\bigl(X^*_T+E;\bigl|\xi^{t,\epsilon}_T\bigr|\bigr)\,\bigl|\xi^{t,\epsilon}_T\bigr|^2\Bigr]\ \text{a.s.},
\end{align}
where $M_i$, $i=1,2$, are defined by (\ref{definition of M}). Since $M_1(c^*;\delta)\in L^q_\mathbb{F}(0,T;\mathbb{R})$, by Lemma~\ref{lemma: Wang}, there exists a measurable subset $E_2\subset E_1$ with $\mathrm{Leb}([0,T]\setminus E_2)=0$ such that, for any $t\in E_2$, there exists a subsequence of $\{\epsilon^t_n\}_{n\in\mathbb{N}}$ (which we also denote by $\{\epsilon^t_n\}_{n\in\mathbb{N}}$) such that
\begin{equation}\label{limit of M_1}
	\lim_{n\to\infty}\frac{1}{\epsilon^t_n}\mathbb{E}_t\left[\int^{t+\epsilon^t_n}_tM_1(c^*_s;\delta)\,ds\right]=M_1(c^*_t;\delta)\ \text{a.s.}
\end{equation}
Fix $t\in E_2$ and let $(\kappa,\eta)\in\chi_t$ with $|\kappa|\leq\delta$. Denote $|\xi^{t,\epsilon^t_n,\kappa,\eta}|$ by $\xi_n$ for each $n\in\mathbb{N}$. Then, by Lemma~\ref{lemma: perturbation SDE}~(\rnum{2}), it holds that
\begin{equation*}
	\mathbb{E}_t\bigl[\xi_n^{2\gamma}\bigr]\leq\bigl(C_\gamma\epsilon^t_n(|\kappa|^2+|\eta|^2)\bigr)^\gamma\ \text{a.s.}
\end{equation*}
for any $n\in\mathbb{N}$ and any $\gamma>1$, where $C_\gamma>0$ is a constant which depends only on $\gamma,\,T,\,\|r\|_\infty$ and $\|\theta^\mathcal{H}\|_\infty$. Hence, by H\"{o}lder's inequality, we obtain
\begin{align}\label{estimate of M_2}
	\nonumber\frac{1}{\epsilon^t_n}\mathbb{E}_t\Bigl[M_2\bigl(X^*_T+E;\bigl|\xi^{t,\epsilon^t_n}_T\bigr|\bigr)\,\bigl|\xi^{t,\epsilon^t_n}_T\bigr|^2\Bigr]&\leq\frac{1}{\epsilon^t_n}\mathbb{E}_t\bigl[M_2(X^*_T+E;\xi_n)^q\bigr]^{1/q}\mathbb{E}_t\bigl[\xi_n^{2q/(q-1)}\bigr]^{(q-1)/q}\\
	&\leq C_{q/(q-1)}\mathbb{E}_t\bigl[M_2(X^*_T+E;\xi_n)^q\bigr]^{1/q}(|\kappa|^2+|\eta|^2).
\end{align}
Note that, by considering a subsequence of $\{\epsilon^t_n\}_{n\in\mathbb{N}}$ (which may depend on $(\kappa,\eta)$), we can assume without loss of generality that $\lim_{n\to\infty}\xi_n=0$ a.s., and hence
\begin{equation*}
	\lim_{n\to\infty}M_2(X^*_T+E;\xi_n)=M_2(X^*_T+E;0)=|U''_2(X^*_T+E)|\ \text{a.s.}
\end{equation*}
Since the sequence $\{M_2(X^*_T+E;\xi_n)^q\}_{n\in\mathbb{N}}$ is uniformly integrable by the condition (\rnum{2}) in (H2)$_{x,p}$, we obtain
\begin{equation}\label{limit of M_2}
	\lim_{n\to\infty}\mathbb{E}_t\bigl[M_2(X^*_T+E;\xi_n)^q\bigr]=\mathbb{E}_t\bigl[|U''_2(X^*_T+E)|^q\bigr]\ \text{a.s.}
\end{equation}
By (\ref{estimate of the second order}), (\ref{limit of M_1}), (\ref{estimate of M_2}) and (\ref{limit of M_2}), we obtain the assertion of Lemma~\ref{lemma: limit2}.
\end{proof}


\begin{proof}[Proof of Theorem~\ref{theorem: necessary condition}]
Assume that $(c^*,\pi^*)\in\Pi^{x,p}_2$ is an open-loop equilibrium pair. Fix a constant $\delta>0$, and take an arbitrary $t$ from the set $E_2$ obtained in Lemma~\ref{lemma: limit2}. Let $\{\delta_m\}_{m\in\mathbb{N}}$ be a sequence such that $0<\delta_m\leq\delta$,\ $m\in\mathbb{N}$, and $\lim_{m\to\infty}\delta_m=0$. For each $m\in\mathbb{N}$, define $(\kappa_m,\eta_m)\in\chi_t$ by
\begin{equation*}
	\begin{cases}
		\kappa_m=\delta_m\1_{\{U'_1(c^*_t)-\lambda_2(t,T)U'_2(X^*_t+Y_t)>0\}},\\
		\eta_m=0.
	\end{cases}
\end{equation*}
Denote by $\{\epsilon^t_{n,m}\}_{n\in\mathbb{N}}$ the sequence corresponding to $t\in E_2$ and $(\kappa_m,\eta_m)\in\chi_t$, and let $c^{n,m}=c^{t,\epsilon^t_{n,m},\kappa_m}$, $\pi^{n,m}=\pi^{t,\epsilon^t_{n,m},\eta_m}$, for $n,\,m\in\mathbb{N}$. Then, by the definition of the open-loop equilibrium pair and Lemmas~\ref{lemma: variational equality}, \ref{lemma: limit1}, \ref{lemma: limit2}, we obtain, for each $m\in\mathbb{N}$,
\begin{align*}
	0&\geq\limsup_{n\to\infty}\frac{R(c^{n,m},\pi^{n,m};t,X^*_t)-R(c^*,\pi^*;t,X^*_t)}{\epsilon^t_{n,m}}\\
	&\geq\bigl(U'_1(c^*_t)-\lambda_2(t,T)U'_2(X^*_t+Y_t)\bigr)\1_{\{U'_1(c^*_t)-\lambda_2(t,T)U'_2(X^*_t+Y_t)>0\}}\delta_m\\
	&\hspace{1cm}-C\bigl(M_1(c^*_t;\delta)+\mathbb{E}_t\bigl[|U''_2(X^*_T+E)|^q\bigr]^{1/q}\bigr)\delta^2_m\ \text{a.s.}
\end{align*}
Dividing both sides of the above inequality by $\delta_m$ and then letting $m\to\infty$, we obtain
\begin{equation*}
	\bigl(U'_1(c^*_t)-\lambda_2(t,T)U'_2(X^*_t+Y_t)\bigr)\1_{\{U'_1(c^*_t)-\lambda_2(t,T)U'_2(X^*_t+Y_t)>0\}}\leq0\ \text{a.s.},
\end{equation*}
and hence
\begin{equation*}
	U'_1(c^*_t)-\lambda_2(t,T)U'_2(X^*_t+Y_t)\leq0\ \text{a.s.}
\end{equation*}
Next, define $(\kappa_m,\eta_m)\in\chi_t$ by
\begin{equation*}
	\begin{cases}
		\kappa_m=-\delta_m\1_{\{U'_1(c^*_t)-\lambda_2(t,T)U'_2(X^*_t+Y_t)<0\}},\\
		\eta_m=0,
	\end{cases}
\end{equation*}
for each $m\in\mathbb{N}$. By the same arguments as above, we can show that
\begin{equation*}
	U'_1(c^*_t)-\lambda_2(t,T)U'_2(X^*_t+Y_t)\geq0\ \text{a.s.}
\end{equation*}
Consequently, we obtain
\begin{equation*}
	U'_1(c^*_t)-\lambda_2(t,T)U'_2(X^*_t+Y_t)=0\ \text{a.s.},
\end{equation*}
showing that $c^*_t=(U'_1)^{-1}(\lambda_2(t,T)U'_2(X^*_t+Y_t))$ a.s.
\par
Similarly, by considering
\begin{align*}
	&\begin{cases}
		\kappa_m=0,\\
		\eta_m=\delta_m\1_{\{U'_2(X^*_t+Y_t)\theta^i_t+U''_2(X^*_t+Y_t)(\pi^{*,i}_t+Z^i_t)>0\}}e_i,
	\end{cases}
	m\in\mathbb{N},\ i=1,\dots,d_1,\displaybreak[1]
	\shortintertext{and}
	&\begin{cases}
		\kappa_m=0,\\
		\eta_m=-\delta_m\1_{\{U'_2(X^*_t+Y_t)\theta^i_t+U''_2(X^*_t+Y_t)(\pi^{*,i}_t+Z^i_t)<0\}}e_i,
	\end{cases}
	m\in\mathbb{N},\ i=1,\dots,d_1,
\end{align*}
where $e_i$ denotes the $i$\,th standard basis of $\mathbb{R}^d$ for $i=1,\dots,d$, we can show that
\begin{equation*}
	U'_2(X^*_t+Y_t)\theta^i_t+U''_2(X^*_t+Y_t)(\pi^{*,i}_t+Z^i_t)=0\ \text{a.s.},\ i=1,\dots,d_1.
\end{equation*}
Hence we get $\pi^*_t=-\frac{U'_2}{U''_2}(X^*_t+Y_t)\theta^\mathcal{H}_t-Z^\mathcal{H}_t$ a.s. Since $t\in E_2$ is arbitrary and $\mathrm{Leb}([0,T]\setminus E_2)=0$, the equalities in (\ref{equilibrium condition}) hold.
\par
Lastly, by inserting the representation~(\ref{equilibrium condition}) into SDE~(\ref{wealth SDE}) and BSDE~(\ref{BSDE for (Y,Z)}), we see that the triplet $(X,Y,Z):=(X^*,Y,Z)$ satisfies FBSDE~(\ref{FBSDE}). This completes the proof of Theorem~\ref{theorem: necessary condition}.
\end{proof}


\begin{rem}
The resulting FBSDE~(\ref{FBSDE}) is of a closed form in the sense that the coefficients of the equation are determined only by the model and independent of the choice of $(c^*,\pi^*)$. Note also that, if $d_1<d$, i.e., if the market is incomplete, then the generator of the backward equation in FBSDE~(\ref{FBSDE}) has quadratic growth with respect to $Z^\mathcal{O}=(0,\dots,0,Z^{d_1+1},\dots,Z^d)^\top$. On the other hand, If $d_1=d$, i.e., if the market is complete, then the term $|Z^\mathcal{O}_s|^2$ vanishes and all the coefficients of FBSDE~(\ref{FBSDE}) become linear with respect to $Z$.
\end{rem}


\begin{rem}\label{remark: maximum principle}
By the arguments in the proof of Lemma~\ref{lemma: variational equality}, we see that the pair of adapted processes $(Y,Z)$ corresponding to $(c^*,\pi^*)\in\Pi^{x,p}_1$ satisfies $Y=(U'_2)^{-1}(\alpha)-X^*$ and $Z=\frac{1}{U''_2(X^*+Y)}\beta-\pi^*$, where $(\alpha,\beta)\in L^p_\mathbb{F}(\Omega;C([0,T];\mathbb{R}))\times L^p_\mathbb{F}(\Omega;L^2(0,T;\mathbb{R}^d))$ is the unique adapted solution of BSDE~(\ref{duality BSDE}). Therefore, if $(c^*,\pi^*)\in\Pi^{x,p}_2$ is an open-loop equilibrium pair, then by the condition~(\ref{equilibrium condition}) we obtain
\begin{equation}\label{equilibrium condition'}
	\begin{cases}
		U'_1(c^*_s)-\lambda_2(s,T)\alpha_s=0,\\
		\alpha_s\theta^\mathcal{H}_s+\beta^\mathcal{H}_s=0,
	\end{cases}
	\text{a.s. for a.e.}\,s\in[0,T].
\end{equation}
Moreover, if we define $(p,q):\Omega\times\Delta[0,T]\to\mathbb{R}\times\mathbb{R}^d$ by $p^t_s:=\lambda_2(t,T)\alpha_s$ and $q^t_s:=\lambda_2(t,T)\beta_s$ for $(t,s)\in\Delta[0,T]$, then, for each $t\in[0,T]$, $(p^t,q^t)\in L^p_\mathbb{F}(\Omega;C([t,T];\mathbb{R}))\times L^p_\mathbb{F}(\Omega;L^2(t,T;\mathbb{R}^d))$ satisfies the BSDE
\begin{equation}\label{flow of adjoint equations}
	\begin{cases}
		dp^t_s=-r_sp^t_s\,ds+q^t_s\cdot dW_s,\ s\in[t,T],\\
		p^t_T=\lambda_2(t,T)U'_2(X^*_T+E),
	\end{cases}
\end{equation}
and the condition (\ref{equilibrium condition'}) becomes
\begin{equation}\label{equilibrium condition''}
	\begin{cases}
		U'_1(c^*_s)-p^s_s=0,\\
		p^s_s\theta^\mathcal{H}_s+(q^s_s)^\mathcal{H}=0,
	\end{cases}
	\text{a.s. for a.e.}\,s\in[0,T].
\end{equation}
This consequence generalizes the result obtained by Alia et al.~\cite{a_Alia-_17} to an incomplete market setting. The random field $(p,q)$ satisfies a ``flow'' of BSDEs (\ref{flow of adjoint equations}) with the additional condition (\ref{equilibrium condition''}) imposed on the diagonal terms $(p^s_s,q^s_s)$. Note that the flow of BSDEs (\ref{flow of adjoint equations}) is a system of adjoint equations parametrized by $t\in[0,T]$ in the spirit of the duality method for time-inconsistent stochastic control problems; see \cite{a_Hu-Jin-Zhou_12,a_Hu-Jin-Zhou_17,a_Alia-_17,a_Yong_17,a_Yan-Yong_19}. However, unlike FBSDE~(\ref{FBSDE}), we cannot obtain an equation of a closed form directly by the system consisting of (\ref{wealth SDE}), (\ref{flow of adjoint equations}) and (\ref{equilibrium condition''}), since the condition~(\ref{equilibrium condition''}) does not give an expression for the investment process $\pi^*$. Hence, in our problem, the necessary condition stated in Theorem~\ref{theorem: necessary condition} has more information for the structure of an open-loop equilibrium pair than the consequence obtained by the duality method.
\end{rem}


\section{A verification theorem}\label{section: verification theorem}

\paragraph{}
\ \,In Section \ref{section: necessary condition}, we proved that, if there exists an open-loop equilibrium pair in the set $\Pi^{x,p}_2$, then FBSDE~(\ref{FBSDE}) has an adapted solution. In this section, we prove the inverse direction. In other words, we provide a verification theorem for an open-loop equilibrium pair. Throughout this section, we fix $U_1,U_2\in\mathbb{U}$ and $\lambda_1,\lambda_2\in\Lambda$.


\begin{theo}\label{theorem: verification}
Fix $x\in\mathbb{R}$ and $p>1$. Suppose that there exists a triplet $(X,Y,Z)$ such that:
\begin{enumerate}
\renewcommand{\labelenumi}{(\roman{enumi})}
\item
$X$ and $Y$ are $\mathbb{R}$-valued continuous adapted processes, and $Z$ is an $\mathbb{R}^d$-valued predictable process satisfying $\int^T_0|Z_s|^2\,ds<\infty$ a.s.;
\item
$U'_2(X+Y)\in L^p_\mathbb{F}(\Omega;C([0,T];\mathbb{R}))$ and
\begin{equation*}
	\mathbb{E}\left[\int^T_0\bigl|U_1\bigl((U'_1)^{-1}(\lambda_2(s,T)U'_2(X_s+Y_s))\bigr)\bigr|\,ds+|U_2(X_T+E)|\right]<\infty;
\end{equation*}
\item
$(X,Y,Z)$ satisfies FBSDE (\ref{FBSDE}).
\end{enumerate}
Define $(c^*,\pi^*)$ by
\begin{equation}\label{equilibrium condition2}
	\begin{cases}
		c^*_s=(U'_1)^{-1}(\lambda_2(s,T)U'_2(X_s+Y_s)),\\
		\pi^*_s=-\frac{U'_2}{U''_2}(X_s+Y_s)\theta^\mathcal{H}_s-Z^\mathcal{H}_s,
	\end{cases}
	\ s\in[0,T].
\end{equation}
Then $(c^*,\pi^*)\in\Pi^{x,p}_1$, and it is an open-loop equilibrium pair for the initial wealth $x$ with the corresponding wealth process $X^{(c^*,\pi^*,0,x)}=X$.
\end{theo}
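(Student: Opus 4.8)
The plan is to run the argument of Section~\ref{section: necessary condition} in reverse. First I would check the membership $(c^*,\pi^*)\in\Pi^{x,p}_1$: by definition~\eqref{equilibrium condition2} the consumption $c^*$ is predictable and, since $U_2'(X+Y)\in L^p_\mathbb{F}(\Omega;C([0,T];\mathbb{R}))$, the process $\pi^*=-\frac{U'_2}{U''_2}(X+Y)\theta^\mathcal{H}-Z^\mathcal{H}$ is predictable with $\int^T_0|\pi^*_s|^2\,ds<\infty$ a.s.\ (using boundedness of $\theta$, condition~(i) on $Z$, and that $U'_2/U''_2$ is continuous hence locally bounded along the continuous path $X+Y$); then I would verify that the forward component of FBSDE~\eqref{FBSDE} is precisely the wealth SDE~\eqref{wealth SDE} driven by this $(c^*,\pi^*)$, so that $X^{(c^*,\pi^*,0,x)}=X$ by uniqueness of the wealth SDE. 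The integrability requirement in (H0)$_x$ for $U_1(c^*)$ and $U_2(X_T+E)$ is exactly hypothesis~(ii), and the $L^p$-integrability of $U'_1(c^*)=\lambda_2(\cdot,T)U'_2(X+Y)$ and $U'_2(X_T+E)$ follows from $U'_2(X+Y)\in L^p_\mathbb{F}(\Omega;C([0,T];\mathbb{R}))$ together with boundedness of $\lambda_2$ and, for the terminal term, the identity $X_T+E=X_T+Y_T$; this gives (H1)$_{x,p}$.

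Next I would establish the equilibrium inequality. For fixed $t\in[0,T)$ and $(\kappa,\eta)\in\chi_t$, let $\xi^{t,\epsilon}$ solve~\eqref{perturbation SDE} and set $X^{t,\epsilon}:=X^*+\xi^{t,\epsilon}$; this is the wealth process for the perturbed pair $(c^{t,\epsilon},\pi^{t,\epsilon})$. The key point is that the pair $(Y,Z)$ supplied by hypothesis solves BSDE~\eqref{BSDE for (Y,Z)} with generator $f^*$ built from $(X^*,c^*,\pi^*)$, so I can introduce the auxiliary process $\alpha:=U'_2(X^*+Y)$ and, by It\^o's formula applied to $\alpha=U'_2(X^*+Y)$ using the forward and backward equations, recover that $\alpha$ satisfies the linear BSDE~\eqref{duality BSDE}, i.e.\ $d\alpha_s=-r_s\alpha_s\,ds+\beta_s\cdot dW_s$ with $\alpha_T=U'_2(X_T+E)$ and $\beta:=U''_2(X^*+Y)(\pi^*+Z)$; moreover the equilibrium relations~\eqref{equilibrium condition2} are designed exactly so that $U'_1(c^*_s)=\lambda_2(s,T)\alpha_s$ and $\alpha_s\theta^\mathcal{H}_s+\beta^\mathcal{H}_s=0$. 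Then, reproducing the computation in the proof of Lemma~\ref{lemma: variational equality} (second-order Taylor expansion of $U_1$ and $U_2$, It\^o's formula for $\alpha\xi^{t,\epsilon}$ and for $U'_2(X^*+Y)\xi^{t,\epsilon}$, and the martingale property of the resulting stochastic integrals, which holds by the same H\"older estimate using $p>1$, Lemma~\ref{lemma: perturbation SDE}, and $\beta\in L^p_\mathbb{F}(\Omega;L^2(0,T;\mathbb{R}^d))$), the variational identity~\eqref{variational equality} holds, and the first-order terms in $\kappa$ and $\eta$ vanish identically because of the two equilibrium relations just noted. Hence
\begin{align*}
	&\frac{R(c^{t,\epsilon},\pi^{t,\epsilon};t,X^*_t)-R(c^*,\pi^*;t,X^*_t)}{\epsilon}\\
	&=-\frac{1}{2\epsilon}\mathbb{E}_t\left[\int^{t+\epsilon}_t\lambda_1(t,s)\int^1_0\bigl|U''_1(c^*_s+\mu\kappa)\bigr|\,d\mu\,ds\,|\kappa|^2+\lambda_2(t,T)\int^1_0\bigl|U''_2(X^*_T+E+\mu\xi^{t,\epsilon}_T)\bigr|\,d\mu\,\bigl|\xi^{t,\epsilon}_T\bigr|^2\right]\le 0,
\end{align*}
since $U''_1,U''_2<0$ and the integrands are nonnegative; dividing by $\epsilon>0$ and taking $\limsup_{\epsilon\downarrow0}$ gives the required inequality a.s., so $(c^*,\pi^*)$ is an open-loop equilibrium pair.

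The main obstacle is the rigorous justification that all stochastic integrals appearing in the It\^o expansions are true martingales under only the stated integrability (in particular that $\mathbb{E}_t$ of each vanishes), and that the It\^o formula applications are legitimate given that $X^*$, $Y$ and the various ratios of derivatives of $U_2$ need not be bounded; this is handled exactly as in Lemma~\ref{lemma: variational equality}, exploiting $\xi^{t,\epsilon}\in\bigcap_{\gamma\ge1}L^\gamma_\mathbb{F}(\Omega;C([t,T];\mathbb{R}))$ from Lemma~\ref{lemma: perturbation SDE}(i), the $L^p$-regularity of $\alpha=U'_2(X^*+Y)$ and $\beta$, and boundedness of $\lambda_1,\lambda_2,r,\theta$, with the conjugate exponent $p'$ of $p$ used in the H\"older step. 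A secondary point worth spelling out is that no analogue of condition (H2)$_{x,p}$ is needed here: unlike in the necessary direction, we are not passing to the limit inside the second-order term but merely discarding it via its sign, so the verification theorem only requires $(c^*,\pi^*)\in\Pi^{x,p}_1$.
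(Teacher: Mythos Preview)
Your membership argument and the overall structure are fine, but there is a genuine gap in the core step. You claim that the first-order $\kappa$ term in the variational identity~\eqref{variational equality} vanishes identically thanks to the relation $U'_1(c^*_s)=\lambda_2(s,T)\alpha_s$. It does not: substituting this relation, the coefficient of $\kappa$ becomes
\[
\frac{1}{\epsilon}\,\mathbb{E}_t\!\left[\int^{t+\epsilon}_t\bigl(\lambda_1(t,s)\lambda_2(s,T)-\lambda_2(t,T)\bigr)\alpha_s\,ds\right],
\]
and the factor $\lambda_1(t,s)\lambda_2(s,T)-\lambda_2(t,T)$ is zero only at $s=t$ (where $\lambda_1(t,t)=1$), not for all $s\in[t,t+\epsilon)$. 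This is precisely the manifestation of time-inconsistency: the first-order condition for consumption at running time $s$ does not match the first-order condition at $s$ viewed from the earlier reference time $t$, under a general discount function. Consequently your displayed identity retaining only the second-order term is false. (The $\eta$ term does vanish identically via $\alpha_s\theta^\mathcal{H}_s+\beta^\mathcal{H}_s=0$, as you say.)

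The repair is easy: bound the residual $\kappa$ term by
\[
\max_{t\le s\le t+\epsilon}\bigl|\lambda_1(t,s)\lambda_2(s,T)-\lambda_2(t,T)\bigr|\;\mathbb{E}_t\!\Bigl[\sup_{t\le s\le T}|\alpha_s|\Bigr]\,|\kappa|,
\]
which tends to $0$ as $\epsilon\downarrow0$ by continuity of $\lambda_1,\lambda_2$ and the a.s.\ finiteness of $\mathbb{E}_t[\sup_s|\alpha_s|]$ coming from $\alpha\in L^p_\mathbb{F}(\Omega;C([0,T];\mathbb{R}))$; combined with the nonpositivity of your second-order remainder, the $\limsup$ inequality follows. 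The paper takes a slightly cleaner route: it uses concavity of $U_1,U_2$ from the outset to obtain an upper bound involving only first-order terms, so there is no second-order remainder to carry at all, and then shows the surviving $\kappa$ term tends to zero by exactly the continuity argument above. Your closing remark that (H2)$_{x,p}$ is not needed here is correct and survives the fix.
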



\begin{proof}
Clearly the pair of predictable processes $(c^*,\pi^*)$ defined by (\ref{equilibrium condition2}) satisfies $\int^T_0(|c^*_s|+|\pi^*_s|^2)\,ds<\infty$ a.s., and it holds that $\pi^*=(\pi^{*,1},\dots,\pi^{*,d_1},0,\dots,0)^\top$. The continuous process $X$ satisfies SDE~(\ref{wealth SDE}) with the initial value $x$ at time zero and $(c,\pi)=(c^*,\pi^*)$. By the uniqueness of the continuous solution of SDE~(\ref{wealth SDE}), we get $X^*:=X^{(c^*,\pi^*,0,x)}=X$. Then, by the assumption (\rnum{2}) of this theorem, we see that $(c^*,\pi^*)\in\Pi^x_0$. Moreover, since
\begin{align*}
	\mathbb{E}\left[\int^T_0U'_1(c^*_s)^p\,ds+U'_2(X^*_T+E)^p\right]&=\mathbb{E}\left[\int^T_0\bigl(\lambda_2(s,T)U'_2(X_s+Y_s)\bigr)^p\,ds+U'_2(X_T+Y_T)^p\right]\\
	&\leq\left(\int^T_0\lambda_2(s,T)^p\,ds+1\right)\mathbb{E}\left[\sup_{0\leq s\leq T}U'_2(X_s+Y_s)^p\right]<\infty,
\end{align*}
we see that $(c^*,\pi^*)\in\Pi^{x,p}_1$.
\par
Now let us show that $(c^*,\pi^*)$ is an open-loop equilibrium pair. Fix arbitrary $t\in[0,T)$, $(\kappa,\eta)\in\chi_t$ and $\epsilon\in(0,T-t)$. Define $X^{t,\epsilon}:=X^{(c^{t,\epsilon},\pi^{t,\epsilon},t,X^*_t)}$ and $\xi^{t,\epsilon}:=X^{t,\epsilon}-X^*$. Then $\xi^{t,\epsilon}$ is the solution of SDE~(\ref{perturbation SDE}). Since $U_1$ and $U_2$ are concave, we have
\begin{align}\label{estimate R}
	\nonumber&\frac{R(c^{t,\epsilon},\pi^{t,\epsilon};t,X^*_t)-R(c^*,\pi^*;t,X^*_t)}{\epsilon}\\
	\nonumber&\leq\frac{1}{\epsilon}\mathbb{E}_t\left[\int^{t+\epsilon}_t\lambda_1(t,s)U'_1(c^*_s)\,ds\,\kappa+\lambda_2(t,T)U'_2(X^*_T+E)(X^{t,\epsilon}_T-X^*_T)\right]\\
	&=\frac{1}{\epsilon}\mathbb{E}_t\left[\int^{t+\epsilon}_t\lambda_1(t,s)\lambda_2(s,T)U'_2(X_s+Y_s)\,ds\,\kappa+\lambda_2(t,T)U'_2(X_T+Y_T)\xi^{t,\epsilon}_T\right].
\end{align}
Define $\alpha:=U'_2(X+Y)$. By the assumption, $\alpha$ is in $L^p_\mathbb{F}(\Omega;C([0,T];\mathbb{R}))$. Moreover, by It\^{o}'s formula,
\begin{align*}
	d\alpha_s&=U''_2(X_s+Y_s)(dX_s+dY_s)+\frac{1}{2}U^{(3)}_2(X_s+Y_s)\,d\langle X+Y,X+Y\rangle_s\\
	&=U''_2(X_s+Y_s)\Biggl(r_sX_s-|\theta^\mathcal{H}_s|^2\frac{U'_2}{U''_2}(X_s+Y_s)-\theta^\mathcal{H}_s\cdot Z^\mathcal{H}_s+e_s-(U'_1)^{-1}(\lambda_2(s,T)U'_2(X_s+Y_s))\\
	&\hspace{0.7cm}-r_sX_s+|\theta^\mathcal{H}_s|^2\frac{U'_2}{U''_2}(X_s+Y_s)+\theta^\mathcal{H}_s\cdot Z^\mathcal{H}_s-e_s+(U'_1)^{-1}(\lambda_2(s,T)U'_2(X_s+Y_s))\\
	&\hspace{0.7cm}-r_s\frac{U'_2}{U''_2}(X_s+Y_s)-\frac{1}{2}|\theta^\mathcal{H}_s|^2\frac{U^{(3)}_2(X_s+Y_s)(U'_2(X_s+Y_s))^2}{(U''_2(X_s+Y_s))^3}-\frac{1}{2}\frac{U^{(3)}_2}{U''_2}(X_s+Y_s)|Z^\mathcal{O}_s|^2\Biggr)\,ds\\
	&\hspace{0.4cm}+\frac{1}{2}U^{(3)}_2(X_s+Y_s)\left|-\Bigl(\frac{U'_2}{U''_2}(X_s+Y_s)\theta^\mathcal{H}_s+Z^\mathcal{H}_s\Bigr)+Z_s\right|^2\,ds\\
	&\hspace{0.4cm}+U''_2(X_s+Y_s)\left(-\Bigl(\frac{U'_2}{U''_2}(X_s+Y_s)\theta^\mathcal{H}_s+Z^\mathcal{H}_s\Bigr)+Z_s\right)\cdot dW_s\displaybreak[1]\\
	&=-r_sU'_2(X_s+Y_s)\,ds-U'_2(X_s+Y_s)\theta^\mathcal{H}_s\cdot dW^\mathcal{H}_s+U''_2(X_s+Y_s)Z^\mathcal{O}_s\cdot dW^\mathcal{O}_s.
\end{align*}
Define $\beta:=-U'_2(X+Y)\theta^\mathcal{H}+U''_2(X+Y)Z^\mathcal{O}$. Then it holds that $\int^T_0|\beta_s|^2\,ds<\infty$ a.s., and $(\alpha,\beta)$ satisfies the BSDE
\begin{equation*}
	\begin{cases}
		d\alpha_s=-r_s\alpha_s\,ds+\beta_s\cdot dW_s,\ s\in[0,T],\\
		\alpha_T=U'_2(X_T+Y_T).
	\end{cases}
\end{equation*}
Since $\alpha\in L^p_\mathbb{F}(\Omega;C([0,T];\mathbb{R}))$ and $r$ is bounded, by using the Burkholder--Davis--Gundy inequality, we can easily show that $\beta\in L^p_\mathbb{F}(\Omega;L^2(0,T;\mathbb{R}^d))$. Then, by the same arguments as in the proof of Lemma~\ref{lemma: variational equality}, we obtain
\begin{equation*}
	\mathbb{E}_t[U'_2(X_T+Y_T)\xi^{t,\epsilon}_T]=\mathbb{E}_t[\alpha_T\xi^{t,\epsilon}_T]=\mathbb{E}_t\bigl[\mathbb{E}_{t+\epsilon}[\alpha_T\xi^{t,\epsilon}_T]\bigr]=\mathbb{E}_t[\alpha_{t+\epsilon}\xi^{t,\epsilon}_{t+\epsilon}].
\end{equation*}
Furthermore, by applying It\^{o}'s formula to $(\alpha_s\xi^{t,\epsilon}_s)_{s\in[t,t+\epsilon]}$, we have
\begin{align*}
	&\alpha_{t+\epsilon}\xi^{t,\epsilon}_{t+\epsilon}\\
	&=\alpha_t\xi^{t,\epsilon}_t+\int^{t+\epsilon}_t\alpha_s\bigl((r_s\xi^{t,\epsilon}_s+\eta\cdot\theta^\mathcal{H}_s-\kappa)\,ds+\eta\cdot dW^\mathcal{H}_s\bigr)\\
	&\hspace{2cm}+\int^{t+\epsilon}_t\xi^{t,\epsilon}_s(-r_s\alpha_s\,ds+\beta_s\cdot dW_s)+\int^{t+\epsilon}_t\beta_s\cdot\eta\,ds\displaybreak[1]\\
	&=\left(\int^{t+\epsilon}_t(\alpha_s\theta^\mathcal{H}_s+\beta^\mathcal{H}_s)\,ds\right)\cdot\eta-\left(\int^{t+\epsilon}_t\alpha_s\,ds\right)\kappa+\int^{t+\epsilon}_t\alpha_s\eta\cdot dW^\mathcal{H}_s+\int^{t+\epsilon}_t\xi^{t,\epsilon}_s\beta_s\cdot dW_s.
\end{align*}
Note that, by the definition of $(\alpha,\beta)$, it holds that $\alpha\theta^\mathcal{H}+\beta^\mathcal{H}=0$. Moreover, since
\begin{align*}
	&\alpha\in L^p_\mathbb{F}(\Omega;C([0,T];\mathbb{R})),\ \beta\in L^p_\mathbb{F}(\Omega;L^2(0,T;\mathbb{R}^d)),\\
	&\eta\in L^\infty_{\mathcal{F}_t}(\Omega;\mathbb{R}^d),\ \xi^{t,\epsilon}\in\bigcap_{\gamma\geq1}L^\gamma_\mathbb{F}(\Omega;C([t,T];\mathbb{R})),
\end{align*}
we can show that
\begin{equation*}
	\mathbb{E}_t\left[\int^{t+\epsilon}_t\alpha_s\eta\cdot dW^\mathcal{H}_s+\int^{t+\epsilon}_t\xi^{t,\epsilon}_s\beta_s\cdot dW_s\right]=0\ \text{a.s.}
\end{equation*}
Consequently, we get
\begin{equation*}
	\mathbb{E}_t[U'_2(X_T+Y_T)\xi^{t,\epsilon}_T]=-\mathbb{E}_t\left[\int^{t+\epsilon}_t\alpha_s\,ds\right]\,\kappa.
\end{equation*}
From this equality and the estimate (\ref{estimate R}), we obtain
\begin{align*}
	&\frac{R(c^{t,\epsilon},\pi^{t,\epsilon};t,X^*_t)-R(c^*,\pi^*;t,X^*_t)}{\epsilon}\\
	&\leq\frac{1}{\epsilon}\mathbb{E}_t\left[\int^{t+\epsilon}_t\bigl(\lambda_1(t,s)\lambda_2(s,T)-\lambda_2(t,T)\bigr)\alpha_s\,ds\right]\,\kappa\\
	&\leq\max_{t\leq s\leq t+\epsilon}|\lambda_1(t,s)\lambda_2(s,T)-\lambda_2(t,T)|\ \mathbb{E}_t\left[\sup_{t\leq s\leq T}|\alpha_s|\right]\,|\kappa|\ \text{a.s.}
\end{align*}
Since $\lambda_1$ and $\lambda_2$ are continuous, noting that $\lambda_1(t,t)=1$, the third line of the above inequalities tends to zero as $\epsilon\downarrow0$. Consequently, we obtain
\begin{equation*}
	\limsup_{\epsilon\downarrow0}\frac{R(c^{t,\epsilon},\pi^{t,\epsilon};t,X^*_t)-R(c^*,\pi^*;t,X^*_t)}{\epsilon}\leq0\ \text{a.s.},
\end{equation*}
and hence $(c^*,\pi^*)$ is an open-loop equilibrium pair.
\end{proof}


\begin{rem}\label{remark: lambda_1}
Note that FBSDE~(\ref{FBSDE}) does not depend on the discount function $\lambda_1$ of the utility of consumptions. Consequently, if the conditions in Theorem~\ref{theorem: verification} are satisfied, then the pair $(c^*,\pi^*)$ defined by (\ref{equilibrium condition2}) is an open-loop equilibrium pair of the problem defined by $(\lambda_1,\lambda_2,U_1,U_2)$ for any $\lambda_1\in\Lambda$.
\end{rem}


\begin{rem}\label{remark: BMO}
Let us remark on the $L^p$-integrability assumption in Theorem~\ref{theorem: verification}. If $(X,Y,Z)$ satisfies conditions (\rnum{1}) and (\rnum{3}) in the theorem, then the process $U'_2(X+Y)$ evolves as
\begin{align*}
	U'_2(X_t+Y_t)=&U'_2(x+Y_0)-\int^t_0r_sU'_2(X_s+Y_s)\,ds\\
	&-\int^t_0U'_2(X_s+Y_s)\theta^\mathcal{H}_s\cdot dW^\mathcal{H}_s+\int^t_0U''_2(X_s+Y_s)Z^\mathcal{O}_s\cdot dW^\mathcal{O}_s,\ t\in[0,T].
\end{align*}
Therefore, by letting $\nu:=-\theta^\mathcal{H}+\frac{U''_2}{U'_2}(X+Y)Z^\mathcal{O}$, we get
\begin{equation*}
	U'_2(X+Y)=U'_2(x+Y_0)\exp\Bigl(-\int^\cdot_0r_s\,ds\Bigr)\mathcal{E}\Bigl(\int^\cdot_0\nu_s\cdot dW_s\Bigr),
\end{equation*}
where $\mathcal{E}(\cdot)$ denotes the stochastic exponent. If $\int^\cdot_0\nu_s\cdot dW_s$ is a BMO martingale, then by Theorem~7.2.3 of the textbook~\cite{b_Zhang_17}, there exists a constant $p>1$ such that
\begin{equation*}
	\mathcal{E}\Bigl(\int^\cdot_0\nu_s\cdot dW_s\Bigr)\in L^p_\mathbb{F}(\Omega;C([0,T];\mathbb{R})),
\end{equation*}
and hence the process $U'_2(X+Y)$ is also in $L^p_\mathbb{F}(\Omega;C([0,T];\mathbb{R}))$.
\end{rem}

Now we investigate the well-posedness of FBSDE~\eqref{FBSDE} in the case of exponential utility functions. We assume that the interest rate process $(r_s)_{s\in[0,T]}$ is a constant $r\geq0$. Fix arbitrary discount functions $\lambda_1,\,\lambda_2\in\Lambda$. For $i=1,2$, let $U_i(x)=-\exp(-\gamma_i x)$ for some constant $\gamma_i>0$. Then we have
\begin{equation*}
	\frac{U'_2(x)}{U''_2(x)}=-\frac{1}{\gamma_2},\ \frac{U^{(3)}_2(x)(U'_2(x))^2}{(U''_2(x))^3}=-\frac{1}{\gamma_2},\ \frac{U^{(3)}_2(x)}{U''_2(x)}=-\gamma_2,
\end{equation*}
and
\begin{equation*}
	(U'_1)^{-1}(\lambda_2(s,T)U'_2(x))=\frac{\gamma_2}{\gamma_1}x-\frac{1}{\gamma_1}\log\left(\frac{\gamma_2}{\gamma_1}\lambda_2(s,T)\right)
\end{equation*}
for $x\in\mathbb{R}$ and $s\in[0,T]$. Thus, FBSDE~\eqref{FBSDE} becomes the following:
\begin{equation}\label{expFBSDE}
	\begin{cases}
		X_t=x+\int^t_0\bigl(\frac{1}{\gamma_2}\theta^\mathcal{H}_s-Z^\mathcal{H}_s\bigr)\cdot dW_s\\
		\hspace{1cm}+\int^t_0\bigl(rX_s-\frac{\gamma_2}{\gamma_1}(X_s+Y_s)-\theta^\mathcal{H}_s\cdot Z^\mathcal{H}_s+\frac{1}{\gamma_2}|\theta^\mathcal{H}_s|^2+e_s+\frac{1}{\gamma_1}\log(\frac{\gamma_2}{\gamma_1}\lambda_2(s,T))\bigr)\,ds,\\
		Y_t=E-\int^T_tZ_s\cdot dW_s\\
		\hspace{1cm}-\int^T_t\bigl(-rX_s+\frac{\gamma_2}{\gamma_1}(X_s+Y_s)+\theta^\mathcal{H}_s\cdot Z^\mathcal{H}_s+\frac{\gamma_2}{2}|Z^\mathcal{O}_s|^2\\
		\hspace{3cm}-\frac{1}{2\gamma_2}|\theta^\mathcal{H}_s|^2-e_s-\frac{1}{\gamma_1}\log(\frac{\gamma_2}{\gamma_1}\lambda_2(s,T))+\frac{r}{\gamma_2}\bigr)\,ds,\ \ t\in[0,T].
	\end{cases}
\end{equation}
We transform $(X,Y,Z)$ by
\begin{equation*}
	\tilde{X}_t:=h(t)X_t,\ \tilde{Y}_t:=Y_t+(1-h(t))X_t\ \text{and}\ \tilde{Z}_t:=Z_t+(1-h(t))\left(\frac{1}{\gamma_2}\theta^\mathcal{H}_t-Z^\mathcal{H}_t\right),
\end{equation*}
where $h(t)$ is a positive deterministic function defined by
\begin{equation*}
	h(t):=
	\begin{cases}
	\frac{1}{1+\frac{\gamma_2}{\gamma_1}(T-t)}\ \text{if}\ r=0,\\
	\frac{r}{\frac{\gamma_2}{\gamma_1}-(\frac{\gamma_2}{\gamma_1}-r)\exp(-r(T-t))}\ \text{if}\ r>0
	\end{cases}
\end{equation*}
Note that $h$ satisfies the following differential equation:
\begin{equation*}
	\dot{h}(t)=h(t)\left(\frac{\gamma_2}{\gamma_1}h(t)-r\right),\ t\in[0,T],\ h(T)=1.
\end{equation*}
By using It\^{o}'s formula, we see that FBSDE~\eqref{expFBSDE} is transformed to the following:
\begin{equation}\label{expFBSDE'}
	\begin{cases}
		\tilde{X}_t=h(0)x+\int^t_0\bigl(\frac{1}{\gamma_2}\theta^\mathcal{H}_s-\tilde{Z}^\mathcal{H}_s\bigr)\cdot dW_s\\
		\hspace{1cm}+\int^t_0\bigl(-\frac{\gamma_2}{\gamma_1}h(s)\tilde{Y}_s-\theta^\mathcal{H}_s\cdot \tilde{Z}^\mathcal{H}_s+\frac{1}{\gamma_2}|\theta^\mathcal{H}_s|^2+h(s)e_s+\frac{h(s)}{\gamma_1}\log(\frac{\gamma_2}{\gamma_1}\lambda_2(s,T))\bigr)\,ds,\\
		\tilde{Y}_t=E-\int^T_t\tilde{Z}_s\cdot dW_s\\
		\hspace{1cm}-\int^T_t\bigl(\frac{\gamma_2}{\gamma_1}h(s)\tilde{Y}_s+\theta^\mathcal{H}_s\cdot \tilde{Z}^\mathcal{H}_s+\frac{\gamma_2}{2}|\tilde{Z}^\mathcal{O}_s|^2\\
		\hspace{3cm}-\frac{1}{2\gamma_2}|\theta^\mathcal{H}_s|^2-h(s)e_s-\frac{h(s)}{\gamma_1}\log(\frac{\gamma_2}{\gamma_1}\lambda_2(s,T))+\frac{r}{\gamma_2}\bigr)\,ds,\\
		\hspace{13cm}t\in[0,T].
	\end{cases}
\end{equation}
By the construction, if $(X,Y,Z)$ solves FBSDE~\eqref{expFBSDE}, then $(\tilde{X},\tilde{Y},\tilde{Z})$ solves the system~\eqref{expFBSDE'}. Conversely, if $(\tilde{X},\tilde{Y},\tilde{Z})$ satisfies \eqref{expFBSDE'}, then $(X,Y,Z)$ defined by
\begin{equation}\label{transform}
	X_t:=\frac{1}{h(t)}\tilde{X}_t,\ Y_t:=\tilde{Y}_t+\left(1-\frac{1}{h(t)}\right)\tilde{X}_t\ \text{and}\ Z_t:=\tilde{Z}_t+\left(1-\frac{1}{h(t)}\right)\left(\frac{1}{\gamma_2}\theta^\mathcal{H}_t-\tilde{Z}^\mathcal{H}_t\right)
\end{equation}
solves FBSDE~\eqref{expFBSDE}. We note that the system~\eqref{expFBSDE'} is a decoupled FBSDE, and the generator of the BSDE for $(\tilde{Y},\tilde{Z})$ has quadratic growth with respect to $\tilde{Z}^\mathcal{O}$. From this observation we obtain the following result.


\begin{prop}
Assume that the interest rate process $(r_s)_{s\in[0,T]}$ is a constant $r\geq0$, and both the rate of income process $(e_s)_{s\in[0,T]}$ and the $\mathcal{F}_T$-measurable lump-sum payment $E$ are bounded. Fix arbitrary discount functions $\lambda_1,\,\lambda_2\in\Lambda$. For $i=1,2$, let $U_i(x)=-\exp(-\gamma_i x)$ for some constant $\gamma_i>0$. Then there exists a constant $p>1$ such that, for any $x\in\mathbb{R}$, there exists a triplet $(X,Y,Z)$ satisfying the conditions (\rnum{1}), (\rnum{2}) and (\rnum{3}) in Theorem~\ref{theorem: verification}. In particular, $(c^*,\pi^*)$ defined by \eqref{equilibrium condition2} is in $\Pi^{x,p}_2$, and it is an open-loop equilibrium pair for the initial wealth $x$ with the corresponding wealth process $X^{(c^*,\pi^*,0,x)}=X$.
\end{prop}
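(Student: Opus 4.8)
The plan is to exploit the decoupled structure of the transformed system~\eqref{expFBSDE'}: I would first solve the backward equation for $(\tilde{X},\tilde{Y},\tilde{Z})$'s backward part $(\tilde{Y},\tilde{Z})$, which involves neither $\tilde{X}$ nor the initial wealth $x$, then substitute its solution into the forward equation for $\tilde{X}$, and finally transfer everything back to $(X,Y,Z)$ through the change of variables~\eqref{transform}. The advantage of working with~\eqref{expFBSDE'} rather than~\eqref{FBSDE} directly is that all of its coefficients are bounded: $h$ is continuous and strictly positive on $[0,T]$, hence bounded above and away from $0$ by positive constants (immediate from the two explicit formulas for $h$); $\theta^\mathcal{H}$ and $e$ are bounded by hypothesis; and $\log\bigl(\tfrac{\gamma_2}{\gamma_1}\lambda_2(s,T)\bigr)$ is bounded because $\lambda_2$ is continuous and strictly positive on the compact set $\Delta[0,T]$.

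For the backward step, observe that the driver of the BSDE for $(\tilde{Y},\tilde{Z})$ in~\eqref{expFBSDE'} is affine in $\tilde{Y}$ and in $\tilde{Z}^\mathcal{H}$ with bounded coefficients, quadratic in $\tilde{Z}^\mathcal{O}$ with the constant coefficient $\tfrac{\gamma_2}{2}$, plus a bounded inhomogeneous term, while the terminal datum $E$ is bounded. I would therefore invoke the standard existence theory for quadratic BSDEs with bounded terminal condition (see, e.g.,~\cite{b_Zhang_17}) to obtain a solution $(\tilde{Y},\tilde{Z})$ with $\tilde{Y}$ bounded and $\int^\cdot_0\tilde{Z}_s\cdot dW_s$ a BMO martingale; the key point is that the bound on $\|\tilde{Y}\|_\infty$ and the BMO norm of $\int^\cdot_0\tilde{Z}_s\cdot dW_s$ depend only on the data $(T,\|\theta^\mathcal{H}\|_\infty,\|e\|_\infty,\|E\|_\infty,\gamma_1,\gamma_2,\lambda_2)$ and \emph{not} on $x$. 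The process $\tilde{X}$ is then given explicitly by the forward equation in~\eqref{expFBSDE'} --- its drift does not contain $\tilde{X}$, and since $\tilde{Z}^\mathcal{H}$ lies in $L^\gamma_\mathbb{F}(0,T;\mathbb{R}^d)$ for every $\gamma\geq1$ (by the energy inequality for BMO martingales), $\tilde{X}$ is a well-defined continuous semimartingale. Undoing~\eqref{transform} and using the two-sided bounds on $h$, I obtain a triplet $(X,Y,Z)$ satisfying condition~(\rnum{1}) of Theorem~\ref{theorem: verification} and, by construction, FBSDE~\eqref{FBSDE}, i.e.\ condition~(\rnum{3}).

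It then remains to check the integrability condition~(\rnum{2}). A direct computation from~\eqref{transform} gives $X+Y=\tilde{X}+\tilde{Y}$, so $U'_2(X+Y)=\gamma_2\exp(-\gamma_2(\tilde{X}+\tilde{Y}))$. Following Remark~\ref{remark: BMO} and using $\tfrac{U''_2}{U'_2}\equiv-\gamma_2$ for exponential utility, one has $U'_2(X+Y)=U'_2(x+Y_0)\exp\bigl(-\int^\cdot_0r\,ds\bigr)\,\mathcal{E}\bigl(\int^\cdot_0\nu_s\cdot dW_s\bigr)$ with $\nu=-\theta^\mathcal{H}-\gamma_2Z^\mathcal{O}$, and from~\eqref{transform} one reads off $Z^\mathcal{O}=\tilde{Z}^\mathcal{O}$. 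Hence $\int^\cdot_0\nu_s\cdot dW_s$ is a BMO martingale whose BMO norm is bounded independently of $x$, and Theorem~7.2.3 of~\cite{b_Zhang_17} provides a constant $p>1$, not depending on $x$, such that $\mathcal{E}\bigl(\int^\cdot_0\nu_s\cdot dW_s\bigr)\in L^p_\mathbb{F}(\Omega;C([0,T];\mathbb{R}))$, whence $U'_2(X+Y)\in L^p_\mathbb{F}(\Omega;C([0,T];\mathbb{R}))$. The remaining part of~(\rnum{2}) follows from the identities $\bigl|U_1\bigl((U'_1)^{-1}(\lambda_2(s,T)U'_2(X_s+Y_s))\bigr)\bigr|=\tfrac{1}{\gamma_1}\lambda_2(s,T)U'_2(X_s+Y_s)$ and $|U_2(X_T+E)|=\tfrac{1}{\gamma_2}U'_2(X_T+Y_T)$ (valid for exponential utilities, using $Y_T=E$), both of which are integrable since $U'_2(X+Y)\in L^p_\mathbb{F}(\Omega;C([0,T];\mathbb{R}))\subset L^1$ and $\lambda_2$ is bounded. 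Theorem~\ref{theorem: verification} then gives $(c^*,\pi^*)\in\Pi^{x,p}_1$ together with the open-loop equilibrium property and $X^{(c^*,\pi^*,0,x)}=X$; finally, $\Pi^{x,p}_1=\Pi^{x,p}_2$ for exponential utilities by Lemma~\ref{lemma: utility functions} (their hypotheses being satisfied, as observed in the remark following that lemma), so in fact $(c^*,\pi^*)\in\Pi^{x,p}_2$.

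The main obstacle --- and the reason the statement is restricted to a constant interest rate and bounded $(e,E)$ --- is the quadratic growth of the driver of the BSDE for $(\tilde{Y},\tilde{Z})$ in the non-hedgeable component $\tilde{Z}^\mathcal{O}$: bounded data is precisely what is needed both to apply the quadratic-BSDE existence theory and to obtain the $x$-uniform BMO estimate which, after being transferred back through~\eqref{transform}, yields the single exponent $p>1$ required in Theorem~\ref{theorem: verification}.
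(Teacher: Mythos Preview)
Your proposal is correct and follows essentially the same approach as the paper: solve the decoupled quadratic BSDE in~\eqref{expFBSDE'} for $(\tilde{Y},\tilde{Z})$ with $\tilde{Y}$ bounded and $\int^\cdot_0\tilde{Z}_s\cdot dW_s$ BMO (the paper cites Theorems~7.2.1 and~7.3.3 of~\cite{b_Zhang_17}), define $\tilde{X}$ from the forward part, transform back via~\eqref{transform}, and verify condition~(\rnum{2}) through Remark~\ref{remark: BMO} using $\nu=-\theta^\mathcal{H}-\gamma_2\tilde{Z}^\mathcal{O}$. Your write-up in fact makes explicit a couple of identities (namely $X+Y=\tilde{X}+\tilde{Y}$ and $Z^\mathcal{O}=\tilde{Z}^\mathcal{O}$) and the $x$-independence of the BMO norm that the paper leaves implicit, and you invoke Lemma~\ref{lemma: utility functions} at the end rather than the beginning, but the argument is otherwise the same.
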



\begin{proof}
By Lemma~\ref{lemma: utility functions}, we have $\Pi^{x,p}_1=\Pi^{x,p}_2$ for any $x\in\mathbb{R}$ and $p>1$. Thus, it suffices to show that there exists a triplet $(X,Y,Z)$ satisfying the conditions (\rnum{1}), (\rnum{2}) and (\rnum{3}) in Theorem~\ref{theorem: verification} for some $p>1$. It follows from Theorems~7.2.1 and 7.3.3 in the textbook~\cite{b_Zhang_17} that the BSDE in the system~\eqref{expFBSDE'} has a unique solution $(\tilde{Y},\tilde{Z})$ such that $\tilde{Y}$ is bounded and $\int^\cdot_0\tilde{Z}_s\cdot dW_s$ is a BMO martingale. For each $x\in\mathbb{R}$, define $\tilde{X}$ by the first equation in the system~\eqref{expFBSDE'}, and define $(X,Y,Z)$ by \eqref{transform}. Then $(X,Y,Z)$ satisfies the conditions (\rnum{1}) and (\rnum{3}) in Theorem~\ref{theorem: verification}. It remains to show that the condition (\rnum{2}) holds for some $p>1$. Noting that
\begin{equation*}
	\nu:=-\theta^\mathcal{H}+\frac{U''_2}{U'_2}(X+Y)Z^\mathcal{O}=-\theta^\mathcal{H}-\gamma_2\tilde{Z}^\mathcal{O},
\end{equation*}
we see that $\int^\cdot_0\nu_s\cdot dW_s$ is a BMO martingale. Thus, by the discussions in Remark~\ref{remark: BMO}, there exists a constant $p>1$ (which does not depend on $x$) such that
\begin{equation*}
	U'_2(X+Y)=\gamma_2\exp(-\gamma_2(X+Y))\in L^p_\mathbb{F}(\Omega;C([0,T];\mathbb{R})).
\end{equation*}
Moreover, we have
\begin{equation*}
	\mathbb{E}\bigl[|U_2(X_T+E)|\bigr]=\mathbb{E}\bigl[\exp(-\gamma_2(X_T+Y_T))\bigr]<\infty
\end{equation*}
and
\begin{equation*}
\begin{split}
	&\mathbb{E}\left[\int^T_0\bigl|U_1\bigl((U'_1)^{-1}(\lambda_2(s,T)U'_2(X_s+Y_s))\bigr)\bigr|\,ds\right]\\
	&=\mathbb{E}\left[\int^T_0\frac{\gamma_2}{\gamma_1}\lambda_2(s,T)\exp(-\gamma_2(X_s+Y_s))\,ds\right]<\infty.
\end{split}
\end{equation*}
Hence, the condition (\rnum{2}) in Theorem~\ref{theorem: verification} holds and we complete the proof.
\end{proof}


\begin{rem}
The quadratic BSDE in the system~\eqref{expFBSDE'} has the same structure as the one in Cheridito and Hu~\cite{a_cheridito-Hu_11}. For the general FBSDE~\eqref{FBSDE}, we mention the study of Fromm and Imkeller~\cite{a_Fromm-Imkeller_20}. They showed the well-posedness of a similar FBSDE (without the terms $r_sX_s$ and $(U'_1)^{-1}(\lambda_2(s,T)U'_2(X_s+Y_s))$) by using the method of decoupling fields. However, we cannot apply their results directly to FBSDE~\eqref{FBSDE} due to some technical difficulties stem from the appearance of the above two terms. Thus, FBSDE~\eqref{FBSDE} of the general form is beyond the literature, and the well-posedness is an open-problem at this moment.
\end{rem}


\section{An equivalent time-consistent problem}\label{section: equivalent problem}

\paragraph{}
\ \,In this section, we investigate a relationship between an open-loop equilibrium pair of a time-inconsistent control problem and an optimal pair of a time-consistent problem. Recall that the reward functional $R$ is determined by $(\lambda_1,\lambda_2,U_1,U_2)$ for each $\lambda_1,\lambda_2\in\Lambda$ and $U_1,U_2\in\mathbb{U}$. We refer to the time-inconsistent consumption-investment problem with coefficients $(\lambda_1,\lambda_2,U_1,U_2)$ and an initial wealth $x\in\mathbb{R}$ as Problem~(I)$^x_{\lambda_1,\lambda_2,U_1,U_2}$.
\par
By Theorems~\ref{theorem: necessary condition},\ \ref{theorem: verification} and Remark~\ref{remark: lambda_1}, we easily obtain the following corollary.


\begin{cor}\label{corollary: lambda_1}
Suppose that a triplet $(\lambda_2,U_1,U_2)\in\Lambda\times\mathbb{U}\times\mathbb{U}$ is given. Fix an initial wealth $x\in\mathbb{R}$ and let $(c^*,\pi^*)\in\Pi^{x,p}_2$ for some $p>1$. Then the following are equivalent:
\begin{enumerate}
\renewcommand{\labelenumi}{(\roman{enumi})}
\item
For some $\lambda_1\in\Lambda$, $(c^*,\pi^*)$ is an open-loop equilibrium pair for Problem~(I)$^x_{\lambda_1,\lambda_2,U_1,U_2}$;
\item
For any $\lambda_1\in\Lambda$, $(c^*,\pi^*)$ is an open-loop equilibrium pair for Problem~(I)$^x_{\lambda_1,\lambda_2,U_1,U_2}$.
\end{enumerate}
\end{cor}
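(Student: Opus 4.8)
The plan is to route the argument through the FBSDE~\eqref{FBSDE}, using the observation recorded in Remark~\ref{remark: lambda_1} that neither FBSDE~\eqref{FBSDE} nor the hypotheses (\rnum{1})--(\rnum{3}) of Theorem~\ref{theorem: verification} involve the discount function $\lambda_1$: only the triplet $(\lambda_2,U_1,U_2)$ and the market coefficients enter. The implication (\rnum{2})$\Rightarrow$(\rnum{1}) is immediate, since $\Lambda\neq\emptyset$ (it contains, e.g., the exponential discount functions), so it suffices to prove (\rnum{1})$\Rightarrow$(\rnum{2}).

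For (\rnum{1})$\Rightarrow$(\rnum{2}), suppose $(c^*,\pi^*)\in\Pi^{x,p}_2$ is an open-loop equilibrium pair for Problem~(I)$^x_{\lambda^0_1,\lambda_2,U_1,U_2}$ for some fixed $\lambda^0_1\in\Lambda$. First I would invoke Theorem~\ref{theorem: necessary condition}: it produces a pair $(Y,Z)$ such that $U'_2(X^*+Y)\in L^p_\mathbb{F}(\Omega;C([0,T];\mathbb{R}))$, $\int^T_0|Z_s|^2\,ds<\infty$ a.s., the equilibrium relations~\eqref{equilibrium condition} hold, and the triplet $(X,Y,Z):=(X^*,Y,Z)$ solves FBSDE~\eqref{FBSDE}. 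Since FBSDE~\eqref{FBSDE} depends only on $(\lambda_2,U_1,U_2)$ and the model, this very triplet is a candidate solution for the hypothesis of Theorem~\ref{theorem: verification}.

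Next I would check that $(X,Y,Z)$ meets conditions (\rnum{1})--(\rnum{3}) of Theorem~\ref{theorem: verification}. Condition (\rnum{1}) is clear: $X=X^*$ is continuous and adapted (being a wealth process), $Y$ is continuous and adapted and $Z$ has the required square-integrability by Theorem~\ref{theorem: necessary condition}(\rnum{1}). For condition (\rnum{2}), the requirement $U'_2(X+Y)\in L^p_\mathbb{F}(\Omega;C([0,T];\mathbb{R}))$ is again part of Theorem~\ref{theorem: necessary condition}(\rnum{1}); moreover, by the first line of~\eqref{equilibrium condition} one has $(U'_1)^{-1}(\lambda_2(s,T)U'_2(X_s+Y_s))=c^*_s$ and $X_T+E=X^*_T+E$, so the remaining integrability in condition (\rnum{2}) reduces to $\mathbb{E}[\int^T_0|U_1(c^*_s)|\,ds+|U_2(X^*_T+E)|]<\infty$, which holds because $(c^*,\pi^*)\in\Pi^x_0$. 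Condition (\rnum{3}) is exactly the last assertion of Theorem~\ref{theorem: necessary condition}. Then, for an arbitrary $\lambda_1\in\Lambda$, I would apply Theorem~\ref{theorem: verification} (equivalently, Remark~\ref{remark: lambda_1}): the pair defined by~\eqref{equilibrium condition2}, which coincides with $(c^*,\pi^*)$ by virtue of~\eqref{equilibrium condition}, is an open-loop equilibrium pair for Problem~(I)$^x_{\lambda_1,\lambda_2,U_1,U_2}$; as $\lambda_1$ was arbitrary, (\rnum{2}) follows.

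The only point demanding care — and the ``main obstacle'', such as it is — is ensuring that the integrability properties furnished by Theorem~\ref{theorem: necessary condition} match precisely the integrability conditions required as input by Theorem~\ref{theorem: verification}; as sketched above this matching is essentially bookkeeping, the genuine content of the corollary being entirely concentrated in the $\lambda_1$-independence of FBSDE~\eqref{FBSDE} and of the conditions of Theorem~\ref{theorem: verification}.
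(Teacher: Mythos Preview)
Your proposal is correct and follows exactly the route the paper indicates: the paper's proof of the corollary consists only of the sentence ``By Theorems~\ref{theorem: necessary condition},\ \ref{theorem: verification} and Remark~\ref{remark: lambda_1}, we easily obtain the following corollary,'' and you have simply expanded that citation into a careful verification that the output of Theorem~\ref{theorem: necessary condition} matches the input of Theorem~\ref{theorem: verification}.
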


Next, we introduce a time-consistent consumption-investment problem, which turns out to be equivalent to Problem~(I)$^x_{\lambda_1,\lambda_2,U_1,U_2}$ in some sense. For each triplet $(\lambda_2,U_1,U_2)\in\Lambda\times\mathbb{U}\times\mathbb{U}$, define
\begin{equation*}
	\mathcal{R}(c,\pi;x):=\mathbb{E}\left[\int^T_0\frac{1}{\lambda_2(s,T)}U_1(c_s)\,ds+U_2(X^{(c,\pi,0,x)}_T+E)\right]
\end{equation*}
for $x\in\mathbb{R}$ and $(c,\pi)\in\Pi^x_0$. Moreover, we consider the set
\begin{equation*}
	\Pi^{x,p}_3:=\Pi^x_0\cap\bigl(L^p_\mathbb{F}(\Omega;L^1(0,T;\mathbb{R}))\times L^p_\mathbb{F}(\Omega;L^2(0,T;\mathbb{R}^d))\bigr)
\end{equation*}
for each $x\in\mathbb{R}$ and $p>1$. A standard utility maximization problem for the reward functional $\mathcal{R}$ (within the class of open-loop controls) is stated as follows: For each $x\in\mathbb{R}$ and $p>1$, find a pair $(c^*,\pi^*)\in\Pi^{x,p}_3$ such that
\begin{equation}\label{maximization}
	\mathcal{R}(c^*,\pi^*;x)=\sup_{(c,\pi)\in\Pi^{x,p}_3}\mathcal{R}(c,\pi;x).
\end{equation}
We refer to the above maximization problem as Problem~(C)$^{x,p}_{\lambda_2,U_1,U_2}$. We call a pair $(c^*,\pi^*)$ satisfying (\ref{maximization}) an optimal pair. It is well known that Problem~(C)$^{x,p}_{\lambda_2,U_1,U_2}$ is time-consistent. Indeed, the following lemma holds true.


\begin{lemm}\label{lemma: time-consistency}
Let $(\lambda_2,U_1,U_2)\in\Lambda\times\mathbb{U}\times\mathbb{U}$ and an initial wealth $x\in\mathbb{R}$ be given. Suppose that $(c^*,\pi^*)\in\Pi^{x,p}_3$ for some $p>1$, and denote the corresponding wealth process by $X^*$. If $(c^*,\pi^*)$ is an optimal pair for Problem~(C)$^{x,p}_{\lambda_2,U_1,U_2}$, then for any $t\in[0,T)$ and $(c,\pi)\in\Pi^{x,p}_3$ satisfying $(c_s,\pi_s)=(c^*_s,\pi^*_s)$ for $s\in[0,t)$, it holds that
\begin{equation*}
	\mathcal{R}(c,\pi;t,X^*_t)\leq\mathcal{R}(c^*,\pi^*;t,X^*_t)\ \text{a.s.},
\end{equation*}
where
\begin{equation*}
	\mathcal{R}(c,\pi;t,X^*_t):=\mathbb{E}_t\left[\int^T_t\frac{1}{\lambda_2(s,T)}U_1(c_s)\,ds+U_2(X^{(c,\pi,t,X^*_t)}_T+E)\right].
\end{equation*}
\end{lemm}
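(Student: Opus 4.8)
The plan is to deduce the stated \emph{local} optimality from the assumed \emph{global} optimality of $(c^*,\pi^*)$ by a standard pasting argument, crucially exploiting the fact that the running discount weight $1/\lambda_2(s,T)$ appearing in $\mathcal{R}$ depends on $s$ alone and carries no reference time. This reference-time-independence is precisely what makes Problem~(C)$^{x,p}_{\lambda_2,U_1,U_2}$ time-consistent, in contrast to the original Problem~(I).

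Concretely, I would argue by contradiction. Fix $t\in[0,T)$ and a competitor $(c,\pi)\in\Pi^{x,p}_3$ with $(c_s,\pi_s)=(c^*_s,\pi^*_s)$ on $[0,t)$, and suppose the $\mathcal{F}_t$-measurable event $A:=\{\mathcal{R}(c,\pi;t,X^*_t)>\mathcal{R}(c^*,\pi^*;t,X^*_t)\}$ has positive probability. Define the pasted pair by $(\hat c_s,\hat\pi_s):=(c^*_s,\pi^*_s)$ for $s<t$ and $(\hat c_s,\hat\pi_s):=\1_A(c_s,\pi_s)+\1_{A^c}(c^*_s,\pi^*_s)$ for $s\geq t$. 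Since $A\in\mathcal{F}_t$, the pair $(\hat c,\hat\pi)$ is predictable and, since the integrability requirements of $\Pi^{x,p}_3$ (including (H0)$_x$) split over the disjoint $\mathcal{F}_t$-sets $A$ and $A^c$, one verifies $(\hat c,\hat\pi)\in\Pi^{x,p}_3$. Then, using uniqueness of the solution of \eqref{wealth SDE} one has $X^{(\hat c,\hat\pi,0,x)}=X^*$ on $[0,t]$; using the flow property of \eqref{wealth SDE}, $X^{(\hat c,\hat\pi,0,x)}=X^{(\hat c,\hat\pi,t,X^*_t)}$ on $[t,T]$; and, multiplying the SDE on $[t,T]$ by $\1_A$ resp.\ $\1_{A^c}$ (which are $\mathcal{F}_s$-measurable for $s\geq t$) and invoking uniqueness once more, $\1_A X^{(\hat c,\hat\pi,t,X^*_t)}=\1_A X^{(c,\pi,t,X^*_t)}$ and $\1_{A^c}X^{(\hat c,\hat\pi,t,X^*_t)}=\1_{A^c}X^{(c^*,\pi^*,t,X^*_t)}$.

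With these identifications, the conceptual step is the following decomposition: since $(\hat c,\hat\pi)\in\Pi^x_0$ makes the relevant integrand integrable, the tower property together with the reference-time-independence of $1/\lambda_2(s,T)$ gives
$$\mathcal{R}(\hat c,\hat\pi;x)=\mathbb{E}\Bigl[\int^t_0\tfrac{1}{\lambda_2(s,T)}U_1(c^*_s)\,ds\Bigr]+\mathbb{E}\bigl[\1_A\,\mathcal{R}(c,\pi;t,X^*_t)+\1_{A^c}\,\mathcal{R}(c^*,\pi^*;t,X^*_t)\bigr],$$
while the same computation applied to $(c^*,\pi^*)$ itself gives $\mathcal{R}(c^*,\pi^*;x)=\mathbb{E}[\int^t_0\tfrac{1}{\lambda_2(s,T)}U_1(c^*_s)\,ds]+\mathbb{E}[\mathcal{R}(c^*,\pi^*;t,X^*_t)]$. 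Subtracting,
$$\mathcal{R}(\hat c,\hat\pi;x)-\mathcal{R}(c^*,\pi^*;x)=\mathbb{E}\bigl[\1_A\bigl(\mathcal{R}(c,\pi;t,X^*_t)-\mathcal{R}(c^*,\pi^*;t,X^*_t)\bigr)\bigr]>0$$
since the integrand is strictly positive on $A$ and $\mathbb{P}(A)>0$; this contradicts the optimality of $(c^*,\pi^*)$ for Problem~(C)$^{x,p}_{\lambda_2,U_1,U_2}$, so $\mathbb{P}(A)=0$ and the claimed inequality holds a.s.

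I expect the only genuinely technical points to be the verification that $(\hat c,\hat\pi)\in\Pi^{x,p}_3$ and the three uniqueness/flow identities for its wealth process; both are routine measurability arguments built on $A\in\mathcal{F}_t$. The heart of the matter is the displayed decomposition of $\mathcal{R}(\hat c,\hat\pi;x)$, and the single place where the structure of Problem~(C) is essential is exactly that the weight $1/\lambda_2(s,T)$ is the \emph{same} in $\mathcal{R}(\cdot\,;x)$ and in $\mathcal{R}(\cdot\,;t,X^*_t)$ — were it replaced by a genuinely two-time weight such as $\lambda_1(t,s)$, the decomposition would fail, which is the source of time-inconsistency in Problem~(I).
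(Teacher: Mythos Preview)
Your proposal is correct and follows essentially the same pasting argument as the paper: both construct the spliced pair $(\hat c,\hat\pi)=(c^*,\pi^*)\1_{[0,t)}+\bigl(\1_A(c,\pi)+\1_{A^c}(c^*,\pi^*)\bigr)\1_{[t,T]}$ for an $\mathcal{F}_t$-measurable set $A$, identify the resulting wealth process on $A$ and $A^c$, and reduce the conditional inequality to $\mathbb{E}\bigl[\1_A\bigl(\mathcal{R}(c,\pi;t,X^*_t)-\mathcal{R}(c^*,\pi^*;t,X^*_t)\bigr)\bigr]\leq0$. The only cosmetic difference is that the paper lets $A\in\mathcal{F}_t$ be arbitrary and concludes from this, whereas you take the specific bad set $A=\{\mathcal{R}(c,\pi;t,X^*_t)>\mathcal{R}(c^*,\pi^*;t,X^*_t)\}$ and argue by contradiction.
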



\begin{proof}
See Appendix~\ref{appendix}.
\end{proof}

The next theorem is the main result of this section.


\begin{theo}\label{theorem: equivalent problems}
Fix coefficients $(\lambda_1,\lambda_2,U_1,U_2)\in\Lambda\times\Lambda\times\mathbb{U}\times\mathbb{U}$ and an initial wealth $x\in\mathbb{R}$. Let $(c^*,\pi^*)\in\Pi^{x,p}_2\cap\Pi^{x,p/(p-1)}_3$ for some $p>1$. Then the following are equivalent:
\begin{enumerate}
\renewcommand{\labelenumi}{(\roman{enumi})}
\item
$(c^*,\pi^*)$ is an open-loop equilibrium pair for Problem~(I)$^x_{\lambda_1,\lambda_2,U_1,U_2}$;
\item
$(c^*,\pi^*)$ is an optimal pair for Problem~(C)$^{x,p/(p-1)}_{\lambda_2,U_1,U_2}$.
\end{enumerate}
In particular, among all consumption-investment pairs in the set $\Pi^{x,p}_2\cap\Pi^{x,p/(p-1)}_3$, the open-loop equilibrium pair for Problem~(I)$^x_{\lambda_1,\lambda_2,U_1,U_2}$ is, if it exists, unique.
\end{theo}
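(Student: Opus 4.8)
The plan is to prove the equivalence (i)$\Leftrightarrow$(ii) by routing both directions through the FBSDE~(\ref{FBSDE}) and the results of Sections~\ref{section: necessary condition}--\ref{section: verification theorem}, and then to obtain the uniqueness assertion from the strict concavity of the reward functional $\mathcal{R}$ together with the absence of arbitrage.

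For the implication (i)$\Rightarrow$(ii): if $(c^*,\pi^*)\in\Pi^{x,p}_2$ is an open-loop equilibrium pair, Theorem~\ref{theorem: necessary condition} supplies a pair $(Y,Z)$ satisfying (\ref{equilibrium condition}) with $U'_2(X^*+Y)\in L^p_\mathbb{F}(\Omega;C([0,T];\mathbb{R}))$; as in the proof of Lemma~\ref{lemma: variational equality} and Remark~\ref{remark: maximum principle}, the processes $\alpha:=U'_2(X^*+Y)$ and $\beta:=U''_2(X^*+Y)(\pi^*+Z)$ solve the linear BSDE $d\alpha_s=-r_s\alpha_s\,ds+\beta_s\cdot dW_s$, $\alpha_T=U'_2(X^*_T+E)$, with $\beta\in L^p_\mathbb{F}(\Omega;L^2(0,T;\mathbb{R}^d))$, and the equilibrium relations become $U'_1(c^*_s)=\lambda_2(s,T)\alpha_s$ and $\alpha_s\theta^\mathcal{H}_s+\beta^\mathcal{H}_s=0$ for a.e.\ $s$. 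Given an arbitrary competitor $(c,\pi)\in\Pi^{x,p/(p-1)}_3$ with wealth $X:=X^{(c,\pi,0,x)}$ and $\xi:=X-X^*$, concavity of $U_1$ and $U_2$ together with these relations yields
\begin{equation*}
	\mathcal{R}(c,\pi;x)-\mathcal{R}(c^*,\pi^*;x)\le\mathbb{E}\Bigl[\int^T_0\alpha_s(c_s-c^*_s)\,ds+\alpha_T\xi_T\Bigr],
\end{equation*}
and It\^o's formula applied to $\alpha_s\xi_s$, using $\xi_0=0$ and $\alpha_s\theta^\mathcal{H}_s+\beta^\mathcal{H}_s=0$, turns the drift into $-\alpha_s(c_s-c^*_s)$ and hence makes the right-hand side vanish. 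The crux is that the accompanying stochastic integrals $\int^\cdot_0\xi_s\beta_s\cdot dW_s$ and $\int^\cdot_0\alpha_s(\pi_s-\pi^*_s)\cdot dW_s$ are true martingales: this is exactly where the conjugate-exponent structure of the hypothesis is used, since standard SDE estimates put $\xi$ in $L^{p/(p-1)}_\mathbb{F}(\Omega;C([0,T];\mathbb{R}))$ and $\pi-\pi^*$ in $L^{p/(p-1)}_\mathbb{F}(\Omega;L^2(0,T;\mathbb{R}^d))$, so H\"older's and the Burkholder--Davis--Gundy inequalities close the argument. Thus $\mathcal{R}(c,\pi;x)\le\mathcal{R}(c^*,\pi^*;x)$ for every admissible $(c,\pi)$, i.e.\ $(c^*,\pi^*)$ is optimal for Problem~(C)$^{x,p/(p-1)}_{\lambda_2,U_1,U_2}$.

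For the implication (ii)$\Rightarrow$(i) I would exploit the freedom in $\lambda_1$. The function $\lambda^\circ_1(t,s):=\lambda_2(t,T)/\lambda_2(s,T)$ lies in $\Lambda$ (because $\lambda_2$ is continuous, strictly positive and $\lambda_2(t,t)=1$), and for this choice the time-inconsistent reward satisfies $R(c,\pi;t,x_t)=\lambda_2(t,T)\,\mathcal{R}(c,\pi;t,x_t)$. If $(c^*,\pi^*)$ is optimal for Problem~(C)$^{x,p/(p-1)}_{\lambda_2,U_1,U_2}$, then for each $t\in[0,T)$ and $(\kappa,\eta)\in\chi_t$ the perturbed pair $(c^{t,\epsilon},\pi^{t,\epsilon})$ agrees with $(c^*,\pi^*)$ on $[0,t)$ and remains in $\Pi^{x,p/(p-1)}_3$ (a routine check using Lemma~\ref{lemma: perturbation SDE} and the integrability built into (H2)$_{x,p}$), so Lemma~\ref{lemma: time-consistency} gives $\mathcal{R}(c^{t,\epsilon},\pi^{t,\epsilon};t,X^*_t)\le\mathcal{R}(c^*,\pi^*;t,X^*_t)$ a.s.\ for every $\epsilon\in(0,T-t)$; dividing by $\epsilon$, multiplying by $\lambda_2(t,T)>0$ and letting $\epsilon\downarrow0$ shows that $(c^*,\pi^*)$ is an open-loop equilibrium pair for Problem~(I)$^x_{\lambda^\circ_1,\lambda_2,U_1,U_2}$, hence by Corollary~\ref{corollary: lambda_1} for Problem~(I)$^x_{\lambda_1,\lambda_2,U_1,U_2}$.

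For the uniqueness statement, suppose $(c^*,\pi^*)$ and $(\bar c,\bar\pi)$ both belong to $\Pi^{x,p}_2\cap\Pi^{x,p/(p-1)}_3$ and are open-loop equilibrium pairs; by the equivalence just proved, both are optimal for Problem~(C)$^{x,p/(p-1)}_{\lambda_2,U_1,U_2}$. Since $\Pi^{x,p/(p-1)}_3$ is convex and $\mathcal{R}$ is concave in $(c,\pi)$ and strictly concave in $c$, evaluating $\mathcal{R}$ at the midpoint of the two pairs forces $c^*=\bar c$ a.e.; strict concavity of $U_2$ then forces $X^{(c^*,\pi^*,0,x)}_T=X^{(\bar c,\bar\pi,0,x)}_T$ a.s.; and since the difference of the two wealth processes solves a linear SDE with zero initial value and zero terminal value, passing to an equivalent local martingale measure (available because $\theta$ is bounded) yields $\pi^*=\bar\pi$ a.e. I expect the main obstacle to be the integrability bookkeeping in (i)$\Rightarrow$(ii) --- keeping $(\alpha,\beta)$ and $\xi$ in the correct dual $L^p$/$L^{p/(p-1)}$ spaces so that every stochastic integral is a genuine martingale --- together with the verification that the perturbed pairs in (ii)$\Rightarrow$(i) are admissible.
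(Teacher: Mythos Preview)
Your proposal is correct and follows essentially the same route as the paper: for (ii)$\Rightarrow$(i) you introduce the auxiliary discount $\lambda^\circ_1(t,s)=\lambda_2(t,T)/\lambda_2(s,T)$, use Lemma~\ref{lemma: time-consistency}, and then invoke Corollary~\ref{corollary: lambda_1}; for (i)$\Rightarrow$(ii) you apply Theorem~\ref{theorem: necessary condition}, rewrite the equilibrium conditions as $U'_1(c^*_s)=\lambda_2(s,T)\alpha_s$ and $\alpha_s\theta^\mathcal{H}_s+\beta^\mathcal{H}_s=0$, and combine concavity with It\^o's formula for $\alpha\xi$ under the conjugate $L^p$/$L^{p/(p-1)}$ integrability---exactly the paper's argument (your $\beta=U''_2(X^*+Y)(\pi^*+Z)$ coincides with the paper's $\beta=-U'_2(X^*+Y)\theta^\mathcal{H}+U''_2(X^*+Y)Z^\mathcal{O}$ once~(\ref{equilibrium condition}) is substituted). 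Your uniqueness argument via the midpoint and an ELMM is more detailed than the paper's, which simply asserts uniqueness from strict concavity of $U_1,U_2$; your version makes the passage from $X^*_T=\bar X_T$ to $\pi^*=\bar\pi$ explicit and is a welcome clarification.
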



\begin{proof}
Firstly, assume that $(c^*,\pi^*)$ is an optimal pair for Problem~(C)$^{x,p/(p-1)}_{\lambda_2,U_1,U_2}$. Fix arbitrary $t\in[0,T)$, $(\kappa,\eta)\in\chi_t$ and $\epsilon\in(0,T-t)$. We extend the pair $(c^{t,\epsilon},\pi^{t,\epsilon})$ to $[0,T]$ by defining $(c^{t,\epsilon}_s,\pi^{t,\epsilon}_s):=(c^*_s,\pi^*_s)$ for $s\in[0,t)$. Then $X^{t,\epsilon}:=X^{(c^{t,\epsilon},\pi^{t,\epsilon},t,X^*_t)}$ is also extended as $X^{t,\epsilon}_s=X^*_s=X^{(c^{t,\epsilon},\pi^{t,\epsilon},0,x)}_s$ for $s\in[0,t)$. Note that
\begin{align*}
	&|U_1(c^{t,\epsilon}_s)|\leq|U_1(c^*_s)|+U'_1(c^*_s)\,|\kappa|\1_{[t,t+\epsilon)}(s)+\frac{1}{2}M_1(c^*_s;|\kappa|)\,|\kappa|^2\1_{[t,t+\epsilon)}(s),\ s\in[0,T],
	\shortintertext{and}
	&|U_2(X^{t,\epsilon}_T+E)|\leq|U_2(X^*_T+E)|+U'_2(X^*_T+E)\,|\xi^{t,\epsilon}_T|+\frac{1}{2}M_2(X^*_T+E;|\xi^{t,\epsilon}_T|)\,|\xi^{t,\epsilon}_T|^2,
\end{align*}
where $M_1$ and $M_2$ are defined by (\ref{definition of M}), and $\xi^{t,\epsilon}:=X^{t,\epsilon}-X^*$, which satisfies SDE~(\ref{perturbation SDE}). By these inequalities and the assumption that $(c^*,\pi^*)$ is in $\Pi^{x,p}_2$ with $p>1$, we can show that
\begin{equation*}
	\mathbb{E}\left[\int^T_0|U_1(c^{t,\epsilon}_s)|\,ds+|U_2(X^{t,\epsilon}_T+E)|\right]<\infty,
\end{equation*}
proving $(c^{t,\epsilon},\pi^{t,\epsilon})\in\Pi^{x}_0$. Clearly $(c^{t,\epsilon},\pi^{t,\epsilon})\in L^{p/(p-1)}_\mathbb{F}(\Omega;L^1(0,T;\mathbb{R}))\times L^{p/(p-1)}_\mathbb{F}(\Omega;L^2(0,T;\mathbb{R}^d))$ since $(c^*,\pi^*)\in\Pi^{x,p/(p-1)}_3$. Hence, we have that $(c^{t,\epsilon},\pi^{t,\epsilon})\in\Pi^{x,p/(p-1)}_3$. Since $(c^*,\pi^*)$ is an optimal pair for Problem~(C)$^{x,p/(p-1)}_{\lambda_2,U_1,U_2}$, by Lemma~\ref{lemma: time-consistency}, we obtain
\begin{align*}
	0&\geq\mathcal{R}(c^{t,\epsilon},\pi^{t,\epsilon};t,X^*_t)-\mathcal{R}(c^*,\pi^*;t,X^*_t)\\
	&=\mathbb{E}_t\left[\int^T_t\frac{1}{\lambda_2(s,T)}\bigl(U_1(c^{t,\epsilon}_s)-U_1(c^*_s)\bigr)\,ds+\bigl(U_2(X^{t,\epsilon}_T+E)-U_2(X^*_T+E)\bigr)\right]\\
	&=\frac{1}{\lambda_2(t,T)}\mathbb{E}_t\left[\int^T_t\frac{\lambda_2(t,T)}{\lambda_2(s,T)}\bigl(U_1(c^{t,\epsilon}_s)-U_1(c^*_s)\bigr)\,ds+\lambda_2(t,T)\bigl(U_2(X^{t,\epsilon}_T+E)-U_2(X^*_T+E)\bigr)\right]\ \text{a.s.}
\end{align*}
Define a function $\tilde{\lambda}_1:\Delta[0,T]\to\mathbb{R}_+$ by
\begin{equation*}
	\tilde{\lambda}_1(t,s):=\frac{\lambda_2(t,T)}{\lambda_2(s,T)},\ (t,s)\in\Delta[0,T].
\end{equation*}
Clearly $\tilde{\lambda}_1$ is in $\Lambda$. Denote by $\tilde{R}$ the reward functional corresponding to $(\tilde{\lambda}_1,\lambda_2,U_1,U_2)$. Then we get $\tilde{R}(c^{t,\epsilon},\pi^{t,\epsilon};t,X^*_t)-\tilde{R}(c^*,\pi^*;t,X^*_t)\leq0$ a.s., in particular,
\begin{equation*}
	\limsup_{\epsilon\downarrow0}\frac{\tilde{R}(c^{t,\epsilon},\pi^{t,\epsilon};t,X^*_t)-\tilde{R}(c^*,\pi^*;t,X^*_t)}{\epsilon}\leq0\ \text{a.s.},
\end{equation*}
and hence $(c^*,\pi^*)$ is an open-loop equilibrium pair for Problem~(I)$^x_{\tilde{\lambda}_1,\lambda_2,U_1,U_2}$. By Corollary~\ref{corollary: lambda_1}, we see that $(c^*,\pi^*)$ is an open-loop equilibrium pair for Problem~(I)$^x_{\lambda_1,\lambda_2,U_1,U_2}$ too.
\par
Conversely, assume that $(c^*,\pi^*)$ is an open-loop equilibrium pair for  Problem~(I)$^x_{\lambda_1,\lambda_2,U_1,U_2}$. By Theorem~\ref{theorem: necessary condition}, there exists a pair $(Y,Z)$ such that the assertions in that theorem hold true. Define $(\alpha,\beta)$ by
\begin{equation*}
	\alpha:=U'_2(X^*+Y)\ \text{and}\ \beta:=-U'_2(X^*+Y)\theta^\mathcal{H}+U''_2(X^*+Y)Z^\mathcal{O}.
\end{equation*}
Then, by the same arguments as in the proof of Theorem~\ref{theorem: verification}, we can show that
 $(\alpha,\beta)$ is in $L^p_\mathbb{F}(\Omega;C([0,T];\mathbb{R}))\times L^p_\mathbb{F}(\Omega;L^2(0,T;\mathbb{R}^d))$, and it satisfies BSDE~(\ref{duality BSDE}). Now take an arbitrary $(c,\pi)\in\Pi^{x,p/(p-1)}_3$. Define $(\hat{c},\hat{\pi}):=(c-c^*,\pi-\pi^*)$ and $\hat{\xi}:=X^{(c,\pi,0,x)}-X^*$. Then $\hat{\xi}$ satisfies the SDE
\begin{equation*}
	\begin{cases}
		d\hat{\xi}_s=(r_s\hat{\xi}_s+\hat{\pi}_s\cdot\theta^\mathcal{H}_s-\hat{c}_s)\,ds+\hat{\pi}_s\cdot dW^\mathcal{H}_s,\ s\in[0,T],\\
		\hat{\xi}_0=0.
	\end{cases}
\end{equation*}
Since $r$ and $\theta$ are bounded and $(\hat{c},\hat{\pi})\in  L^{p/(p-1)}_\mathbb{F}(\Omega;L^1(0,T;\mathbb{R}))\times L^{p/(p-1)}_\mathbb{F}(\Omega;L^2(0,T;\mathbb{R}^d))$, it can be easily shown that $\hat{\xi}\in L^{p/(p-1)}_\mathbb{F}(\Omega;C([0,T];\mathbb{R}))$; see for example Theorem~3.4.3 in the textbook~\cite{b_Zhang_17}. By the concavity of $U_1$ and $U_2$, we have
\begin{equation}\label{concavity}
	\mathcal{R}(c,\pi;x)-\mathcal{R}(c^*,\pi^*;x)\leq\mathbb{E}\left[\int^T_0\frac{1}{\lambda_2(s,T)}U'_1(c^*_s)\hat{c}_s\,ds+U'_2(X^*_T+E)\hat{\xi}_T\right].
\end{equation}
Since $c^*$ has the representation in (\ref{equilibrium condition}), we get
\begin{equation}\label{c^* equality}
	\frac{1}{\lambda_2(s,T)}U'_1(c^*_s)=U'_2(X^*_s+Y_s)=\alpha_s\ \text{a.s. for a.e.}\,s\in[0,T].
\end{equation}
Moreover, It\^{o}'s formula yields that
\begin{align*}
	&U'_2(X^*_T+E)\hat{\xi}_T=\alpha_T\hat{\xi}_T\\
	&=\int^T_0\alpha_s\bigl(\hat{\pi}_s\cdot dW^\mathcal{H}_s+(r_s\hat{\xi}_s+\hat{\pi}_s\cdot\theta^\mathcal{H}_s-\hat{c}_s)\,ds\bigr)+\int^T_0\hat{\xi}_s\bigl(\beta_s\cdot dW_s-r_s\alpha_s\,ds\bigr)+\int^T_0\hat{\pi}_s\cdot\beta_s\,ds\displaybreak[1]\\
	&=\int^T_0(\alpha_s\theta^\mathcal{H}_s+\beta^\mathcal{H}_s)\cdot\hat{\pi}_s\,ds-\int^T_0\alpha_s\hat{c}_s\,ds+\int^T_0\alpha_s\hat{\pi}_s\cdot dW^\mathcal{H}_s+\int^T_0\hat{\xi}_s\beta_s\cdot dW_s\\
	&=-\int^T_0\alpha_s\hat{c}_s\,ds+\int^T_0\alpha_s\hat{\pi}_s\cdot dW^\mathcal{H}_s+\int^T_0\hat{\xi}_s\beta_s\cdot dW_s,
\end{align*}
where in the last equality we used the relation $\alpha\theta^\mathcal{H}+\beta^\mathcal{H}=0$. Note that, by H\"{o}lder's inequality,
\begin{align*}
	\mathbb{E}\left[\left(\int^T_0|\alpha_s\hat{\pi}_s|^2\,ds\right)^{1/2}\right]&\leq\mathbb{E}\left[\sup_{0\leq s\leq T}|\alpha_s|\left(\int^T_0|\hat{\pi}_s|^2\,ds\right)^{1/2}\right]\\
	&\leq\mathbb{E}\left[\sup_{0\leq s\leq T}|\alpha_s|^p\right]^{1/p}\mathbb{E}\left[\left(\int^T_0|\hat{\pi}_s|^2\,ds\right)^{p/(2(p-1))}\right]^{(p-1)/p}<\infty.
\end{align*}
Similarly we get $\mathbb{E}[(\int^T_0|\hat{\xi}_s\beta_s|^2\,ds)^{1/2}]<\infty$. Therefore, we see that
\begin{equation*}
	\mathbb{E}\left[\int^T_0\alpha_s\hat{\pi}_s\cdot dW^\mathcal{H}_s+\int^T_0\hat{\xi}_s\beta_s\cdot dW_s\right]=0,
\end{equation*}
and hence
\begin{equation}\label{U'_2 equality}
	\mathbb{E}\bigl[U'_2(X^*_T+E)\hat{\xi}_T\bigr]=-\mathbb{E}\left[\int^T_0\alpha_s\hat{c}_s\,ds\right].
\end{equation}
By (\ref{concavity}),\,(\ref{c^* equality}) and (\ref{U'_2 equality}), we obtain $\mathcal{R}(c,\pi;x)\leq\mathcal{R}(c^*,\pi^*;x)$. Hence, $(c^*,\pi^*)$ is an optimal pair for Problem~(C)$^{x,p/(p-1)}_{\lambda_2,U_1,U_2}$.
\par
Lastly, since $U_1$ and $U_2$ are strictly concave, we see that the optimal pair for Problem~(C)$^{x,p/(p-1)}_{\lambda_2,U_1,U_2}$ is, if it exists, unique. Therefore, the last assertion of the theorem holds true. This completes the proof.
\end{proof}

Note that, if $\lambda_1$ and $\lambda_2$ are exponential discount functions, i.e., $\lambda_1(t,s)=\lambda_2(t,s)=e^{-\delta(s-t)}$, $(t,s)\in\Delta[0,T]$, with a constant $\delta\geq0$, then for any $(c,\pi)\in\Pi^x_0$ it holds that
\begin{align*}
	\mathcal{R}(c,\pi;x)&=\mathbb{E}\left[\int^T_0\frac{1}{e^{-\delta(T-s)}}U_1(c_s)\,ds+U_2(X^{(c,\pi)}_T+E)\right]\\
	&=e^{\delta T}\mathbb{E}\left[\int^T_0e^{-\delta s}U_1(c_s)\,ds+e^{-\delta T}U_2(X^{(c,\pi)}_T+E)\right]\displaybreak[1]\\
	&=e^{\delta T}\mathbb{E}\left[\int^T_0\lambda_1(0,s)U_1(c_s)\,ds+\lambda_2(0,T)U_2(X^{(c,\pi)}_T+E)\right]\\
	&=e^{\delta T}R(c,\pi;0,x).
\end{align*}
Similarly, for any $t\in[0,T)$ and any $(c,\pi)\in\Pi^x_0$ satisfying $(c_s,\pi_s)=(c^*_s,\pi^*_s)$, $s\in[0,t)$, for a fixed $(c^*,\pi^*)\in\Pi^x_0$, it holds that
\begin{equation*}
	\mathcal{R}(c,\pi;t,X^*_t)=e^{\delta(T-t)}R(c,\pi;t,X^*_t)\ \text{a.s.}
\end{equation*}
Therefore, we obtain the following corollary, which would be expected (but not trivial from the definition).


\begin{cor}
Let $(U_1,U_2)\in\mathbb{U}\times\mathbb{U}$ be given, and assume that $\lambda_1(t,s)=\lambda_2(t,s)=e^{-\delta(s-t)}$, $(t,s)\in\Delta[0,T]$, for a constant $\delta\geq0$. Fix an initial wealth $x\in\mathbb{R}$ and let $(c^*,\pi^*)\in\Pi^{x,p}_2\cap\Pi^{x,p/(p-1)}_3$ for some $p>1$. Then the following are equivalent:
\begin{enumerate}
\renewcommand{\labelenumi}{(\roman{enumi})}
\item
$(c^*,\pi^*)$ is an open-loop equilibrium pair;
\item
$(c^*,\pi^*)$ is optimal in $\Pi^{x,p/(p-1)}_3$ when viewed at the initial time, i.e., the following holds:
\begin{equation*}
	R(c^*,\pi^*;0,x)=\sup_{(c,\pi)\in\Pi^{x,p/(p-1)}_3}R(c,\pi;0,x).
\end{equation*}
\end{enumerate}
Moreover, if the above conditions hold, then for any $t\in[0,T)$ and any $(c,\pi)\in\Pi^{x,p}_3$ with $(c_s,\pi_s)=(c^*_s,\pi^*_s)$, $s\in[0,t)$, it holds that
\begin{equation*}
	R(c,\pi;t,X^*_t)\leq R(c^*,\pi^*;t,X^*_t)\ \text{a.s.}
\end{equation*}
\end{cor}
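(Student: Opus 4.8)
The plan is to read the corollary off Theorem~\ref{theorem: equivalent problems} and Lemma~\ref{lemma: time-consistency}, using the two identities for exponential discounting displayed immediately above the statement, namely
\[
	\mathcal{R}(c,\pi;x)=e^{\delta T}R(c,\pi;0,x)\quad\text{and}\quad\mathcal{R}(c,\pi;t,X^*_t)=e^{\delta(T-t)}R(c,\pi;t,X^*_t)\ \text{a.s.},
\]
the second one being valid for pairs $(c,\pi)$ that agree with $(c^*,\pi^*)$ on $[0,t)$. These identities say that, when $\lambda_1(t,s)=\lambda_2(t,s)=e^{-\delta(s-t)}$, maximizing $\mathcal{R}(\cdot\,;x)$ and maximizing $R(\cdot\,;0,x)$ over the same admissible class are literally the same optimization problem up to the positive constant $e^{\delta T}$, and similarly at each intermediate time.

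First I would prove the equivalence of (i) and (ii). Since $(c^*,\pi^*)\in\Pi^{x,p}_2\cap\Pi^{x,p/(p-1)}_3$, Theorem~\ref{theorem: equivalent problems} gives that (i), i.e.\ that $(c^*,\pi^*)$ is an open-loop equilibrium pair, is equivalent to $(c^*,\pi^*)$ being an optimal pair for Problem~(C)$^{x,p/(p-1)}_{\lambda_2,U_1,U_2}$, that is, $\mathcal{R}(c^*,\pi^*;x)=\sup_{(c,\pi)\in\Pi^{x,p/(p-1)}_3}\mathcal{R}(c,\pi;x)$. Dividing through by $e^{\delta T}>0$ and applying the first identity above to both sides turns this into $R(c^*,\pi^*;0,x)=\sup_{(c,\pi)\in\Pi^{x,p/(p-1)}_3}R(c,\pi;0,x)$, which is exactly (ii); the reverse implication is the same computation read backwards.

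It remains to prove the final (time-consistency) assertion. If (i)--(ii) hold, then $(c^*,\pi^*)$ is an optimal pair for Problem~(C)$^{x,p/(p-1)}_{\lambda_2,U_1,U_2}$, so Lemma~\ref{lemma: time-consistency} (with the exponent $p/(p-1)$) yields, for every $t\in[0,T)$ and every competitor $(c,\pi)$ in the corresponding admissible class agreeing with $(c^*,\pi^*)$ on $[0,t)$, the inequality $\mathcal{R}(c,\pi;t,X^*_t)\leq\mathcal{R}(c^*,\pi^*;t,X^*_t)$ a.s.; dividing by $e^{\delta(T-t)}>0$ and using the second identity above gives $R(c,\pi;t,X^*_t)\leq R(c^*,\pi^*;t,X^*_t)$ a.s., which is the claim. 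There is no real analytic obstacle here: both ingredients are already established, and the only thing to watch is the bookkeeping of integrability exponents when one passes between the admissible classes $\Pi^{x,p}_3$ and $\Pi^{x,p/(p-1)}_3$ via the inclusion of $L^p$-classes on a finite measure space.
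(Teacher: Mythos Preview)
Your proposal is correct and mirrors the paper's own (implicit) argument: the corollary is not given a separate proof there, but is declared to follow from the two identities $\mathcal{R}(c,\pi;x)=e^{\delta T}R(c,\pi;0,x)$ and $\mathcal{R}(c,\pi;t,X^*_t)=e^{\delta(T-t)}R(c,\pi;t,X^*_t)$ combined with Theorem~\ref{theorem: equivalent problems} and Lemma~\ref{lemma: time-consistency}, exactly as you outline. One small remark on your final caveat: the inclusion $\Pi^{x,p}_3\subset\Pi^{x,p/(p-1)}_3$ on a probability space holds precisely when $p\ge 2$, so for $1<p<2$ the last assertion as literally stated (with competitors in $\Pi^{x,p}_3$) is not directly covered by Lemma~\ref{lemma: time-consistency} applied at exponent $p/(p-1)$; this appears to be a typo in the paper for $\Pi^{x,p/(p-1)}_3$ rather than a gap in your reasoning.
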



\section*{Acknowledgments}
The author would like to thank Professor Jiongmin Yong for his help and relevant discussions as well as his hospitality during the author's stay in Florida. The author would like to thank the editor and the referees for their constructive comments and suggestions. This work was supported by JSPS KAKENHI Grant Number JP18J20973.


\bibliography{reference}


\appendix
\section{Appendix}\label{appendix}

\paragraph{}
\ \,In this appendix, we provide proofs of Lemmas~\ref{lemma: perturbation SDE}, \ref{lemma: utility functions}, and \ref{lemma: time-consistency}.


\begin{proof}[Proof of Lemma~\ref{lemma: perturbation SDE}]
The assertion (\rnum{1}) is well-known; see for example Theorem~3.4.3 in \cite{b_Zhang_17}. We prove the assertion~(\rnum{2}). Let $\epsilon\in(0,T-t)$ and a set $A\in\mathcal{F}_t$ be fixed. Then $\xi^{t,\epsilon}\1_A$ satisfies the following SDE:
\begin{equation*}
	\begin{cases}
		d(\xi^{t,\epsilon}_s\1_A)=\eta\1_{[t,t+\epsilon)}(s)\1_A\cdot dW^\mathcal{H}_s+\bigl(r_s\xi^{t,\epsilon}_s\1_A+(\eta\cdot\theta^\mathcal{H}_s-\kappa)\1_{[t,t+\epsilon)}(s)\1_A\bigr)\,ds,\ s\in[t,T],\\
		\xi^{t,\epsilon}_t\1_A=0.
	\end{cases}
\end{equation*}
Again by Theorem 3.4.3 in \cite{b_Zhang_17}, for any $\gamma\geq1$, there exists a constant $C=C(\gamma,T,\|r\|_\infty)>0$ such that
\begin{align*}
	\mathbb{E}\Bigl[\sup_{t\leq s\leq T}|\xi^{t,\epsilon}_s|^{2\gamma}\1_A\Bigr]&\leq C\mathbb{E}\Bigl[\Bigl(\int^T_t|\eta\cdot\theta^\mathcal{H}_s-\kappa|\1_{[t,t+\epsilon)}(s)\1_A\,ds\Bigr)^{2\gamma}+\Bigl(\int^T_t|\eta|^2\1_{[t,t+\epsilon)}(s)\1_A\,ds\Bigr)^\gamma\Bigr]\\
	&\leq C\mathbb{E}\Bigl[\Bigl(\bigl(\|\theta^\mathcal{H}\|_\infty|\eta|+|\kappa|\bigr)^{2\gamma}\epsilon^{2\gamma}+|\eta|^{2\gamma}\epsilon^\gamma\Bigr)\1_A\Bigr]\\
	&\leq\mathbb{E}\Bigl[\bigl(C_\gamma\epsilon(|\kappa|^2+|\eta|^2)\bigr)^\gamma\1_A\Bigr],
\end{align*}
where $C_\gamma>0$ is a constant which depends only on $\gamma,\,T,\,\|r\|_\infty$ and $\|\theta^\mathcal{H}\|_\infty$. Since $A\in\mathcal{F}_t$ is arbitrary, we obtain the assertion (\rnum{2}). Next, we prove the assertion~(\rnum{3}). Recall the dynamics~\eqref{S^0 dynamics} of $S^0$. It can be easily shown that $S^0_s=\exp(\int^s_0r_u\,du)$, and hence both $S^0$ and $\frac{1}{S^0}$ are uniformly bounded. By It\^{o}'s formula, we see that the process $\bigl(\frac{\xi^{t,\epsilon}_s}{S^0_s}\bigr)_{s\in[t,T]}$ satisfies
\begin{equation*}
	\begin{cases}
		d\Bigl(\frac{\xi^{t,\epsilon}_s}{S^0_s}\Bigr)=\frac{1}{S^0_s}\eta\1_{[t,t+\epsilon)}(s)\cdot dW^\mathcal{H}_s+\frac{1}{S^0_s}(\eta\cdot\theta^\mathcal{H}_s-\kappa)\1_{[t,t+\epsilon)}(s)\,ds,\ s\in[t,T],\\
		\frac{\xi^{t,\epsilon}_t}{S^0_t}=0.
	\end{cases}
\end{equation*}
Thus we get
\begin{equation*}
	\xi^{t,\epsilon}_T=S^0_T\int^{t+\epsilon}_t\frac{1}{S^0_s}\eta\cdot dW^\mathcal{H}_s+S^0_T\int^{t+\epsilon}_t\frac{1}{S^0_s}(\eta\cdot\theta^\mathcal{H}_s-\kappa)\,ds.
\end{equation*}
Therefore, it suffices to prove that $\sup_{\epsilon\in(0,T-t)}\mathbb{E}\bigl[\exp\bigl(c\bigl|\int^{t+\epsilon}_t\frac{1}{S^0_s}\eta\cdot dW^\mathcal{H}_s\bigr|\bigr)\bigr]<\infty$. More generally, for any $\mathbb{R}^d$-valued predictable and bounded process $\varphi$, it holds that
\begin{equation}\label{exp phi estimate}
	\mathbb{E}\Bigl[\exp\Bigl(c\Bigl|\int^{t+\epsilon}_t\varphi_s\cdot dW_s\Bigr|\Bigr)\Bigr]\leq2\exp\Bigl(\frac{c^2}{2}\|\varphi\|^2_\infty\epsilon\Bigr).
\end{equation}
Indeed, we have
\begin{align*}
	&\mathbb{E}\Bigl[\exp\Bigl(c\Bigl|\int^{t+\epsilon}_t\varphi_s\cdot dW_s\Bigr|\Bigr)\Bigr]\\
	&\leq\mathbb{E}\Bigl[\exp\Bigl(c\int^{t+\epsilon}_t\varphi_s\cdot dW_s\Bigr)\Bigr]+\mathbb{E}\Bigl[\exp\Bigl(-c\int^{t+\epsilon}_t\varphi_s\cdot dW_s\Bigr)\Bigr]\displaybreak[1]\\
	&\leq\exp\Bigl(\frac{c^2}{2}\|\varphi\|^2_\infty\epsilon\Bigr)\Bigl\{\mathbb{E}\Bigl[\exp\Bigl(c\int^{t+\epsilon}_t\varphi_s\cdot dW_s-\frac{c^2}{2}\int^{t+\epsilon}_t|\varphi_s|^2\,ds\Bigr)\Bigr]\\
	&\hspace{4cm}+\mathbb{E}\Bigl[\exp\Bigl(-c\int^{t+\epsilon}_t\varphi_s\cdot dW_s-\frac{c^2}{2}\int^{t+\epsilon}_t|\varphi_s|^2\,ds\Bigr)\Bigr]\Bigr\}.
\end{align*}
Since the last two expectations are equal to $1$, we obtain the estimate~\eqref{exp phi estimate}. This completes the proof of the assertion~(\rnum{3}).
\end{proof}


\begin{proof}[Proof of Lemma~\ref{lemma: utility functions}]
Note that, for $i=1,2$, $U'_i$ is positive and decreasing. Hence, for any $x\in\mathbb{R}$ and $\delta\geq0$, it holds that
\begin{equation*}
	M_i(x;\delta):=\max_{y\in\mathbb{R},\,|y|\leq\delta}|U''_i(x+y)|=\max_{y\in\mathbb{R},\,|y|\leq\delta}\left|\frac{U''_i(x+y)}{U'_i(x+y)}U'_i(x+y)\right|\leq\left\|\frac{U''_i}{U'_i}\right\|_\infty U'_i(x-\delta).
\end{equation*}
Moreover, the assumption (\rnum{2}) of this lemma yields that $U'_i(x-\delta)\leq\exp(K\delta)U'_i(x)$, and hence we obtain $M_i(x,\delta)\leq\|\frac{U''_i}{U'_i}\|_\infty\exp(K\delta)U'_i(x)$. Let $(c,\pi)\in\Pi^{x,p}_1$ with $x\in\mathbb{R}$ and $p>1$ be given. Then, for any $\delta\geq0$, it holds that
\begin{equation*}
	\mathbb{E}\left[\int^T_0M_1(c_s;\delta)^p\,ds\right]\leq\left\|\frac{U''_1}{U'_1}\right\|^p_\infty\exp(pK\delta)\mathbb{E}\left[\int^T_0U'_1(c_s)^p\,ds\right]<\infty.
\end{equation*}
Therefore, the same is true for any $q\leq p$. Furthermore, H\"{o}lder's inequality yields that, for any $q\in(1,p)$,
\begin{align*}
	&\sup_{\epsilon\in(0,T-t)}\mathbb{E}[M_2(X^{(c,\pi)}_T+E;|\xi^{t,\epsilon}_T|)^q]\\
	&\leq\left\|\frac{U''_2}{U'_2}\right\|^q_\infty\sup_{\epsilon\in(0,T-t)}\mathbb{E}\left[\exp(qK|\xi^{t,\epsilon}_T|)U'_2(X^{(c,\pi)}_T+E)^q\right]\\
	&\leq\left\|\frac{U''_2}{U'_2}\right\|^q_\infty\sup_{\epsilon\in(0,T-t)}\mathbb{E}\left[\exp\left(\frac{pq}{p-q}K|\xi^{t,\epsilon}_T|\right)\right]^{(p-q)/p}\mathbb{E}\left[U'_2(X^{(c,\pi)}_T+E)^p\right]^{q/p}\\
	&<\infty,
\end{align*}
where we used Lemma~\ref{lemma: perturbation SDE}~(\rnum{3}) in the last estimate. This implies that the family of random variables $\{M_2(X^{(c,\pi)}_T+E;|\xi^{t,\epsilon}_T|)^q\}_{\epsilon\in(0,T-t)}$ is uniformly integrable for any $q\in(1,p)$. Hence $(c,\pi)\in\Pi^{x,p}_2$ and this completes the proof.
\end{proof}


\begin{proof}[Proof of Lemma~\ref{lemma: time-consistency}]
Fix arbitrary $t\in[0,T)$ and $(c,\pi)\in\Pi^{x,p}_3$ satisfying $(c_s,\pi_s)=(c^*_s,\pi^*_s)$ for $s\in[0,t)$. Let $A\in\mathcal{F}_t$ be an arbitrary set and consider the pair $(c^{t,A},\pi^{t,A})\in\Pi^{x,p}_3$ defined by
\begin{equation*}
	c^{t,A}_s:=
	\begin{cases}
		c^*_s\ &\text{for}\ s\in[0,t),\\
		\1_Ac_s+\1_{A^\text{c}}c^*_s\ &\text{for}\ s\in[t,T],
	\end{cases}\ \ 
	\pi^{t,A}_s:=
	\begin{cases}
		\pi^*_s\ &\text{for}\ s\in[0,t),\\
		\1_A\pi_s+\1_{A^\text{c}}\pi^*_s\ &\text{for}\ s\in[t,T].
	\end{cases}
\end{equation*}
Denote the corresponding wealth process $X^{(c^{t,A},\pi^{t,A},0,x)}$ by $X^{t,A}$. Then clearly
\begin{equation*}
	X^{t,A}_s=
	\begin{cases}
		X^*_s\ &\text{for}\ s\in[0,t),\\
		\1_AX^{(c,\pi,t,X^*_t)}_s+\1_{A^\text{c}}X^*_s\ &\text{for}\ s\in[t,T].
	\end{cases}
\end{equation*}
Since $(c^*,\pi^*)$ is an optimal pair for Problem~(C)$^{x,p}_{\lambda_2,U_1,U_2}$, we get
\begin{align*}
	0&\geq\mathcal{R}(c^{t,A},\pi^{t,A};x)-\mathcal{R}(c^*,\pi^*;x)\\
	&=\mathbb{E}\left[\int^T_0\frac{1}{\lambda_2(s,T)}\bigl(U_1(c^{t,A}_s)-U_1(c^*_s)\bigr)\,ds+\bigl(U_2(X^{t,A}_T+E)-U_2(X^*_T+E)\bigr)\right]\\
	&=\mathbb{E}\left[\left(\int^T_t\frac{1}{\lambda_2(s,T)}\bigl(U_1(c_s)-U_1(c^*_s)\bigr)\,ds+\bigl(U_2(X^{(c,\pi,t,X^*_t)}_T+E)-U_2(X^*_T+E)\bigr)\right)\1_A\right]\\
	&=\mathbb{E}\left[\bigl(\mathcal{R}(c,\pi;t,X^*_t)-\mathcal{R}(c^*,\pi^*;t,X^*_t)\bigr)\1_A\right].
\end{align*}
Since $A\in\mathcal{F}_t$ is arbitrary, we obtain the result.
\end{proof}


\end{document}